\title[]{Ground states for aggregation-diffusion models on Cartan-Hadamard manifolds}
\author[Fetecau]{Razvan C. Fetecau}
\address[Razvan C. Fetecau]{\newline Department of Mathematics, Simon Fraser University, 8888 University Dr., Burnaby, BC V5A 1S6, Canada}
\email{van@math.sfu.ca}
\author[Park]{Hansol Park}
\address[Hansol Park]{\newline Department of Mathematics, Simon Fraser University, 8888 University Dr., Burnaby, BC V5A 1S6, Canada}
\email{hansol\_park@sfu.ca}
\newtheorem{theorem}{Theorem}[section]
\newtheorem{lemma}{Lemma}[section]
\newtheorem{corollary}{Corollary}[section]
\newtheorem{proposition}{Proposition}[section]
\newtheorem{remark}{Remark}[section]
\newcommand{\bbr}{\mathbb R}
\newcommand{\bbs}{\mathbb S}
\newcommand{\bbh}{\mathbb H}
\newcommand{\calA}{\mathcal{A}}
\newcommand{\calC}{\mathcal{C}}
\newcommand{\calP}{\mathcal{P}}
\newcommand{\calR}{\mathcal{R}}
\newcommand{\calK}{\mathcal{K}}
\newcommand{\calW}{\mathcal{W}}
\newcommand{\calJ}{\mathcal{J}}
\def\d{\mathrm{d}}
\newcommand{\dm}{n} % dimension
\newcommand{\p}{o}% pole of M
\newcommand{\h}{h}%potential defined by the intrinsic distance square
\newcommand{\dist}{d}%intrinsic distance
\newcommand{\secc}{\calK}%sectional curvature
\newcommand{\hc}{\phi} %lower bound of h
\begin{document}

\date{\today}

\subjclass[2020]{35A15, 35B38, 58J60, 58J90} 
\keywords{free energies, Cartan-Hadamard manifolds, global minimizers, diffusion on manifolds, intrinsic interactions, homogeneous manifolds}

%\thanks{\textbf{Acknowledgment.} The work of H. Park was supported by}

\begin{abstract}
We consider a free energy functional on Cartan-Hadamard manifolds, and investigate the existence of its global minimizers. The energy functional consists of two components: an entropy (or internal energy) and an interaction energy modelled by an attractive potential. The two components have competing effects, as they favour spreading by linear diffusion and blow-up by nonlocal attractive interactions, respectively. We find necessary and sufficient conditions for existence of ground states for manifolds with sectional curvatures bounded above and below, respectively. In particular, for general Cartan-Hadamard manifolds, {\em superlinear} growth at infinity of the attractive potential prevents the spreading. The behaviour can be relaxed for homogeneous manifolds, for which only {\em linear} growth of the potential is sufficient for this purpose. 
\end{abstract}

\maketitle \centerline{\date}
%\tableofcontents
\section{Introduction}
\label{sect:intro}
Consider a general Cartan-Hadamard manifold of finite dimension, an interaction potential $W: M \times M \to \bbr$, and denote by $\calP(M)$ the space of probability measures on $M$. Define the free energy functional $E: \calP(M) \to [-\infty,\infty]$ by
\begin{equation}
\label{eqn:energy-s}
E[\rho]=\int_M \rho(x)\log\rho(x)\d x+\frac{1}{2}\iint_{M \times M} W(x, y)\rho(x)\rho(y)\d x \d y,
\end{equation}
if $\rho$ is absolutely continuous with respect to the Riemannian volume measure $\d x$. Otherwise, $E[\rho]$ is defined by means of sequences $E[\rho_k]$, with $\rho_k$ absolutely continuous and $\rho_k \rightharpoonup \rho$ as $k \to \infty$ (see \eqref{eqn:energy} for the precise definition).
%note that the integration is with respect to the Riemannian volume measure $\d x$.

The energy $E$ relates to the following nonlinear evolution equation for a density $\rho$ on $M$:
\begin{equation}
\label{eqn:model}
\partial_t\rho(x)- \nabla_M \cdot(\rho(x)\nabla_M W*\rho(x))=\Delta \rho(x),
\end{equation}
where
\[
W*\rho(x)=\int_M W(x, y)\rho(y) \d y,
\]
and $\nabla_M \cdot$ and $\nabla_M $ represent the manifold divergence and gradient, respectively. Specifically, critical points of the energy $E$ are steady states of \eqref{eqn:model}, and also, equation \eqref{eqn:model} can be formally expressed as the gradient flow of the energy $E$ on a suitable Wasserstein space \cite{AGS2005}.

Free energy functionals and their associated evolution equations (known as {\em aggregation-diffusion equations}) have been extensively studied in the Euclidean case ($M=\bbr^\dm$). We refer to the excellent recent review \cite{CarrilloCraigYao2019}, which discusses a variety of issues for this class of equations, such as the well-posedness of solutions, existence/uniqueness of steady states, long-time behaviour of solutions and the metastability phenomenon. To mention only a few of these recent results, in \cite{CaDePa2019} the authors study the existence/non-existence of ground states of the free energy \eqref{eqn:energy} set up on $\bbr^\dm$, while in \cite{CaHiVoYa2019} it is shown that the stationary solutions of \eqref{eqn:model} are radially decreasing up to translation. The existence of minimizers for domains with boundaries was investigated in \cite{MeFe2020}. The long-time behaviour of solutions, in particular their equilibration toward the heat kernel in the diffusion dominated regime, was studied in \cite{CaGoYaZe}. The diffusive regularization and in particular, the metastability behaviour, was investigated in \cite{EvKo16}.  In the meanwhile, minimizers of the free energy in the absence of diffusion have been a separate subject of recent interest \cite{Balague_etalARMA, CaCaPa2015, ChFeTo2015, SiSlTo2015}.

In terms of applications, model \eqref{eqn:model} has been used in swarming and self-organized behaviour in biology \cite{CarrilloVecil2010, M&K, Morale:Capasso}, material science \cite{CaMcVi2006}, robotics \cite{JiEgerstedt2007}, and social sciences \cite{MotschTadmor2014}.  In such applications the potential $W$ in \eqref{eqn:model} can model interactions between biological organisms (e.g., insects, birds or cells), robots or even opinions. The manifold framework considered in this paper is particularly relevant to engineering applications, where agents/robots are typically restricted by environment or mobility constraints to remain on a certain manifold. Applications often use the discrete formulation of model \eqref{eqn:model}. Indeed, the continuum model \eqref{eqn:model} can be regarded as the mean-field limit of a coupled ODE system governing the evolution of $N$ interacting particles which undergo independent Brownian motions. The topic has extensive associated literature, we refer to \cite{Jabin2014} for a review of results in the Euclidean space. We also note that when $W$ is the attractive Newtonian potential in $\bbr^2$, \eqref{eqn:model} is the Patlak-Keller-Segel model of chemotaxis in mathematical biology. The model has an extensive literature on its own, and we refer again to \cite{CarrilloCraigYao2019} for a comprehensive literature review. Finally,  in two and three dimensions and $W$ being the Newtonian potential, \eqref{eqn:model} reduces to the Smoluchowski-Poisson equation in gravitational physics \cite{ChLaLe2004}. 

Alternatively, instead of linear diffusion one can consider nonlinear diffusion, where the entropy term in \eqref{eqn:energy-s} is replaced by $ \frac{1}{m-1} \int_M \rho^m(x)\d x$. For the evolution equation \eqref{eqn:model}, this results in the nonlinear diffusion term $\Delta \rho^m$. There has been extensive extensive work on aggregation models with repulsive/anticrowding effects modelled by nonlinear diffusion; e.g., see \cite{TBL} for an application in mathematical biology, and \cite{Bedrossian11,BurgerDiFrancescoFranek,BuFeHu14,CalvezCarrilloHoffmann2017,CaDePa2019,CaHoMaVo2018,CaHiVoYa2019,DelgadinoXukaiYao2022,Kaib17} for various results on the existence, uniqueness, and qualitative properties of the equilibria. We note that the two modes of diffusion (linear vs. nonlinear) result in different features of equilibria/minimizers of the associated energy. In particular, nonlinear diffusion models admit compactly supported equilibria,  in contrast with equilibria for linear diffusion which have full support \cite{CaDePa2019}.

In this paper we investigate the existence and non-existence of global minimizers of the energy functional $E[\rho]$ set up on general Cartan-Hadamard manifolds. In spite of the extensive research on the free energy \eqref{eqn:energy-s} in the Euclidean space $\bbr^\dm$, there is much less literature available for the manifold setup. In the absence of the interaction term, the energy functional \eqref{eqn:energy-s} was studied on general Riemannian manifolds \cite{CoMcSc2001, Sturm2005, RenesseSturm2005}, with the main focus being to characterize the geodesic convexity of the entropy functional in terms of the curvature of the manifold. On the other hand, there has been considerable interest lately on the interaction equation \eqref{eqn:model} without diffusion posed on Riemannian manifolds. We refer to \cite{CarrilloSlepcevWu2016,  FePa2021, FePaPa2020, PaSl2021, WuSlepcev2015} for well-posedness results, to \cite{FePa2023a} for a variational study on the hyperbolic space, and to \cite{FeZh2019, FePa2023b, HaChCh2014,  HaKaKi2022, HaKoRy2018} for the long-time behaviour of solutions. Rich pattern formation behaviours have been demonstrated numerically on various Riemannian manifolds such as the hyperbolic space \cite{FeZh2019, Ha-hyperboloid}, the special orthogonal group $SO(3)$ \cite{FeHaPa2021}, and Stiefel manifolds \cite{HaKaKi2022}.

To the best of our knowledge, the present work is the first that addresses the existence of ground states of free energies \eqref{eqn:energy-s} on general manifolds. Note that the energy \eqref{eqn:energy-s} has two components: the entropy (or internal energy) and the interaction energy modelled by the potential $W$. In the dynamic model \eqref{eqn:model}, the two parts result in the linear diffusion and the nonlocal transport term, respectively.  In the present work we are exclusively interested in purely {\em attractive} interaction potentials $W$ that are in the form $W(x,y) = h(\dist(x,y))$, for some non-decreasing function $h$, where $\dist$ denotes the geodesic distance on $M$. In such case there are competing effects in the energy from the local diffusion and the nonlocal attractive interactions. Nonexistence of energy minimizers can occur in two scenarios: i) spreading and ii) blow-up. The first happens when the nonlocal attraction is too weak at large distances and cannot prevent the spreading led by diffusion. The second scenario occurs when the attraction is too strong at short distances (singular at origin) and overcomes diffusion, resulting in blow-up. Consequently, the existence of ground states for the energy $E$ relies on a delicate balance of these two competing effects.
%for a global minimizer of $E[\rho]$ to exist, diffusion and attraction need to balance each other. 
 
The main question that this paper addresses is the following. Let $M$ be a Cartan-Hadamard manifold, whose sectional curvatures $\calK$ are either bounded below or above, respectively, by two negative constants $-c_m$ and $-c_M$.  The question that we pose and answer is: {\em What is the behaviour of the attractive potential $W$, at infinity and at zero, that yields existence of global minimizers of the free energy?}. In the Euclidean space $M=\bbr^\dm$ this question was addressed in the work of Carrillo {\em et al.} \cite{CaDePa2019}. As shown in \cite[Theorem 6.1]{CaDePa2019} (see also \cite[Remark 6.2]{CaDePa2019}), a sharp condition for the existence of energy minimizers is that the attractive potential $W$ grows at least logarithmically at infinity (with a coefficient that depends on the dimension) to prevent the diffusion, and can have at most a logarithmic singularity at origin to avert the blow-up.

As a necessary condition, for manifolds with sectional curvatures bounded above by a negative constant, we show that unless $W$ grows at least linearly at infinity, nonexistence by spreading occurs. Regarding sufficient conditions for existence of ground states, we have two results. For general Cartan-Hadamard manifolds with sectional curvatures bounded below, we find that it is sufficient that $W$ grows at least {\em superlinearly} at infinity to prevent the spreading. For homogeneous manifolds however, we can refine this result and show that spreading is contained provided $W$ grows at least {\em linearly} at infinity,  with a coefficient that depends on the dimension and on the lower bound of the curvatures. This shows that the minimal linear growth we found for nonexistence is {\em sharp}. On the other hand, the singular behaviour at origin of the attractive potential, that is necessary and sufficient to avert the blow-up, is the same as in the Euclidean case (i.e., not worse than logarithmic). Indeed, curvature is not expected to play a significant role for the blow-up scenario, as in that case only the local behaviour (near origin) of the attraction matters.

The main tool we use to establish our results is a new logarithmic Hardy-Littlewood-Sobolev (HLS) inequality on Cartan-Hadamard manifolds which has an interest in its own. We derive this inequality from the logarithmic HLS inequality on $\bbr^\dm$ \cite{CaDePa2019, CarlenLoss1992} applied to the tangent space to $M$ at a fixed pole, along with the Rauch comparison theorem. Compared to the inequality on $\bbr^\dm$, the logarithmic HLS inequality on Cartan-Hadamard manifolds includes an additional term that contains the Jacobian of the exponential map at the pole. Since this Jacobian determines the volume growth of geodesic balls centred at the pole, we then use the bounds on sectional curvatures and volume comparison theorems in differential geometry to control the additional term in the HLS inequality on manifolds.

The paper is structured as follows. In Section \ref{sect:prelim}, we present some preliminaries, the assumptions and the main results. Section \ref{sec:diff-geom} contains some comparison results in differential geometry that are essential for our analysis. In Section \ref{sec:non-existence} we investigate conditions on the interaction potential for which ground states cannot exist (either by spreading or by blow-up); in particular, we provide the proof of Theorem \ref{thm:nonexist}.  Section \ref{sec:HLS} presents the logarithmic HLS inequality on Cartan-Hadamard manifolds.  In Section \ref{sec:existence-gen} we consider the case of general manifolds with sectional curvatures bounded from below, and prove Theorem \ref{thm:exist-gen} that establishes the existence of global minimizers. Section \ref{sec:existence}, which deals with existence of ground states in the case of homogeneous manifolds, contains the proof of Theorem \ref{them:existence}. Finally, the Appendix includes proofs of several results listed in the main body of the paper. 

%%%%%%%%%%

\section{Assumptions and main results}
\label{sect:prelim}
\subsection{Assumptions, definitions and setup}
\label{subsect:notations}
The following assumptions on the manifold $M$ and interaction potential $W$ will be made throughout the entire paper.  
\smallskip

\noindent(\textbf{M}) $M$ is an $\dm$-dimensional Cartan-Hadamard manifold, i.e., $M$ is complete, simply connected, and has everywhere non-positive sectional curvature. We denote its intrinsic distance by $\dist$ and sectional curvature by $\calK$. In particular, for $x$ a point in $M$ and $\sigma$ a two-dimensional subspace of $T_x M$, $\calK(x;\sigma)$ denotes the sectional curvature of $\sigma$ at $x$.
\smallskip

\noindent(\textbf{W}) The interaction potential $W:M\times M\to\bbr$ has the form
\[
W(x, y)=\h(\dist(x, y)),\qquad \text{ for all }x, y\in M,
\]
where $\h:[0, \infty)\to [-\infty, \infty) $ is lower semi-continuous and non-decreasing. In particular, $h$ being non-decreasing indicates that the potential $W$ is purely {\em attractive}. Note that $h$ can have a singularity at origin (infinite attraction).

We also denote by $\d x$ the Riemannian volume measure on $M$ and by $\calP_{ac}(M) \subset \calP(M)$ the space of probability measures on $M$ that are absolutely continuous with respect to $\d x$. Throughout the paper, we will refer to an absolutely continuous measure directly by its density $\rho$, and by abuse of notation write $\rho \in \calP_{ac}(M)$ to mean $\d \rho (x) = \rho(x) \d x \in \calP_{ac}(M)$.
\smallskip

{\em Definition of the energy.} Let us make precise the definition of the energy introduced in \eqref{eqn:energy-s}. The energy functional 
$E: \calP(M) \to [-\infty, \infty]$ considered in this paper is given by
\begin{equation}
\label{eqn:energy}
E[\rho]=\begin{cases}
\displaystyle \int_M \rho(x)\log\rho(x) \d x+\frac{1}{2}\iint_{M \times M}  h(\dist(x, y))\rho(x)\rho(y) \d x\d y, & \text{ if  }\rho\in \mathcal{P}_{ac}(M),\\[2pt]
\displaystyle\inf_{\substack{\rho_k \in \calP_{ac}(M) \\ \rho_k \rightharpoonup \rho}} \liminf_{k\to \infty} E[\rho_k], & \text{ otherwise}.
\end{cases}
\end{equation}
Here, $\rho_k \rightharpoonup \rho$ denotes weak convergence as measures, i.e.,
\[
\int_M f(x) \rho_k(x) \d x \to \int_M f(x) \rho(x) \d x, \qquad \text{ as } k\to  \infty,
\]
for all bounded and continuous functions $f:M\to \bbr$.
\smallskip

{\em Invariance to isometries.} Both the entropy and the interaction energy are invariant under isometries of $M$. Indeed, assume $i: M \to M$ is an isometry on $M$, and consider a density $\rho\in \mathcal{P}_{ac}(M)$, and its pushforward (as measures) $i_\#\rho\in \mathcal{P}_{ac}(M)$ by $i$. Since an isometry is volume preserving, it holds that
\[
\rho(x) = i_\#\rho(i(x)),  \qquad \text{ for all } x\in M.
\]
Then, by the change of variable formula we have
\begin{align}
\int_M i_\#\rho(x)\log(i_\#\rho(x))\d x& = \int_M \rho(x)\log(i_\#\rho(i(x))) \d x \\
&= \int_M \rho(x)\log(\rho(x)) \d x,
\end{align}
which shows the invariance under isometries of the entropy. 

Regarding the interaction energy, by the change of variable formula and using that $i$ is distance preserving, we find
\begin{align*}
\iint_{M \times M}  h(\dist(x, y)) i_\# \rho(x) i_\# \rho(y) \d x\d y &= \iint_{M \times M}  h(\dist(i(x), i(y))) \rho(x) \rho(y) \d x\d y \\
& = \iint_{M \times M}  h(\dist(x,y)) \rho(x) \rho(y) \d x\d y.
\end{align*}
Since both of its components are invariant to isometries, so is the energy functional itself. Using the definition \eqref{eqn:energy} of the energy, one can also show this property for general densities $\rho \in \calP(M)$. Hence, for any isometry $i: M \to M$, it holds that
\begin{equation}
\label{eqn:inv-E}
E[i_\# \rho] = E[\rho], \qquad \text{ for any } \rho\in \calP(M).
\end{equation}

{\em Wasserstein spaces.} Denote by $\calP_1(M)$ the set of probability measures on $M$ with finite first moment, i.e.,
\[
\calP_1(M) = \left \{\mu \in \calP(M) : \int_M \dist(x,x_0) \d \mu(x) < \infty \right \},
\]
for some fixed (but arbitrary) point $x_0 \in M$. For $\rho,\sigma \in \calP(M)$, the \emph{intrinsic} $1$-Wasserstein distance is defined as
\begin{equation*}
	\calW_1(\rho,\sigma) = \inf_{\gamma \in \Pi(\rho,\sigma)} \iint_{M \times M} \dist(x,y) \d\gamma(x,y),
\end{equation*}
where $\Pi(\rho,\sigma) \subset \calP(M\times M)$ is the set of transport plans between $\rho$ and $\sigma$, i.e., the set of elements in $\calP(M\times M)$ with first and second marginals $\rho$ and $\sigma$, respectively. The space $(\calP_1(M),\calW_1)$ is a metric space. 
\smallskip

{\em Admissible densities.} To investigate the existence of energy minimizers, we fix the Riemannian centre of mass of the admissible densities. This is a natural assumption that is also made in the Euclidean case \cite{CaHiVoYa2019,CaHoMaVo2018,DelgadinoXukaiYao2022}. The Riemannian centre of mass (also known as the Karcher mean)  of a measure $\mu$  is a minimizer on $M$ of the function \cite{Karcher1977}:
\[
f(x) = \frac{1}{2} \int_M d^2(x,y) \d\mu(y).
\]
In particular, by the Euler-Lagrange equations, if $\mu$ has centre of mass $\p$, then 
\[
\int_M \log_\p y \, \d \mu(y) = 0,
\]
where $\log_\p$ denotes the Riemannian logarithm map at $\p$ (i.e., the inverse of the Riemannian exponential map $\exp_\p$) \cite{doCarmo1992}. It can be shown (we do not present these results here) that any $\mu \in \calP_1(M)$, where $M$ is a Cartan-Hadamard manifold, admits a unique centre of mass.  
%\textcolor{red}{Include lemma on existence and uniqueness of the centre of mass?} 

We set up the following set of admissible densities. For a fixed arbitrary point $\p \in M$, we denote
\[
\mathcal{P}_\p(M):=\left\{\rho\in\mathcal{P}_{ac}(M) \cap \mathcal{P}_1(M): \int_M \log_\p x\rho(x)\d x=0\right\}.
\]
In this paper we fix $\p$ and establish existence of global minimizers of the energy functional in $\calP_\p(M)$. We note that in general, $\min_{\rho\in\mathcal{P}_{\p}(M)}E[\rho]$ is a function of the point $\p$. Indeed, for two distinct points $\p_1$ and $\p_2$, although global minimizers of $E[\cdot]$ exist in $\mathcal{P}_{\p_1}(M)$ and $\mathcal{P}_{\p_2}(M)$, respectively, there is no  guarantee that $\min_{\rho\in\mathcal{P}_{\p_1}(M)}E[\rho]$ and $\min_{\rho\in\mathcal{P}_{\p_2}(M)}E[\rho]$ are the same.

A manifold $M$ is said to be {\em homogeneous} if for any two points $\p$ and $\p'$ in $M$, there exists an isometry $i:M \to M$ such that $i(\p) = \p'$ \cite{doCarmo1992}. An example of a homogeneous Cartan-Hadamard manifold is the hyperbolic space. For homogeneous manifolds, $\min_{\rho\in\mathcal{P}_{\p}(M)}E[\rho]$ does not depend on the point $\p$. This is because the energy $E$ is invariant to isometries and hence, one can send the centre of mass of a minimizer to any point, via an isometry. In this case, solving the minimization problem on $\mathcal{P}_\p(M)$ and $\calP_{ac}(M) \cap \calP_1(M)$ are equivalent. 

\begin{remark}
\normalfont
We point out that we have two instances of the logarithm notation in this paper. One is the natural logarithm of a number (denoted with {\em no} subindex, as in $\log \rho(x)$) and the other is the Riemannian logarithm (denoted {\em with} a subindex, as in $\log_\p x$). The former is a real number, the latter is a vector in the tangent space to $M$ at a point. We believe that their meaning will be clear from the context, and no confusion should arise from the similar notations.
\end{remark}
%%%%%

\subsection{Main results} 
\label{subsect:results}
We list now the main results established in this paper. The following nonexistence and existence results apply to Cartan-Hadamard manifolds with sectional curvatures bounded above/below.
\begin{theorem}[Nonexistence by blow-up or spreading]
\label{thm:nonexist}
Let $M$ be an $\dm$-dimensional Cartan-Hadamard manifold with sectional curvatures that satisfy, for all $x \in M$ and all two-dimensional subspaces $\sigma \subset T_x M$, 
\[
\calK(x;\sigma) \leq -c_M<0, 
\] 
for some constant $c_M>0$. Also assume that $h:[0,\infty) \to [-\infty,\infty)$ is a non-decreasing lower semi-continuous function which satisfies
\[
\text{ either } \quad \limsup_{\theta\to0+} \left(h(\theta)-A_1\log\theta \right) < \infty \qquad\text{or}\quad  \limsup_{\theta\to\infty}\left(h(\theta)-A_2 \sqrt{c_M}\, \theta \right) < \infty,
\]
for some $A_1>2\dm$ or $A_2<\dm-1$. Then, the energy functional \eqref{eqn:energy} admits no global minimizer in $\calP_{ac}(M)$.
\end{theorem}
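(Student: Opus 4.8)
The plan is to establish the stronger fact that $\inf_{\rho\in\calP_{ac}(M)}E[\rho]=-\infty$, which immediately precludes a global minimizer. Both hypotheses are handled with the same competing family: for $r>0$ let $\rho_r$ be the uniform probability density on the geodesic ball $B_r(\p)$, i.e.\ $\rho_r\equiv\mathrm{Vol}(B_r(\p))^{-1}$ on $B_r(\p)$ and $\rho_r\equiv0$ elsewhere; we let $r\to0+$ for the blow-up hypothesis and $r\to\infty$ for the spreading hypothesis. Each $\rho_r$ lies in $\calP_{ac}(M)$, its entropy equals $\int_M\rho_r\log\rho_r\,\d x=-\log\mathrm{Vol}(B_r(\p))$, and, because $h$ is non-decreasing and $\dist(x,y)\le 2r$ whenever $x,y\in B_r(\p)$, its interaction energy is at most $\tfrac12 h(2r)$. (If $h(2r)=-\infty$ for the relevant $r$ then $E[\rho_r]=-\infty$ outright; otherwise the entropy is finite and the positive part of the interaction integrand is bounded, so $E[\rho_r]\in[-\infty,\infty)$ is well defined.) Hence, in all cases,
\[
E[\rho_r]\ \le\ -\log\mathrm{Vol}(B_r(\p))+\tfrac12\,h(2r),
\]
and it remains only to insert the relevant volume comparison together with the hypothesis on $h$.

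For the blow-up hypothesis, $\limsup_{\theta\to0+}\bigl(h(\theta)-A_1\log\theta\bigr)<\infty$ yields a constant $C_1$ with $h(\theta)\le A_1\log\theta+C_1$ for all small $\theta>0$; and since $M$ has non-positive sectional curvature, the Rauch comparison theorem makes the Jacobian of $\exp_\p$ everywhere at least $1$, so $\mathrm{Vol}(B_r(\p))\ge\omega_\dm r^\dm$ for all $r>0$, with $\omega_\dm$ the Euclidean volume of the unit $\dm$-ball. Substituting, for $r$ small,
\[
E[\rho_r]\ \le\ -\log\bigl(\omega_\dm r^\dm\bigr)+\tfrac12\bigl(A_1\log(2r)+C_1\bigr)\ =\ \Bigl(\tfrac{A_1}{2}-\dm\Bigr)\log r+C_1',
\]
and since $A_1>2\dm$ the coefficient $\tfrac{A_1}{2}-\dm$ is positive while $\log r\to-\infty$, whence $E[\rho_r]\to-\infty$ as $r\to0+$.

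For the spreading hypothesis, $\limsup_{\theta\to\infty}\bigl(h(\theta)-A_2\sqrt{c_M}\,\theta\bigr)<\infty$ yields a constant $C_2$ with $h(\theta)\le A_2\sqrt{c_M}\,\theta+C_2$ for all large $\theta$; and the upper bound $\calK\le-c_M$ forces, by the comparison results of Section~\ref{sec:diff-geom} (Rauch's theorem for the Jacobian of $\exp_\p$, equivalently the G\"unther volume inequality), that $\mathrm{Vol}(B_r(\p))$ is at least the volume of the radius-$r$ geodesic ball in the space form of constant curvature $-c_M$; the latter obeys $\log\mathrm{Vol}(B_r(\p))\ge(\dm-1)\sqrt{c_M}\,r+C_3$ for all large $r$ and some constant $C_3$. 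Substituting, for $r$ large,
\[
E[\rho_r]\ \le\ -(\dm-1)\sqrt{c_M}\,r-C_3+\tfrac12\bigl(2A_2\sqrt{c_M}\,r+C_2\bigr)\ =\ \bigl(A_2-(\dm-1)\bigr)\sqrt{c_M}\,r+C_2',
\]
which tends to $-\infty$ as $r\to\infty$ since $A_2<\dm-1$. In both cases $\inf_{\calP_{ac}(M)}E=-\infty$, so the energy \eqref{eqn:energy} admits no global minimizer in $\calP_{ac}(M)$.

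The arithmetic is elementary, so the substance lies in the two volume comparison estimates from Section~\ref{sec:diff-geom}; the delicate one is the \emph{sharp} exponential growth rate $(\dm-1)\sqrt{c_M}$ in the spreading regime, which is exactly where the uniform curvature bound $\calK\le-c_M$ is used, and which is what makes the linear threshold $A_2=\dm-1$ sharp --- it is matched from above by the existence statement of Theorem~\ref{them:existence} for homogeneous manifolds. One should also check that the constants $C_1$, $C_2$ in the bounds on $h$ are independent of $r$, which is precisely what the two finite-$\limsup$ hypotheses guarantee, and is why the two displays carry genuine constant remainders $C_1'$, $C_2'$ rather than $r$-dependent error terms.
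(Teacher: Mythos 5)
Your proof is correct and follows essentially the same route as the paper: the same family of uniform densities on geodesic balls, the same entropy identity $-\log|B_r(\p)|$ and interaction bound $\tfrac12 h(2r)$, and the same volume comparison (Euclidean lower bound for $r\to0^+$, G\"unther/hyperbolic lower bound for $r\to\infty$). The only cosmetic difference is in the spreading case, where the paper extracts the exponential volume growth via the inequality $\sinh x/x\geq\beta_\epsilon e^{(1-\epsilon)x}$ and then chooses $\epsilon$ small using the strict inequality $A_2<\dm-1$, whereas you assert the sharp asymptotic $\log\mathrm{Vol}(B_r(\p))\geq(\dm-1)\sqrt{c_M}\,r+C_3$ directly; both are valid.
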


For general Cartan-Hadamard manifolds, the existence result established in this paper is the following.
\begin{theorem}[Existence of a global minimizer: general case]
\label{thm:exist-gen}
Let $M$ be an $\dm$-dimensional Cartan-Hadamard manifold with sectional curvatures that satisfy, for all $x \in M$ and all two-dimensional subspaces $\sigma \subset T_x M$, 
\begin{equation}
\label{eqn:K-lb-c}
-c_m  \leq \calK(x;\sigma)\leq 0, 
\end{equation}
for some constant $c_m>0$. Also assume that $h:[0, \infty)\to[-\infty, \infty)$ is a non-decreasing lower semi-continuous function which satisfies
\begin{equation}
\label{eqn:h-hyp}
\liminf_{\theta\to 0+}\left(h(\theta)-A_1\log\theta\right)>-\infty, \qquad \text{ and } \qquad \liminf_{\theta\to\infty}(h(\theta)-\hc(\theta)) >-\infty,
\end{equation}
for some $0<A_1<2 \dm$, where $\hc$ is a non-decreasing convex function with the property that
\begin{equation}
\label{eqn:hc}
\lim_{\theta\to\infty}\frac{\hc(\theta)}{\theta}=\infty.
\end{equation}
Then there exists a global minimum of $E[\rho]$ in $\calP_\p(M)$, for any $\p \in M$ fixed.
\end{theorem}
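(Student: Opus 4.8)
The plan is to use the direct method in the calculus of variations on the admissible class $\calP_\p(M)$. First I would establish a lower bound on the energy showing $E$ is bounded below on $\calP_\p(M)$, and moreover \emph{coercive} in a suitable sense: using the logarithmic HLS inequality on Cartan-Hadamard manifolds (Section \ref{sec:HLS}), the entropy term controls the attractive interaction term from below up to a correction involving the Jacobian of $\exp_\p$, which by the lower curvature bound $-c_m \le \calK \le 0$ and the Rauch/volume comparison theorems grows no faster than that of the model space of constant curvature $-c_m$. Combined with the hypothesis $h(\theta) \ge A_1 \log\theta + \hc(\theta) - C$ with $0 < A_1 < 2\dm$ (to beat the $2\dm$ in the log-HLS inequality) and $\hc$ superlinear, this should yield $E[\rho] \ge -C' + \tfrac12\iint (\hc(\dist(x,y)) - C'')\rho\rho$, so the energy is bounded below and, crucially, a minimizing sequence has a uniform bound on $\iint \hc(\dist(x,y))\rho_k(x)\rho_k(y)\,\dd x\,\dd y$.

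The second step is tightness of a minimizing sequence $\{\rho_k\} \subset \calP_\p(M)$. This is where the centre-of-mass constraint $\int_M \log_\p x\, \rho_k(x)\,\dd x = 0$ does its work: it prevents the mass from escaping to infinity in any single direction. More precisely, the superlinear growth of $\hc$ together with the uniform bound on the double integral $\iint \hc(\dist(x,y))\rho_k\rho_k$ forces the second moments (relative to any fixed point) to be controlled once the centre of mass is pinned — one splits $M \times M$ into the diagonal region where $\dist(x,y)$ is small and its complement, and uses that mass cannot concentrate too far from $\p$ without making the double integral large. This gives uniform tightness of $\{\rho_k\}$, hence (Prokhorov) a weakly convergent subsequence $\rho_k \rightharpoonup \rho$ with $\rho \in \calP_1(M)$, and the centre-of-mass constraint passes to the limit because $\log_\p$ is continuous with linear growth and the sequence has uniformly integrable first moments.

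The third step is lower semicontinuity of $E$ along this weakly convergent sequence. The entropy $\int \rho\log\rho$ is weakly lower semicontinuous (it is convex and lower semicontinuous in the right topology — this is standard, via duality / de la Vallée-Poussin, using again the volume growth bound so the reference measure behaves well). For the interaction term, since $h = \liminf$ of a sequence of bounded continuous truncations $h_N = \min(h, N)$ with $h_N$ non-decreasing and continuous, $h(\dist(x,y))$ is lower semicontinuous and bounded below on $M \times M$ after subtracting its (integrable-against-$\rho_k\rho_k$, uniformly) negative part; weak convergence of $\rho_k$ gives weak convergence of $\rho_k \otimes \rho_k$ on $M \times M$, and lower semicontinuity of integration of a bounded-below lsc function against weakly convergent measures yields $\iint h\,\rho\rho \le \liminf \iint h\,\rho_k\rho_k$. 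Combining, $E[\rho] \le \liminf E[\rho_k] = \inf_{\calP_\p(M)} E$, and since the value of $E$ on a non-$\calP_{ac}$ limit is by definition the relaxed $\inf\liminf$, one checks the limit $\rho$ actually lies in $\calP_{ac}(M)$ (the entropy bound forces absolute continuity) and in $\calP_\p(M)$, so $\rho$ is the desired global minimizer.

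The main obstacle I expect is the tightness step: turning the \emph{pairwise} control $\iint \hc(\dist(x,y))\rho_k\rho_k \le C$ together with the centre-of-mass constraint into a genuine \emph{single-particle} tightness statement $\rho_k(M \setminus B_R(\p)) \to 0$ uniformly in $k$ as $R \to \infty$. In $\bbr^\dm$ this is handled by convexity/translation arguments; on a general Cartan-Hadamard manifold one must be careful because there is no subtraction of points, only the logarithm map, and because a minimizing sequence could in principle split into two far-apart lumps whose centre of mass is still $\p$. Ruling this out requires exploiting convexity of $\hc$ along geodesics (so that $\hc(\dist(x,y))$ controls $\hc$ of distances to $\p$ via the triangle inequality and the constraint), which is exactly why the convexity hypothesis on $\hc$ in \eqref{eqn:hc} is imposed; making this quantitative on the manifold — as opposed to in Euclidean space — is the delicate point, and it is presumably where the Cartan-Hadamard structure (nonpositive curvature, so geodesics diverge and $\dist$ is convex) is used.
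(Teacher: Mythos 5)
Your proposal follows essentially the same route as the paper: the direct method, coercivity from the manifold logarithmic HLS inequality, tightness from the centre-of-mass constraint together with superlinear growth of $\hc$, and lower semicontinuity of the two energy components. You also correctly identify the key difficulty --- converting the pairwise bound on $\iint \hc(\dist(x,y))\rho_k(x)\rho_k(y)\,\dd x\,\dd y$ into single-particle control --- and the right tool for it: the paper's Lemma \ref{lem:hc-rho} does exactly what you anticipate, using $\dist(x,y)\ge|\theta_x-\theta_y\cos\angle(x\p y)|$ (from Rauch) and Jensen's inequality for the convex even extension of $\hc$, so that the constraint $\int_M\theta_y\cos\angle(x\p y)\rho_k(y)\,\dd y=0$ yields $\int_M\hc(\dist(x,y))\rho_k(y)\,\dd y\ge\hc(\theta_x)$.

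Two points where your sketch underestimates the work or assigns the ingredients to the wrong step. First, tightness is obtained more cheaply in the paper: Lemmas \ref{Lem6.1} and \ref{Lem6.2} give $E[\rho]\ge -C + c\,\calW_1(\rho,\delta_\p)$ (Proposition \ref{prop:E-W1}), so a minimizing sequence has uniformly bounded first moments and may be confined to a tight Wasserstein ball; your worry about two far-apart lumps with centre of mass at $\p$ is already resolved at the level of first moments by the inequality $\iint\dist(x,y)\rho(x)\rho(y)\,\dd x\,\dd y\ge(2-\sqrt2)\calW_1(\rho,\delta_\p)$ for centred densities. The uniform $\hc$-moment bound is needed elsewhere: (a) to show the centre of mass passes to the weak limit (Lemma \ref{lemma:CM}) --- a first-moment bound alone does not give the uniform integrability of $\theta_x$ that you invoke for this --- and (b) for the entropy, which is \emph{not} weakly lower semicontinuous on $M$ by a ``standard'' argument. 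Writing $\int_M\rho\log\rho\,\dd x=\int f_\#\rho\log f_\#\rho\,\dd u-\int_M\rho\log|J(f^{-1})(f(x))|\,\dd x$, the Jacobian term enters with a \emph{negative} sign, so lsc of the Euclidean entropy of the pushforward must be supplemented by continuity (at least upper semicontinuity) of $\int_M\rho_k\log|J(f^{-1})(f(x))|\,\dd x$ along the sequence; since $\log|J|$ grows linearly in $\theta_x$ when the curvature is bounded below by $-c_m<0$, this tail control is exactly where the superlinear moment bound is consumed (Lemma \ref{lem:Jint-cont}). With these two adjustments your plan matches the paper's proof.
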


We note that the assumption \eqref{eqn:h-hyp} is satisfied by many interesting interaction potentials used in the literature, such as power-law $h(\theta) = \theta^\alpha$ with $\alpha>1$, and exponential $h(\theta) = e^\theta$. When the manifold is homogeneous, we can relax the assumption of superlinear growth at infinity and require only linear behaviour instead. The existence result we show in this case is the following.

\begin{theorem}[Existence of a global minimizer: homogeneous manifolds]
\label{them:existence}
Let $M$ be a homogeneous $\dm$-dimensional Cartan-Hadamard manifold, whose sectional curvatures satisfy, for all $x \in M$ and all two-dimensional subspaces $\sigma \subset T_x M$, 
\begin{equation}
\label{eqn:K-lowerb}
-c_m \leq \calK(x;\sigma)\leq 0, 
\end{equation}
for some constant $c_m>0$. Also assume that $h:[0, \infty)\to[-\infty, \infty)$ is a non-decreasing lower semi-continuous function which satisfies
\begin{equation}
\label{eqn:assumptions-h}
\liminf_{\theta\to 0+}\left(h(\theta)-A_1\log\theta\right)>-\infty, \qquad \text{ and } \qquad \liminf_{\theta\to\infty}\left(h(\theta)-A_2 \sqrt{c_m} \, \theta\right)>-\infty,
\end{equation}
for some $0<A_1<2\dm$ and $A_2>2(\dm-1)$. Then there exists a global minimum of $E[\rho]$ in $\calP_\p(M)$, for any $\p\in M$ fixed.
\end{theorem}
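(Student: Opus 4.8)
The plan is to run the direct method of the calculus of variations for $E$ over $\calP_\p(M)$, the hypotheses $A_1<2\dm$ and $A_2>2(\dm-1)$ entering through a coercivity estimate built from the logarithmic Hardy--Littlewood--Sobolev inequality on $M$ (Section~\ref{sec:HLS}) and the volume comparison results of Section~\ref{sec:diff-geom}. The overall scheme is the same as for Theorem~\ref{thm:exist-gen}; what is new is that the curvature \emph{lower} bound lets merely linear growth of $h$, rather than superlinear growth, contain the spreading. Throughout I pass to the tangent space at $\p$: for $\rho\in\calP_\p(M)$ write $\tilde\rho:=(\exp_\p^{-1})_\#\rho$, a probability density on $T_\p M\cong\bbr^\dm$ with $\tilde\rho(v)=\rho(\exp_\p v)\,J_\p(v)$, where $J_\p$ is the Jacobian of $\exp_\p$; the centre-of-mass constraint reads $\int_{\bbr^\dm}v\,\tilde\rho(v)\,\dd v=\int_M\log_\p x\,\rho(x)\,\dd x=0$, and I set $m_1[\rho]:=\int_M\dist(\p,x)\rho(x)\,\dd x=\int_{\bbr^\dm}\|v\|\tilde\rho(v)\,\dd v$.

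\emph{Coercivity.} Since $h$ is non-decreasing and satisfies \eqref{eqn:assumptions-h}, there is a constant $C$ with $h(\theta)\ge A_1\log(\min(\theta,1))+A_2\sqrt{c_m}\,\theta-C$ for all $\theta>0$. On a Cartan--Hadamard manifold $\exp_\p$ is distance non-decreasing (Section~\ref{sec:diff-geom}), so $h(\dist(\exp_\p v,\exp_\p w))\ge h(\|v-w\|)$ and hence
\[
\tfrac12\iint_{M\times M}h(\dist(x,y))\rho(x)\rho(y)\,\dd x\,\dd y\ \ge\ \tfrac{A_1}{2}\iint\log(\min(\|v-w\|,1))\,\tilde\rho(v)\tilde\rho(w)\,\dd v\,\dd w\ +\ \tfrac{A_2\sqrt{c_m}}{2}\iint\|v-w\|\,\tilde\rho\tilde\rho\ -\ C.
\]
Jensen's inequality with $\int v\,\tilde\rho=0$ gives $m_1[\rho]\le\iint\|v-w\|\tilde\rho\tilde\rho\le 2\,m_1[\rho]$, so the second term dominates $\tfrac{A_2\sqrt{c_m}}{2}\,m_1[\rho]$, while the first equals $\tfrac{A_1}{2}\iint\log\|v-w\|\tilde\rho\tilde\rho$ up to an $O(\varepsilon\,m_1[\rho])$ error plus an $\varepsilon$-dependent constant. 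The logarithmic HLS inequality on $M$ (Section~\ref{sec:HLS}), together with the G\"unther--Bishop bound $\log J_\p(v)\le(\dm-1)\sqrt{c_m}\|v\|$ coming from $\calK\ge-c_m$, yields $\dm\iint\log\|v-w\|\tilde\rho\tilde\rho\ge-\int_M\rho\log\rho\,\dd x-(\dm-1)\sqrt{c_m}\,m_1[\rho]-C$. Finally, from $\int_M\rho\log\rho\,\dd x\ge-\log\int_M e^{-V}\,\dd x-\int_M V\rho$ with a weight $V(x)=(\dm-1)\sqrt{c_m}\dist(\p,x)$ plus a lower-order logarithmic correction chosen so that $\int_M e^{-V}\,\dd x<\infty$ (possible by the volume comparison), one gets $\int_M\rho\log\rho\,\dd x\ge-C-\bigl((\dm-1)\sqrt{c_m}+\varepsilon\bigr)m_1[\rho]$. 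Combining these estimates gives, for all $\rho\in\calP_\p(M)$,
\[
E[\rho]\ \ge\ \kappa\,m_1[\rho]\ -\ C ,
\]
for some $\kappa>0$, while the same computation with the entropy retained gives $\bigl(1-\tfrac{A_1}{2\dm}\bigr)\int_M\rho\log\rho\,\dd x\le E[\rho]+C$. The positivity of $1-\tfrac{A_1}{2\dm}$ and of $\kappa$ is exactly where the \emph{strict} inequalities $A_1<2\dm$ and $A_2>2(\dm-1)$ are used, the latter leaving room to absorb the $\varepsilon$-errors.

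\emph{Direct method and conclusion.} By the coercivity bound, $E_\ast:=\inf_{\calP_\p(M)}E>-\infty$, and $E_\ast<\infty$ by testing $E$ on the push-forward to $M$ of a uniform density on a small ball of $T_\p M$ (which lies in $\calP_\p(M)$ by symmetry). Let $\rho_k\in\calP_\p(M)$ be a minimizing sequence. The coercivity bound gives $\sup_k m_1[\rho_k]<\infty$, hence tightness of $\{\rho_k\}$ by Markov's inequality, and, together with the entropy estimate above, the first-moment control, and the volume comparison (which bounds $\int_{\{\rho_k<1\}}\rho_k\log\tfrac{1}{\rho_k}\,\dd x$ in terms of $m_1[\rho_k]$), one has $\sup_k\int_M|\rho_k\log\rho_k|\,\dd x<\infty$. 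By tightness and the Dunford--Pettis theorem, along a subsequence $\rho_k\rightharpoonup\rho_\infty$ weakly in $L^1(M,\dd x)$ and as measures, with $\rho_\infty\in\calP_{ac}(M)\cap\calP_1(M)$ (finite first moment by weak lower semicontinuity of $\mu\mapsto\int_M\dist(\p,\cdot)\,\dd\mu$). Lower semicontinuity then gives $E[\rho_\infty]\le\liminf_k E[\rho_k]=E_\ast$: the entropy is weakly lower semicontinuous by convexity, and the interaction energy because $h\circ\dist$ is lower semicontinuous and $\{\rho_k\otimes\rho_k\}$ inherits equi-integrability from the uniform entropy bound. Finally, $\rho_\infty$ need not lie in $\calP_\p(M)$, but since $M$ is homogeneous there is an isometry $i$ carrying the (unique) Riemannian centre of mass of $\rho_\infty$ to $\p$; by the invariance \eqref{eqn:inv-E}, $i_\#\rho_\infty\in\calP_\p(M)$ and $E[i_\#\rho_\infty]=E[\rho_\infty]\le E_\ast$, so $i_\#\rho_\infty$ is a global minimizer of $E$ in $\calP_\p(M)$. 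Homogeneity is essential at this step: without it the centre-of-mass constraint need not survive the weak limit, since merely linear growth of $h$ controls only the first moment and not higher ones.

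\emph{Main obstacle.} The delicate point is the lower semicontinuity of the interaction term. Because $h$ is only lower semicontinuous and may decrease to $-\infty$ on the diagonal, one cannot simply truncate $h\circ\dist$ from below and must instead rule out any mass of $\rho_k\otimes\rho_k$ escaping into its logarithmic singularity; this is precisely what the uniform entropy bound is spent on. For $\dm\ge2$ a Young-type inequality gives
\[
\int_{B_\varepsilon(x)}\bigl(-\log\dist(x,y)\bigr)\rho_k(y)\,\dd y\ \le\ \int_{B_\varepsilon(x)}\rho_k\log\rho_k\,\dd y\ +\ \tfrac1e\int_{B_\varepsilon(x)}\dist(x,y)^{-1}\,\dd y ,
\]
and $\int_{B_\varepsilon(x)}\dist(x,y)^{-1}\,\dd y\to0$ uniformly in $x$ as $\varepsilon\to0$ by the local volume comparison, which delivers the needed equi-integrability of the negative part of the kernel against $\{\rho_k\otimes\rho_k\}$. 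A secondary difficulty, already visible in the coercivity step, is that the exponential volume growth of $M$ forces the weight $V$ to carry an extra logarithmic term; matching the resulting constants to the hypotheses succeeds only because $A_1<2\dm$ and $A_2>2(\dm-1)$ are strict and $\theta\mapsto\log(1+\theta)$ is sublinear.
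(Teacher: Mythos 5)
Your overall architecture matches the paper's: direct method on $\calP_\p(M)$, coercivity via the logarithmic HLS inequality on $M$ plus the Jacobian bound $\log J_\p\le(\dm-1)\sqrt{c_m}\|v\|$, tightness and Dunford--Pettis for a minimizing sequence, and finally an isometry of the homogeneous manifold to restore the centre of mass of the weak limit. The coercivity computation is essentially equivalent to the paper's Lemma \ref{Lem6.2h} and Proposition \ref{prop:E-W1} (you spend only the fraction $A_1/(2\dm)$ of the HLS inequality and compensate with a Gibbs-type lower bound on the entropy; the paper spends the full inequality --- both bookkeepings close because $A_1<2\dm$ and $A_2>2(\dm-1)$ are strict), and your identification of where homogeneity is indispensable is exactly right.

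There is, however, a genuine gap at the lower semicontinuity step. You assert that ``the entropy is weakly lower semicontinuous by convexity'' and treat the interaction energy separately. On a Cartan--Hadamard manifold with $\calK\le -c_M<0$ somewhere (or, more to the point, with exponential volume growth, which is exactly the regime this theorem targets), the entropy $\int_M\rho\log\rho\,\dd x$ is \emph{not} weakly lower semicontinuous under the bounds you have available (uniform first moment and uniform $\int|\rho_k\log\rho_k|$). Concretely, take $\rho_0$ uniform on $B_1(\p)$ and set
\begin{equation*}
\rho_k=\Bigl(1-\tfrac1k\Bigr)\rho_0+\tfrac1k\,\sigma_k,\qquad \sigma_k=\frac{1}{|A_k|}\mathbf 1_{A_k},\quad A_k=\{x: k\le\theta_x\le k+1\},
\end{equation*}
with $\sigma_k$ rotationally symmetric so that $\rho_k\in\calP_\p(M)$. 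Then $\rho_k\rightharpoonup\rho_0$, the first moments and $\int|\rho_k\log\rho_k|$ stay bounded, yet since $|A_k|\sim Ce^{(\dm-1)\sqrt{c_m}k}$ one gets $\int\rho_k\log\rho_k\to\int\rho_0\log\rho_0-(\dm-1)\sqrt{c_m}$, strictly below the limit's entropy: the negative part of the entropy leaks to infinity at a rate the linear first-moment control cannot see. (The total energy is still lower semicontinuous along this sequence because the interaction gain $\tfrac12 A_2\sqrt{c_m}>( \dm-1)\sqrt{c_m}$ overcompensates --- but that is precisely a cancellation between the two terms, not separate lower semicontinuity of each.) This is exactly the point flagged in Remark \ref{rmk:lsc-decouple}: under superlinear growth the Jacobian term $\int\rho_k\log|J(f^{-1})(f(x))|$ is continuous along minimizing sequences (Lemma \ref{lem:Jint-cont}) and the two energy components decouple, but under merely linear growth the paper must write $E[\rho_k]$ as the Euclidean entropy of $f_\#\rho_k$, plus $\tfrac12\iint(h-2(\dm-1)\sqrt{c_m}\dist)\rho_k\rho_k$, plus the coupled remainder $\sqrt{c_m}(\dm-1)\iint\dist\,\rho_k\rho_k-\int\rho_k\log|J(f^{-1})(f(x))|$, and prove lower semicontinuity of the third block using the pointwise inequality $\int_M\dist(x,y)\rho_k(y)\,\dd y\ge\theta_x$, which is where the constraint $\rho_k\in\calP_\p(M)$ enters (Part 3 of Proposition \ref{prop:lsc-homog}). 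Your proof needs this coupling (or an equivalent device); as written, the decoupled lower semicontinuity claim is false, and the ``main obstacle'' you identify (the singularity of $h$ at the origin, handled by the Young-type inequality as in the paper) is not the one specific to the linear-growth regime.
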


\begin{remark}(Comparison with the Euclidean case)
\label{rmk:compare-AA}
\normalfont
The analogous results in $\bbr^\dm$ are \cite[Theorem 4.1]{CaDePa2019} for nonexistence and \cite[Theorem 6.1]{CaDePa2019}  (see also \cite[Remark 6.2]{CaDePa2019}) for existence of minimizers. As far as the behaviour at origin is concerned (the $\theta \to 0+$ limit), there are no differences. In both cases, the interaction potential cannot have a singularity worse than $2\dm \log \theta$ at origin, or else the attraction is too strong and results in blow-up. Since this is a local behaviour, curvature plays no key role. 

On the other hand, the behaviour at infinity in the manifold case is different from that in $\bbr^\dm$. In $\bbr^n$, the potential $h(\theta)$ needs to grow faster than $2 \dm \log \theta$ at infinity to prevent the diffusion, while in Theorems \ref{thm:exist-gen} and \ref{them:existence} the growth is required to be superlinear and linear, respectively.  Also, in Theorem \ref{thm:nonexist} spreading occurs if the potential grows slower than linear, with a coefficient that depends on dimension and the upper bound of curvatures.
\end{remark}

\begin{remark}(Superlinear versus linear attraction at infinity)
\label{eqn:superlinear}
\normalfont
For general manifolds, one key reason why superlinear attraction at infinity is needed, is to ensure that minimizing sequences preserve the centre of mass (see Lemma \ref{lemma:CM}). For homogeneous manifolds, this condition is relaxed to linear attraction at infinity, with a coefficient that depends on both the dimension and the lower bound of the sectional curvatures. Intuitively, larger attraction at infinity is needed to contain diffusion on manifolds of larger (in magnitude) curvature. In the homogeneous case, minimizing sequences do not necessarily preserve the centre of mass, but this can be compensated by the existence of isometries on homogeneous manifolds.
\end{remark}

%%%%%%%%%%

\section{Comparison results in differential geometry}\label{sec:diff-geom}
In this section we present some comparison results in Riemannian geometry, that are key to our analysis of existence of energy minimizers. Recall that $M$ is assumed to be a Cartan-Hadamard manifold, and hence, it has no conjugate points, it is diffeomorphic to $\bbr^\dm$ and the exponential map at any point is a global diffeomorphism.
%$B_r(p):=\{x\in M: \dist(x, p)<r\}$ is the open ball centred at $p$, of radius $r$, defined for all $p\in M$ and $r>0$.

\subsection{Rauch comparison theorem}
\label{subsect:Rauch}
One tool in our proofs is Rauch's comparison theorem, which enables us to compare lengths of curves on different manifolds. The result is the following \cite[Chapter 10, Proposition 2.5]{doCarmo1992}.

\begin{theorem}[Rauch comparison theorem]\label{RCT}
Let $M$ and $\tilde{M}$ be Riemannian manifolds and suppose that for all $p\in M$, $\tilde{p}\in \tilde{M}$, and $\sigma\subset T_pM$, $\tilde{\sigma}\subset T_{\tilde{p}}\tilde{M}$, the sectional curvatures $\secc$ and $\tilde{\secc}$ of $M$ and $\tilde{M}$, respectively, satisfy
\[
\tilde{\secc}(\tilde{p}; \tilde{\sigma})\geq \secc(p;\sigma).
\]
Let $p\in M$, $\tilde{p}\in\tilde{M}$ and fix a linear isometry $i:T_pM\to T_{\tilde{p}}\tilde{M}$. Let $r>0$ be such that the restriction ${\exp_p}_{|B_r(0)}$ is a diffeomorphism and ${\exp_{\tilde{p}}}_{|\tilde{B}_r(0)}$ is non-singular. Let $c:[0, a]\to\exp_p(B_r(0))\subset M$ be a differentiable curve and define a curve $\tilde{c}:[0, a]\to\exp_{\tilde{p}}(\tilde{B}_r(0))\subset\tilde{M}$ by
\[
\tilde{c}(s)=\exp_{\tilde{p}}\circ i\circ\exp^{-1}_p(c(s)),\qquad s\in[0, a].
\]  
Then the length of $c$ is greater or equal than the length of $\tilde{c}$.
\end{theorem}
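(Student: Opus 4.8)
The plan is to reduce the statement to a pointwise comparison of the two exponential differentials, and from there to the classical comparison of Jacobi field norms. Writing $v(s) := \exp_p^{-1}(c(s)) \in B_r(0) \subset T_pM$, we have $c(s) = \exp_p(v(s))$ and $\tilde c(s) = \exp_{\tilde p}(i(v(s)))$, so by the chain rule $c'(s) = \d(\exp_p)_{v(s)}(v'(s))$ and $\tilde c'(s) = \d(\exp_{\tilde p})_{i(v(s))}(i(v'(s)))$. Hence it suffices to prove the pointwise inequality
\[
\bigl| \d(\exp_{\tilde p})_{i(v)}(i(w)) \bigr| \le \bigl| \d(\exp_p)_v(w) \bigr|, \qquad v \in B_r(0),\ w \in T_pM,
\]
and then integrate over $s \in [0,a]$ to conclude $\mathrm{length}(\tilde c) \le \mathrm{length}(c)$.

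First I would translate this into a statement about Jacobi fields. Let $\gamma(s) = \exp_p(sv)$ and $\tilde\gamma(s) = \exp_{\tilde p}(s\, i(v))$; these geodesics have equal speed $|v|$ because $i$ is a linear isometry. Let $J$ be the Jacobi field along $\gamma$ with $J(0) = 0$, $J'(0) = w$, so that $\d(\exp_p)_v(w) = J(1)$, and define $\tilde J$ analogously along $\tilde\gamma$ with $\tilde J'(0) = i(w)$; then $|\tilde J'(0)| = |J'(0)|$ and $\langle \tilde J'(0), \tilde\gamma'(0)\rangle = \langle J'(0), \gamma'(0)\rangle$. It remains to show that, under $\tilde\secc \ge \secc$, one has $|\tilde J(s)| \le |J(s)|$ for all $s$ in a range free of conjugate points — which is all of $[0,1]$ here, since a Cartan–Hadamard manifold has no conjugate points and $\exp_{\tilde p}|_{\tilde B_r(0)}$ is assumed non-singular.

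For the Jacobi-field comparison I would argue as follows. Splitting $J = J^\top + J^\perp$ into the components along and orthogonal to $\gamma'$, one checks that $J^\top(s) = \frac{\langle J'(0),\gamma'(0)\rangle}{|\gamma'(0)|^2}\, s\, \gamma'(s)$ and $J^\perp$ are both Jacobi fields and that $|J^\top(s)| = |\tilde J^\top(s)|$, so it suffices to treat the case $J'(0) \perp \gamma'(0)$. Then $g(s) := |\tilde J(s)|^2 / |J(s)|^2 \to 1$ as $s \to 0^+$, so it is enough to show $g$ is non-increasing, i.e. $(\log|\tilde J|^2)'(s_0) \le (\log|J|^2)'(s_0)$ for every $s_0$; rescaling $J$ so that $|J(s_0)| = |\tilde J(s_0)|$, this becomes $\langle \tilde J'(s_0), \tilde J(s_0)\rangle \le \langle J'(s_0), J(s_0)\rangle$. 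Using the index-form identity $\langle J'(s_0), J(s_0)\rangle = I_{s_0}(J, J)$ valid for Jacobi fields vanishing at $0$, I would build a comparison field $W$ along $\tilde\gamma|_{[0,s_0]}$ by transplanting the coefficients of $J$ in a parallel orthonormal frame into a parallel frame along $\tilde\gamma$, composed with a fixed rotation chosen so that $W(0) = 0$ and $W(s_0) = \tilde J(s_0)$; this keeps $|W(s)| = |J(s)|$ and $|W'(s)| = |J'(s)|$ for all $s$. The Index Lemma (applicable since $s_0$ precedes the first conjugate point of $\tilde\gamma$) gives $I_{s_0}(\tilde J,\tilde J) \le I_{s_0}(W,W)$, while $\tilde\secc \ge \secc$, together with the equal geodesic speeds and $|W(s)| = |J(s)|$, gives $\langle \tilde R(W,\tilde\gamma')\tilde\gamma', W\rangle \ge \langle R(J,\gamma')\gamma', J\rangle$ pointwise, hence $I_{s_0}(W,W) \le I_{s_0}(J,J)$. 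Chaining these yields $\langle \tilde J'(s_0),\tilde J(s_0)\rangle \le \langle J'(s_0),J(s_0)\rangle$, as desired.

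The main obstacle is this last Jacobi-field comparison, and within it the construction of the comparison field $W$ (the frame transplantation plus the endpoint-matching rotation) and the verification that the Index Lemma applies on the whole relevant interval — which is precisely where the no-conjugate-point and non-singularity hypotheses enter — as well as the careful bookkeeping of the geodesic speed $|v|$ in the curvature-term comparison. The preliminary reductions (chain rule, the Jacobi-field description of $\d\exp$, and the tangential/normal splitting) are routine.
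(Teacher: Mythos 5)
The paper does not prove this statement: it is quoted verbatim, with citation, from do Carmo [Chapter 10, Proposition 2.5] as a classical result, so there is no in-paper proof to compare against. Your argument is the standard textbook proof of Rauch's theorem --- reduction via the chain rule to a pointwise bound on $d\exp$, the Jacobi-field description of $d\exp$, the tangential/normal splitting, and monotonicity of $|\tilde J(s)|^2/|J(s)|^2$ proved through the index-form identity, the transplanted comparison field $W$, and the Index Lemma --- and it is correct in all essential steps. The only small slip is attributing the absence of conjugate points along $\tilde\gamma$ to $\tilde M$ being Cartan--Hadamard: the theorem as stated is for general Riemannian manifolds, and the needed facts follow instead from the stated hypotheses that $\exp_{\tilde p}$ is non-singular on $\tilde B_r(0)$ (no conjugate points along $\tilde\gamma|_{[0,1]}$, so the Index Lemma applies) and that $\exp_p$ is a diffeomorphism on $B_r(0)$ (so $|J(s)|\neq 0$ for $s\in(0,1]$ and the ratio $g$ is well defined).
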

We will use Theorem \ref{RCT} to compare lengths of curves on a general Cartan-Hadamard manifold $M$ with lengths of curves on spaces of constant curvature.

%%%%%

\subsection{Bounds on the Jacobian of the exponential map and volume comparison theorems.}
\label{subsect:bounds-J}
We will follow notations and the general framework from the book by Chavel \cite{Chavel2006}.  A Jacobi field along a unit-speed geodesic $\gamma$ is a differentiable vector field $Y$ along $\gamma$ that satisfies the Jacobi equation
\begin{equation}
\label{eqn:Jacobi}
\nabla_t^2 Y + \calR(\gamma',Y) \gamma' = 0,
\end{equation}
where $\nabla$ denotes the Levi-Civita connection on $M$ and $\calR$ represents the curvature tensor. Denote by $\calJ$ the vector space of Jacobi fields along $\gamma$ and by $\calJ^\perp$ the subspace of $\calJ$ consisting of the Jacobi fields orthogonal to $\gamma$. Jacobi fields are variation fields through geodesics. In informal terms, they measure the spread and the volume growth of a geodesic spray.

Let $x \in M$ and $\gamma$ be a unit-speed geodesic through $x$, $\gamma(0) = x$. Set an orthonormal basis $\{e_1, \cdots, e_\dm\}$ of  $T_x M$, with $e_\dm = \gamma'(0)$, and consider the Jacobi fields $J_j(t)$, $j=1,\dots, \dm-1$, along the geodesic $\gamma$ that satisfy
\[
J_j(0)=0\qquad\text{and}\qquad J_j'(0)=e_j, \qquad  j =1,\dots,\dm-1.
\]
Note that $J_j(t) \in \calJ^\perp$ for all $t \geq 0$, $j=1,\dots, \dm-1$. Denote by $\calA(t)$ the matrix with columns $J_j(t)$, $j = 1,\dots,\dm-1$, i.e., 
\[
\mathcal{A}(t)=[J_1(t) \;  J_2(t) \; \dots \; J_{\dm-1}(t)].
\]
We point out that $\calA(t)$ has been constructed for the fixed geodesic $\gamma$, so it depends on it. When there is potential for confusion in this regard, we will use the notation  $\calA(t; u)$, where $u = \gamma'(0)$.

By the properties of the Jacobi fields, we have
\[
J_j(t)=(d\exp_x)_{te_\dm}(te_j),\qquad j =1,\dots, \dm-1,
\]
and hence we can write
\begin{equation}
\label{eqn:calA-exp}
\mathcal{A}(t)=t[(d\exp_x)_{te_\dm}(e_1) \quad (d\exp_x)_{te_\dm}(e_2) \quad  \dots \quad  (d\exp_x)_{te_\dm}(e_{\dm-1})].
\end{equation}

Particularly relevant for the present work is the Jacobian $J(\exp_x)$ of the exponential map.  Since
\[
(d\exp_x)_{te_\dm}(e_\dm)=e_\dm,
\]
by using \eqref{eqn:calA-exp} we can express
\begin{equation}
\label{eqn:Jexp-A}
J(\exp_x)(te_\dm)=\det( (d\exp_x)_{te_\dm})=\det\begin{bmatrix}
\frac{\mathcal{A}(t)}{t}&0\\[5pt]
0&1
\end{bmatrix}=\det(\mathcal{A}(t)/t)=\frac{\det\mathcal{A}(t)}{t^{\dm-1}}.
\end{equation}

For manifolds with bounded sectional curvatures, the Jacobian $J(\exp_x)$ can be bounded below and above as follows (the results are presented in Chavel's book \cite{Chavel2006}, and we will simply list the results from there). 

\begin{theorem}(\cite[Theorems III.4.1 and III.4.3]{Chavel2006}) 
\label{lemma:Chavel-thms}
Suppose the sectional curvatures of $M$ satisfy, for all $x \in M$ and all two-dimensional subspaces $\sigma \subset T_x M$, 
\begin{equation}
-c_m\leq\calK(x;\sigma)\leq -c_M<0,
\label{eqn:const-bounds}
\end{equation}
where $c_m$ and $c_M$ are positive constants. Then, for any $x \in M$ and a unit-speed geodesic $\gamma$ through $x$, it holds that 
\[
\left(\frac{\sinh(\sqrt{c_M}t)}{\sqrt{c_M}}\right)^{\dm-1}\leq \det\mathcal{A}(t)\leq \left(\frac{\sinh(\sqrt{c_m}t)}{\sqrt{c_m}}\right)^{\dm-1}.
\]
Since the geodesic $\gamma$ is arbitrary, we also have (see \eqref{eqn:Jexp-A}):
\[
\left(
\frac{\sinh(\sqrt{c_M}\|u\|)}{\sqrt{c_M}\|u\|}
\right)^{\dm-1}\leq |J(\exp_x)(u)|\leq\left(
\frac{\sinh(\sqrt{c_m}\|u\|)}{\sqrt{c_m}\|u\|}
\right)^{\dm-1},
\]
for all $u \in T_x M$.
\end{theorem}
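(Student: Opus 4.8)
The plan is to derive both chains of inequalities from the Rauch comparison theorem (Theorem~\ref{RCT}), applied twice: once comparing $M$ against the model space $M_{-c_M}$ of constant curvature $-c_M$, and once against the model space $M_{-c_m}$ of constant curvature $-c_m$. The starting point is that each column $J_j$ of $\calA(t)$ is a Jacobi field along $\gamma$ with $J_j(0)=0$ and $J_j'(0)=e_j$, so any linear combination $J_w:=\sum_{j=1}^{\dm-1}w_j J_j$, with $w=(w_1,\dots,w_{\dm-1})$, is again a normal Jacobi field along $\gamma$ with $J_w(0)=0$ and $J_w'(0)=\sum_j w_j e_j$. Writing the $J_j(t)$ in the parallel orthonormal frame of $\gamma'(t)^\perp$ obtained from $e_1,\dots,e_{\dm-1}$, the matrix $\calA(t)$ acts by $\calA(t)w=J_w(t)$, and since parallel transport is an isometry, $\|\calA(t)w\|=\|J_w(t)\|$. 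Thus it suffices to bound $\|J_w(t)\|$ above and below for every unit vector $w$.

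Fix such a unit $w$, viewed as a unit vector in $\gamma'(0)^\perp\subset T_xM$, and pick a unit-speed curve $v$ on the unit sphere of $T_xM$ with $v(0)=\gamma'(0)$ and $v'(0)=w$ (possible since automatically $w\perp\gamma'(0)$). The ``fan of geodesics'' $c(s)=\exp_x(t\,v(s))$ satisfies $c'(0)=(d\exp_x)_{t\gamma'(0)}(tw)=J_w(t)$ by the standard identity $J_w(t)=(d\exp_x)_{t\gamma'(0)}(tw)$ used in deriving \eqref{eqn:calA-exp}. Transporting $c$ to a constant-curvature model via $\exp_{\tilde p}\circ i\circ\exp_x^{-1}$ produces the analogous fan there, whose velocity at $s=0$ is the corresponding model Jacobi field, of norm $\sinh(\sqrt{-\kappa}\,t)/\sqrt{-\kappa}$ because $|i(w)|=1$. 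Applying Theorem~\ref{RCT} on an arc $[0,\delta]$ and dividing the resulting length inequality by $\delta$ as $\delta\to0$ upgrades it to a pointwise bound on $\|c'(0)\|$. Using $M$ as the base manifold and $M_{-c_M}$ as $\tilde M$ (legitimate since the curvature of $M_{-c_M}$ is $-c_M\ge\calK$, the hypothesis on $M$) gives $\|J_w(t)\|\ge\sinh(\sqrt{c_M}\,t)/\sqrt{c_M}$; swapping the roles, with $M_{-c_m}$ as base and $M$ as $\tilde M$ (legitimate since the curvature of $M$ is $\ge -c_m$) gives $\|J_w(t)\|\le\sinh(\sqrt{c_m}\,t)/\sqrt{c_m}$. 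All exponential maps involved are nonsingular everywhere (Cartan--Hadamard, resp. complete simply connected constant negative curvature), so Rauch applies for every $t>0$.

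From here the determinant estimate is immediate: $\min_{|w|=1}\|\calA(t)w\|$ and $\max_{|w|=1}\|\calA(t)w\|$ are the extreme singular values of $\calA(t)$, so every singular value lies in $\left[\sinh(\sqrt{c_M}\,t)/\sqrt{c_M},\ \sinh(\sqrt{c_m}\,t)/\sqrt{c_m}\right]$, and $|\det\calA(t)|$, being the product of the $\dm-1$ singular values, is squeezed between the $(\dm-1)$-st powers of these bounds. Since $\calA(t)\sim t\,I$ as $t\to0$ and $\calA(t)$ never becomes singular (no conjugate points on a Cartan--Hadamard manifold), $\det\calA(t)>0$ and the modulus can be dropped; this is the first displayed estimate. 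The bound on $J(\exp_x)(u)$ then follows by setting $t=\|u\|$ in the identity $J(\exp_x)(u)=\det\calA(\|u\|)/\|u\|^{\dm-1}$ from \eqref{eqn:Jexp-A}, applied to the geodesic with $\gamma'(0)=u/\|u\|$; this is allowed because $\gamma$ (hence $e_\dm$) was arbitrary.

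The step I expect to demand the most care is the conversion of Rauch's length comparison into the pointwise Jacobi-field bounds: one must build the geodesic fan so that its variation field along $\gamma$ is exactly $J_w$, check the orthogonality $w\perp\gamma'(0)$ that makes $(d\exp_x)_{t\gamma'(0)}(tw)$ a genuine normal Jacobi field, and---crucially---keep straight which manifold plays the role of ``$M$'' and which of ``$\tilde M$'' in Rauch, since the direction of the curvature hypothesis forces the \emph{more} negatively curved space to be the base in one application and the \emph{less} negatively curved one to be the base in the other (hence the role swap between the two bounds). A less geometric alternative is a scalar Riccati comparison for $u(t)=(\log\det\calA(t))'=\operatorname{tr}\!\big(\calA'(t)\calA(t)^{-1}\big)$, combining the Riccati equation, the Cauchy--Schwarz bound $\big(\operatorname{tr}\calA'\calA^{-1}\big)^2\le(\dm-1)\operatorname{tr}\!\big((\calA'\calA^{-1})^2\big)$, the Ricci bounds $-(\dm-1)c_m\le\operatorname{Ric}(\gamma',\gamma')\le-(\dm-1)c_M$, and the asymptotics $u(t)\sim(\dm-1)/t$; this handles the upper bound cleanly, but the lower bound then requires the full matrix (rather than scalar) Riccati or index-form comparison, which is why the two-sided Rauch argument above is the more symmetric route.
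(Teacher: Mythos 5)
Your argument is correct, but it is worth noting that the paper gives no proof of this statement at all: it simply cites Chavel's Theorems III.4.1 and III.4.3. What you have supplied is a self-contained derivation from the curve-length form of the Rauch comparison theorem that the paper already records as Theorem \ref{RCT}, and the chain of reductions is sound: $\calA(t)w=J_w(t)$ in the parallel orthonormal frame, the geodesic fan $c(s)=\exp_x(t\,v(s))$ realizes $J_w(t)$ as $c'(0)$, the length inequality on $[0,\delta]$ divided by $\delta$ yields the pointwise bound $\|c'(0)\|\gtrless\|\tilde c'(0)\|$, the two applications of Rauch with the roles of base and comparison space swapped give the two-sided bound on $\|J_w(t)\|$, and the singular-value squeeze (extreme singular values of $\calA(t)$ are $\min$ and $\max$ of $\|\calA(t)w\|$ over unit $w$, and $\det\calA(t)>0$ by the absence of conjugate points plus $\calA(t)\sim tI$) converts this into the determinant estimate. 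The one respect in which your route is genuinely narrower than Chavel's is the upper bound: Chavel's Theorem III.4.3 obtains $\det\calA(t)\leq\bigl(\sinh(\sqrt{c_m}t)/\sqrt{c_m}\bigr)^{\dm-1}$ under only a Ricci lower bound, via the scalar Riccati comparison for $\bigl(\log\det\calA\bigr)'$ — a point the paper itself flags in the remark following the theorem — whereas your singular-value argument needs the full sectional-curvature hypothesis. You correctly identify this trade-off in your closing paragraph; since the theorem as stated assumes sectional bounds, your argument suffices for the statement at hand.
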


\begin{remark}
\normalfont
We note that for the upper bound on $\det\mathcal{A}(t)$ a weaker condition on curvature can be assumed, namely that the Ricci curvature along $\gamma$ is greater than or equal to $-(\dm-1) c_m$ (see Theorem III.4.3 in \cite{Chavel2006}). We do not consider this weaker assumption here.
\end{remark}

A point on a manifold is called a {\em pole} if the exponential map at that point is a global diffeomorphism. On Cartan-Hadamard manifolds, all points are poles. From here on, we fix a point $\p$ on $M$ which we will be referring to as the pole of the manifold. We also make the notation
\[
\theta_x := \dist(o,x), \qquad \text{ for all } x \in M.
\]

The bounds in Theorem \ref{lemma:Chavel-thms} result in bounds on the volume of geodesic balls in $M$. To explain these results we will use notation  $\calA(t; u)$ to indicate the dependence of $\calA$ on the unit tangent vector $u$. Denote by $\bbs^{\dm -1}$ the unit sphere in $\bbr^\dm$ and by $\omega(\dm)$ the volume of the unit ball in $\bbr^\dm$.  By using geodesic spherical coordinates centred at $\p$, the Riemannian measure on $M$ is given by (see \cite[Theorem III.3.1]{Chavel2006})
\begin{equation}
\label{eqn:vol-form}
\d x(\exp_\p(t u)) = \det\mathcal{A}(t;u) \d t \d \sigma(u),
\end{equation}
where $\d \sigma(u)$ denotes the Riemannian measure on $\bbs^{\dm -1}$. 

Denote by $B_\theta(\p)$ the open geodesic ball centred at $\p$ of radius $\theta$, and by $|B_\theta(\p)|$ its volume.
\begin{corollary}(\cite[Theorems III.4.2 and III.4.4]{Chavel2006})
\label{cor:AV-bounds}
Suppose the sectional curvatures of $M$ satisfy \eqref{eqn:const-bounds}. Then, 
\[
 \dm w(\dm)\int_0^\theta \left(\frac{\sinh(\sqrt{c_M}t)}{\sqrt{c_M}}\right)^{\dm-1} \d t \leq |B_{\theta}(\p)|\leq \dm w(\dm) \int_0 ^\theta \left(\frac{\sinh(\sqrt{c_m} t)}{\sqrt{c_m}}\right)^{\dm-1} \d t.
\]
\end{corollary}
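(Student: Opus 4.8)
The plan is to integrate the pointwise bounds of Theorem \ref{lemma:Chavel-thms} against the volume form \eqref{eqn:vol-form}. Since $M$ is a Cartan-Hadamard manifold, $\exp_\p$ is a global diffeomorphism, so geodesic spherical coordinates centred at $\p$ cover $M \setminus \{\p\}$, and $\{\p\}$ has zero Riemannian measure. Hence, using \eqref{eqn:vol-form},
\[
|B_\theta(\p)| = \int_{\bbs^{\dm-1}} \int_0^\theta \det\mathcal{A}(t;u)\, \d t\, \d\sigma(u),
\]
where $\d\sigma$ is the Riemannian (surface) measure on the unit sphere $\bbs^{\dm-1} \subset \bbr^\dm$, which has total mass $\sigma(\bbs^{\dm-1}) = \dm\, w(\dm)$.

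Next I would invoke the bounds from Theorem \ref{lemma:Chavel-thms}: for every unit $u \in T_\p M$ and every $t \geq 0$,
\[
\left(\frac{\sinh(\sqrt{c_M}\,t)}{\sqrt{c_M}}\right)^{\dm-1} \leq \det\mathcal{A}(t;u) \leq \left(\frac{\sinh(\sqrt{c_m}\,t)}{\sqrt{c_m}}\right)^{\dm-1}.
\]
These bounds are independent of the direction $u$, so integrating first in $t$ over $[0,\theta]$ and then in $u$ over $\bbs^{\dm-1}$ (the iterated integrals being justified by nonnegativity of the integrand and Tonelli's theorem) pulls the constant $\dm\, w(\dm)$ out and yields exactly
\[
\dm\, w(\dm)\int_0^\theta \left(\frac{\sinh(\sqrt{c_M}\,t)}{\sqrt{c_M}}\right)^{\dm-1} \d t \leq |B_\theta(\p)| \leq \dm\, w(\dm)\int_0^\theta \left(\frac{\sinh(\sqrt{c_m}\,t)}{\sqrt{c_m}}\right)^{\dm-1} \d t,
\]
which is the claim.

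There is no genuine obstacle here; the statement is essentially a corollary-by-integration of Theorem \ref{lemma:Chavel-thms}, and the only points requiring a word of care are (i) that geodesic spherical coordinates are globally valid on a Cartan-Hadamard manifold, so \eqref{eqn:vol-form} can be used to compute $|B_\theta(\p)|$ over all of $M$, and (ii) the normalization identity $\sigma(\bbs^{\dm-1}) = \dm\, w(\dm)$ relating the surface area of the unit sphere to the volume $w(\dm)$ of the unit ball in $\bbr^\dm$. Both are standard, so the proof is short.
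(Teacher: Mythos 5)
Your proof is correct and follows exactly the paper's argument: the paper likewise writes $|B_\theta(\p)| = \int_0^\theta \int_{\bbs^{\dm-1}} \det\mathcal{A}(t;u)\, \d\sigma(u)\, \d t$ via \eqref{eqn:vol-form} and applies the pointwise bounds of Theorem \ref{lemma:Chavel-thms}. The extra remarks on Tonelli, the global validity of geodesic spherical coordinates on a Cartan-Hadamard manifold, and the identity $\sigma(\bbs^{\dm-1}) = \dm\, w(\dm)$ are fine but the paper treats them as implicit.
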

\begin{proof} The proof follows immediately from Theorem \ref{lemma:Chavel-thms} together with \eqref{eqn:vol-form}, by writing
\[
| B_\theta(\p)| = \int_0^\theta \int_{\bbs^{\dm-1}} \det\mathcal{A}(t;u) \d \sigma(u) \d t.
\]
\end{proof}
We also note that the two bounds in Corollary \ref{cor:AV-bounds} represent the volume of the ball of radius $\theta$ in the hyperbolic space $\bbh^\dm$ of constant curvatures $-c_M$, and $-c_m$, respectively.

%%%%%%%%%%

\section{Nonexistence of a global minimizer }
\label{sec:non-existence}

We will show that the energy functional \eqref{eqn:energy} does not admit global minimizers when either the diffusion or the attraction are too strong. In this section we  assume that the sectional curvatures satisfy, for all $x \in M$ and all sections $\sigma \subset T_x M$, 
\begin{equation}
\label{eqn:K-upperb}
\calK(x;\sigma)\leq -c_M \leq 0,
\end{equation}
for some constant $c_M$. 

Fix a pole $\p \in M$ and consider the following family of probability density functions $\rho_R$ that depend on $R>0$:
\begin{equation}
\label{eqn:rhoR}
\rho_{R}(x)=\begin{cases}
\displaystyle\frac{1}{|B_R(\p)|},&\quad\text{when }x\in B_R(\p),\\[10pt]
0,&\quad\text{otherwise}.
\end{cases}
\end{equation}
We will estimate $E[\rho_R]$ in two limits: $R \to 0^+$ (blow-up) and $R \to \infty$ (spreading). 

From \eqref{eqn:rhoR}, the entropy component of the energy $E[\rho_R]$ is 
\begin{equation}
\label{eqn:SrhoR}
\int_M \rho_{R}(x)\log\rho_{R}(x)\d x=-\log|B_R(\p)|.
\end{equation}
By the volume comparison theorem (see Corollary \ref{cor:AV-bounds} part i)), when $c_M>0$, the volume $|B_R(\p)|$ can be estimated as
\begin{equation}
\label{eqn:vol-lowb}
|B_R(\p)| \geq \dm w(\dm) \int_0^R \left(\frac{\sinh(\sqrt{c_M}\theta)}{\sqrt{c_M}}\right)^{\dm-1}\d \theta.
\end{equation}
Note that when $c_M=0$, \eqref{eqn:vol-lowb} reduces to
\begin{equation}
\label{eqn:vol-lowb-0}
|B_R(\p)| \geq \dm w(\dm) \int_0^R \theta^{\dm-1}\d \theta = w(\dm) R^\dm,
\end{equation}
which says that $|B_R(\p)|$ is bounded below by the volume of the ball of radius $R$ in $\bbr^\dm$. To estimate further  $|B_R(\p)|$ when $c_M >0$, we introduce the following lemma.

\begin{lemma}\label{SIEQ}
For any $0<\epsilon<1$, there exists $\beta_\epsilon>0$ such that
\[
\frac{\sinh x}{x}\geq \beta_\epsilon e^{x(1-\epsilon)}, \qquad\forall x>0.
\]
\end{lemma}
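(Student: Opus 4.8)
The plan is to recast the inequality as a statement about the infimum of a single function. Set
\[
f_\epsilon(x) := \frac{\sinh x}{x}\, e^{-(1-\epsilon)x}, \qquad x>0,
\]
so that the assertion $\frac{\sinh x}{x} \geq \beta_\epsilon e^{(1-\epsilon)x}$ for all $x>0$ is equivalent to $\inf_{x>0} f_\epsilon(x) > 0$; one then simply takes $\beta_\epsilon := \inf_{x>0} f_\epsilon(x)$ (or any smaller positive number). Thus everything reduces to showing that this continuous, strictly positive function stays bounded away from $0$ on $(0,\infty)$.

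First I would record the two limiting behaviours. As $x\to 0^+$, $\sinh x / x \to 1$ and $e^{-(1-\epsilon)x}\to 1$, so $f_\epsilon$ extends continuously to $x=0$ with $f_\epsilon(0)=1$. As $x\to\infty$, writing $\sinh x = \tfrac12(e^x-e^{-x})$ gives
\[
f_\epsilon(x) = \frac{1-e^{-2x}}{2}\cdot\frac{e^{\epsilon x}}{x} \;\longrightarrow\; +\infty,
\]
since $\epsilon>0$. Consequently there is $R>0$ with $f_\epsilon(x)\geq 1$ for all $x\geq R$, while on the compact interval $[0,R]$ the continuous, strictly positive function $f_\epsilon$ attains a positive minimum $m>0$. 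Setting $\beta_\epsilon := \min\{1,m\} > 0$ then yields $f_\epsilon(x)\geq\beta_\epsilon$ for all $x>0$, which is the claim.

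If instead one wants an explicit constant, I would split at $x=1$. For $0<x\leq 1$, use $\sinh x / x \geq 1$ and $e^{(1-\epsilon)x}\leq e$, so $\frac{\sinh x}{x}\geq 1 \geq \frac1e\, e^{(1-\epsilon)x}$. For $x\geq 1$, use $\sinh x \geq \tfrac12(1-e^{-2})e^x$ together with the elementary bound $e^{\epsilon x}/x \geq e\epsilon$ (the minimum of $x\mapsto e^{\epsilon x}/x$, attained at $x=1/\epsilon$), which gives
\[
\frac{\sinh x}{x} \;\geq\; \frac{1-e^{-2}}{2}\cdot\frac{e^{\epsilon x}}{x}\cdot e^{(1-\epsilon)x} \;\geq\; \frac{e\epsilon(1-e^{-2})}{2}\, e^{(1-\epsilon)x}.
\]
Hence $\beta_\epsilon = \min\bigl\{e^{-1},\, \tfrac12 e\epsilon(1-e^{-2})\bigr\}$ works. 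There is no genuine obstacle in this argument; the only place the hypothesis on $\epsilon$ enters is through $\epsilon>0$, which is exactly what forces $f_\epsilon(x)\to\infty$ at infinity (the upper bound $\epsilon<1$ is merely the range relevant for the application in \eqref{eqn:vol-lowb}).
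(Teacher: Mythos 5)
Your argument is correct and is essentially the paper's own proof: both reduce the claim to showing that $\frac{\sinh x}{x}e^{-(1-\epsilon)x}$ has a positive infimum on $(0,\infty)$ by checking that it tends to $1$ as $x\to 0^+$ and to $+\infty$ as $x\to\infty$. The explicit constant $\beta_\epsilon=\min\bigl\{e^{-1},\tfrac12 e\epsilon(1-e^{-2})\bigr\}$ you supply is a correct bonus but not needed for the application.
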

\begin{proof}
Fix $\epsilon\in (0, 1)$. 
%Then, we have
%\[
%\frac{\sinh x}{x e^{(1-\epsilon)x}}=\frac{e^{\epsilon x}-e^{-(2-\epsilon)x}}{2x}.
%\]
Since
\[
\lim_{x\to0}\frac{\sinh x}{x e^{(1-\epsilon)x}}=1,\quad \text{ and } \quad \lim_{x\to\infty}\frac{\sinh x}{x e^{(1-\epsilon)x}}=\infty,
\]
there exists the global minimum $\beta_\epsilon>0$ of
\[
\frac{\sinh x}{x e^{(1-\epsilon)x}},\qquad \text{ for } x\in(0, \infty).
\]
This shows the desired result.
\end{proof}
%From Lemma \ref{SIEQ}, we get
%\[
%\left(\frac{\sinh(\sqrt{c_M}\theta)}{\sqrt{c_M}}\right)^{\dm-1}\geq \beta_\epsilon^{\dm-1} \theta^{\dm-1} e^{\sqrt{c_M}(\dm-1)(1-\epsilon)\theta}.
%\]

Define $p_a(R)$ as 
\[
p_a(R)=e^{-aR}\int_0^R\theta^{\dm-1}e^{a\theta}\d \theta.
\]
With $\epsilon\in (0, 1)$ fixed, use \eqref{eqn:vol-lowb} and Lemma \ref{SIEQ}, to estimate:
\begin{align}
\label{eqn:estcMneq0}
|B_R(\p)|&
%\int_0^R \dm w(\dm)\left(\frac{\sinh(\sqrt{c_M}\theta)}{\sqrt{c_M}}\right)^{\dm-1}d\theta
\geq \int_0^R \dm w(\dm) \beta_\epsilon^{\dm-1} \theta^{\dm-1} e^{\sqrt{c_M}(\dm-1)(1-\epsilon)\theta}\d\theta  \nonumber \\
&= \dm w(\dm) \beta_\epsilon^{\dm-1} e^{aR}p_a(R),
\end{align}
where $a=\sqrt{c_M}(\dm-1)(1-\epsilon)$. Note that we use \eqref{eqn:estcMneq0} only for $c_M>0$, as for $c_M=0$ we simply use \eqref{eqn:vol-lowb-0}. Therefore,  \eqref{eqn:SrhoR} together with \eqref{eqn:vol-lowb-0} and \eqref{eqn:estcMneq0}, yield
\begin{align}
\label{eqn:Sbound}
& \int_M \rho_R(x)\log\rho_R(x)\d x \leq  \\
& \hspace{1cm}
\begin{cases}
-\log(\dm w(\dm) \beta_\epsilon^{\dm-1} )-\sqrt{c_M}(\dm-1)(1-\epsilon)R-\log p_a(R), \qquad&\text{ when }c_M>0, \\[5pt]
-\log(w(\dm)) - \dm\log R, \qquad&\text{ when } c_M=0.
\end{cases}
\end{align}

On the other hand, since the support of $\rho_R$ is $B_R(\p)$, 
\[
\sup_{x, y\in\mathrm{supp}(\rho_R)}\dist(x, y)=2R,
\]
and this allows us to estimate the interaction component of the energy $E[\rho_R]$ as
\begin{equation}
\label{eqn:Ibound}
\frac{1}{2}\iint_{M \times M} h(\dist(x, y))\rho_R(x)\rho_R(y)\d x\d y\leq \frac{1}{2}\iint_{M \times M} h(2R)\rho_R(x)\rho_R(y)\d x \d y=\frac{h(2R)}{2}.
\end{equation}

Finally, combine \eqref{eqn:Sbound} and \eqref{eqn:Ibound} to get
%\begin{align}
%\label{eqn:Ebound}
%&E[\rho_R] \leq \\
%& \begin{cases}
%-\log(\dm w(\dm) \beta_\epsilon^{\dm-1} )-\sqrt{c_M}(\dm-1)(1-\epsilon)R-\log p_a(R) + \frac{h(2R)}{2},  \qquad&\text{ when }c_M>0, \\[5pt]
%-\log(w(\dm)) - \dm\log R \frac{h(2R)}{2}, \qquad&\text{ when } c_M=0.
%\end{cases}
%\end{align}
\begin{equation}
\label{eqn:Ebound}
E[\rho_R] \leq 
\begin{cases}
\displaystyle -\log(\dm w(\dm) \beta_\epsilon^{\dm-1} )-\sqrt{c_M}(\dm-1)(1-\epsilon)R-\log p_a(R) + \frac{h(2R)}{2},  \qquad&\text{ when }c_M>0, \\[7pt]
\displaystyle -\log(w(\dm)) - \dm\log R + \frac{h(2R)}{2}, \qquad&\text{ when } c_M=0.
\end{cases}
\end{equation}
Note that \eqref{eqn:Ebound} holds for any $\epsilon\in (0, 1)$ fixed.

The goal is to investigate the limits $R\to 0^+$ and $R\to \infty$, respectively. If the right-hand-side of \eqref{eqn:Ebound} goes to $-\infty$ in either of these limits, then 
no global minimizers of the free energy exist.
%%%%%

\subsection{Behavior at zero}\label{subsec:nonexist-blowup}
This case deals with local behaviour, so curvature does not play an essential role.  For this reason, we assume for this part that $c_M = 0$, so this result also applies to the Euclidean case $M= \bbr^\dm$.  The result is the following.

\begin{proposition}[Nonexistence by blow-up]
\label{prop:blowup}
Assume the sectional curvatures of $M$ satisfy $\calK\leq0$ and $h$ is a non-decreasing lower semi-continuous function which satisfies 
\[
\limsup_{\theta\to0+} \left(h(\theta)-A_1\log\theta \right) < \infty,
\]
for some $A_1>2\dm$. Then, there exists no global minimizer of the energy functional \eqref{eqn:energy}.
\end{proposition}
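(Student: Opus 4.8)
The plan is to use the explicit family of densities $\rho_R$ defined in \eqref{eqn:rhoR} and show that the upper bound on $E[\rho_R]$ derived in \eqref{eqn:Ebound} tends to $-\infty$ as $R \to 0^+$; since each $\rho_R$ is an admissible element of $\calP_{ac}(M)$, this forces $\inf_{\calP_{ac}(M)} E = -\infty$, so no global minimizer can exist. Because we have set $c_M = 0$ for this local statement, the relevant estimate is the second branch of \eqref{eqn:Ebound}, namely
\[
E[\rho_R] \leq -\log(w(\dm)) - \dm \log R + \frac{h(2R)}{2}.
\]

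First I would rewrite the right-hand side to expose the $\log \theta$ behaviour near $\theta = 0$: substituting $\theta = 2R$, we have $-\dm \log R = -\dm \log \theta + \dm \log 2$, so
\[
E[\rho_R] \leq \frac{1}{2}\left( h(2R) - 2\dm \log(2R) \right) - \log(w(\dm)) + \dm \log 2.
\]
Next I would invoke the hypothesis $\limsup_{\theta \to 0+}(h(\theta) - A_1 \log \theta) < \infty$ for some $A_1 > 2\dm$. Writing $h(\theta) - 2\dm \log\theta = (h(\theta) - A_1 \log\theta) + (A_1 - 2\dm)\log\theta$, the first term stays bounded above as $\theta \to 0^+$ by assumption, while the second term tends to $-\infty$ because $A_1 - 2\dm > 0$ and $\log\theta \to -\infty$. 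Hence $h(\theta) - 2\dm \log\theta \to -\infty$ as $\theta \to 0^+$, and therefore the displayed upper bound on $E[\rho_R]$ tends to $-\infty$ as $R \to 0^+$.

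The conclusion then follows: if a global minimizer $\bar\rho \in \calP_{ac}(M)$ existed, we would have $E[\bar\rho] \le E[\rho_R]$ for all $R > 0$, contradicting $E[\rho_R] \to -\infty$. I should be mildly careful about one technical point: I need $E[\rho_R]$ to be well-defined (i.e., the entropy and interaction integrals are not an $\infty - \infty$ indeterminate form). The entropy term equals $-\log|B_R(\p)|$, which is finite; for the interaction term, $h$ is non-decreasing with values in $[-\infty,\infty)$, so $h(\dist(x,y)) \le h(2R) < \infty$ on the support of $\rho_R \otimes \rho_R$, giving a finite upper bound, and the integral is thus well-defined in $[-\infty, \infty)$ — which is all that is needed for the one-sided estimate \eqref{eqn:Ibound} and hence \eqref{eqn:Ebound}. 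I do not expect any serious obstacle here; the only thing to watch is bookkeeping in passing from \eqref{eqn:Ebound} to the $\log\theta$-form and correctly using the strict inequality $A_1 > 2\dm$ to extract divergence rather than mere boundedness. This is essentially the manifold analogue of the Euclidean blow-up argument in \cite[Theorem 4.1]{CaDePa2019}, specialized to the flat lower bound on volume growth \eqref{eqn:vol-lowb-0}.
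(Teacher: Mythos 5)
Your proposal is correct and follows essentially the same route as the paper: both use the test family $\rho_R$ with the $c_M=0$ branch of \eqref{eqn:Ebound} and exploit $A_1>2\dm$ to force $E[\rho_R]\to-\infty$ as $R\to 0^+$; the only cosmetic difference is that the paper first normalizes $h$ so that $h(\theta)\leq A_1\log\theta$ near zero, whereas you decompose $h(\theta)-2\dm\log\theta=(h(\theta)-A_1\log\theta)+(A_1-2\dm)\log\theta$ directly. Your added remark on the well-definedness of $E[\rho_R]$ is a harmless and sensible precaution.
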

\begin{proof}
Since the interaction potential is defined up to a constant, we can assume without loss of generality that 
\[
\limsup_{\theta\to0+} \left(h(\theta)-A_1\log\theta \right) \leq 0.
\]
Then, there exists $R_1>0$ such that
\[
h(\theta)\leq A_1\log \theta, \qquad\forall0<\theta\leq R_1.
\]
From \eqref{eqn:Ebound} (case $c_M=0$), we then get
\begin{align*}
E[\rho_R]&\leq -\log(w(\dm))-\dm\log R+\frac{A_1}{2}\log(2R)\\
&=-\log(w(\dm))+\frac{A_1}{2}\log 2+\left(\frac{A_1}{2}-\dm\right)\log R,
\qquad \forall R\in\left(0, R_1/2\right].
\end{align*}
Since $\frac{A_1}{2}-\dm>0$, we find
\[
\lim_{R\to0+}E[\rho_{R}]=-\infty.
\]
\end{proof}

%%%%%

\subsection{Behavior at infinity}\label{subsec:nonexist-spread}
We assume here that $c_M>0$. By L'H\^opital's rule,
\[
\lim_{R\to\infty}\frac{p_a(R)}{R^{\dm-1}}=\lim_{R\to \infty}\frac{\int_0^R \theta^{\dm-1}e^{a\theta}d\theta}{R^{\dm-1}e^{aR}}=\lim_{R\to \infty}\frac{R^{\dm-1}e^{aR}}{(\dm-1) R^{\dm-2}e^{aR}+aR^{\dm-1}e^{aR}}=\frac{1}{a}.
\]
Hence, $-\log p_a(R)$ in the right-hand-side of \eqref{eqn:Ebound} decays logarithmically to $-\infty$ as $R \to \infty$, and therefore it is subdominated by the linear decay of $-\sqrt{c_M}(\dm-1)(1-\epsilon)R$. Nonexistence of minimizers (by spreading) will occur if the attraction term is not strong enough at infinity to control this linear term. The precise result is given by the following proposition.

\begin{proposition}[Nonexistence by spreading]
\label{prop:spread}
Assume the sectional curvatures of $M$ satisfy $\calK\leq -c_M<0$ and $h$ is a non-decreasing lower semi-continuous function which satisfies 
\[
\limsup_{\theta\to\infty} \left(h(\theta)-A_2 \sqrt{c_M} \, \theta \right) < \infty,
\]
for some $A_2<\dm-1$. Then, there exists no global minimizer of the energy functional \eqref{eqn:energy}.
\end{proposition}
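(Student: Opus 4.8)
The plan is to use the same family of test densities $\rho_R$ defined in \eqref{eqn:rhoR} and send $R \to \infty$ in the estimate \eqref{eqn:Ebound}, in the case $c_M > 0$. The assumption on $h$ gives, after normalizing the potential by a constant (which does not affect existence of minimizers), a threshold $R_2 > 0$ with $h(\theta) \leq A_2 \sqrt{c_M}\, \theta$ for all $\theta \geq R_2$. Plugging $h(2R) \leq 2 A_2 \sqrt{c_M}\, R$ into the first branch of \eqref{eqn:Ebound}, I obtain, for all $R$ large enough,
\[
E[\rho_R] \leq -\log\left(\dm\, w(\dm)\, \beta_\epsilon^{\dm-1}\right) - \sqrt{c_M}(\dm-1)(1-\epsilon) R - \log p_a(R) + A_2 \sqrt{c_M}\, R.
\]
So the $R$-dependence is controlled by $\left(A_2 - (\dm-1)(1-\epsilon)\right)\sqrt{c_M}\, R - \log p_a(R)$.

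The key step is the choice of $\epsilon$. Since $A_2 < \dm - 1$ by hypothesis, we have $\frac{A_2}{\dm-1} < 1$, so one can pick $\epsilon \in (0,1)$ small enough that $A_2 < (\dm-1)(1-\epsilon)$ still holds; fix such an $\epsilon$ (recall \eqref{eqn:Ebound} is valid for every $\epsilon \in (0,1)$). Then the coefficient of the linear term, $\left(A_2 - (\dm-1)(1-\epsilon)\right)\sqrt{c_M}$, is strictly negative, so that contribution tends to $-\infty$ linearly in $R$. By the L'Hôpital computation recorded just before the proposition, $p_a(R) \sim R^{\dm-1}/a$ as $R \to \infty$, hence $-\log p_a(R)$ grows only like $-(\dm-1)\log R$, which is dominated by the linear term. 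Therefore the right-hand side above tends to $-\infty$ as $R \to \infty$, giving $\lim_{R \to \infty} E[\rho_R] = -\infty$, and consequently $E$ has no global minimizer in $\calP_{ac}(M)$.

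The only mild subtlety — and the "main obstacle," though it is minor — is bookkeeping the two regimes of $h$: the bound $h(\theta) \leq A_2\sqrt{c_M}\,\theta$ is only assumed near infinity, so one must note that on the bounded range $\theta \in [0, R_2]$ the monotone lower semi-continuous function $h$ is bounded above by $h(R_2) < \infty$ (or one simply works with $R \geq R_2/2$ so that $2R \geq R_2$ and the asymptotic bound applies directly to $h(2R)$), and that this finite correction does not affect the limit. Everything else is a direct substitution into \eqref{eqn:Ebound} followed by the elementary limit argument above.
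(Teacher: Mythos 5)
Your proposal is correct and follows essentially the same route as the paper: the paper likewise normalizes $h$ so that $h(\theta)\leq A_2\sqrt{c_M}\,\theta$ for $\theta\geq R_2$, substitutes into \eqref{eqn:Ebound}, and picks $\epsilon\in(0,1)$ with $(\dm-1)(1-\epsilon)-A_2>0$ to force $E[\rho_R]\to-\infty$. The only cosmetic difference is that the paper simply discards the term $-\log p_a(R)\leq 0$ for large $R$, whereas you keep it and observe that its logarithmic growth is dominated by the linear term; both are valid.
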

\begin{proof}
By adding a constant to the interaction potential, we can assume without loss of generality that
\[
\limsup_{\theta\to\infty} \left(h(\theta)-A_2 \sqrt{c_M} \, \theta \right) \leq 0.
\]
Hence, there exists $R_2\geq0$ such that 
\[
h(\theta)\leq A_2 \sqrt{c_M} \, \theta,\qquad \forall \theta\geq R_2.
\]
From \eqref{eqn:Ebound}, we then find
\begin{align}
\label{eqn:ER-spread}
E[\rho_R]&\leq  -\log(\dm w(\dm) \beta_\epsilon^{\dm-1} )-\sqrt{c_M}(\dm-1)(1-\epsilon)R-\log p_a(R) + A_2 \sqrt{c_M} R \nonumber \\[3pt]
& \leq -\log(\dm w(\dm) \beta_\epsilon^{\dm-1}) -\sqrt{c_M}((\dm-1)(1-\epsilon)-A_2)R, \qquad \forall R\geq R_2/2,
\end{align}
where for the second inequality we discarded the term $-\log p_a(R) \leq 0$ (for $R$ large).

Since $A_2< \dm-1 $, we can choose $0<\epsilon<1$ which satisfies
\[
(\dm-1)(1-\epsilon)-A_2>0.
\]
By \eqref{eqn:ER-spread}, this implies 
\[
\lim_{R\to\infty}E[\rho_R]=-\infty.
\]
\end{proof}

Theorem \ref{thm:nonexist} from Section \ref{sect:prelim} follows now from Propositions  \ref{prop:blowup} and \ref{prop:spread}.

%%%%%%%%%%

\section{Logarithmic HLS inequality on Cartan-Hadamard manifolds}\label{sec:HLS}
In this section, we derive a logarithmic HLS inequality on Cartan-Hadamard manifolds, which is a key tool in our studies. The starting point is the logarithmic HLS inequality on $\bbr^\dm$, given by the following theorem.
\begin{theorem}[Logarithmic HLS inequality on $\bbr^\dm$ \cite{CaDePa2019}]
\label{thm:HLS-Rd}
Let $\rho\in\mathcal{P}_{ac}(\bbr^\dm)$ satisfy $\log(1+|\cdot|^2)\rho\in L^1(\bbr^\dm)$. Then there exists $C_0\in \bbr$ depending only on $\dm$, such that
\[
-\int_{\bbr^\dm}\int_{\bbr^\dm}\log(|x-y|)\rho(x)\rho(y)\d x \d y \leq \frac{1}{\dm}\int_{\bbr^\dm} \rho(x)\log\rho(x) \d x+C_0.
\]
\end{theorem}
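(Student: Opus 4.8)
This is a classical inequality (Carlen--Loss \cite{CarlenLoss1992}; see also \cite{CaDePa2019}), so one legitimate option is simply to quote it. If instead one wants a self-contained derivation, the plan is to obtain it as the $\lambda \to 0^+$ limit of the sharp Hardy--Littlewood--Sobolev (HLS) inequality of Lieb. Write the desired bound as $-\iint \log|x-y|\,\rho(x)\rho(y)\,\d x\,\d y \le \frac1\dm \int \rho\log\rho\,\d x + C_0$, and assume without loss of generality that $\int\rho\log\rho < \infty$ and that $\rho\in L^{1+s}(\bbr^\dm)$ for all small $s>0$ (otherwise $\int\rho\log\rho = +\infty$ and there is nothing to prove; the hypothesis $\log(1+|\cdot|^2)\rho\in L^1$ is precisely what makes the left-hand side well defined and finite).

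First I would use the elementary pointwise inequality, valid for every $r>0$ and $\e>0$,
\[
-\log r \;\le\; \frac{r^{-2\e}-1}{2\e},
\]
which follows at once from $e^t\ge 1+t$ with $t=-2\e\log r$. Integrating against $\rho(x)\rho(y)$ gives
\[
-\iint \log|x-y|\,\rho(x)\rho(y)\,\d x\,\d y \;\le\; \frac{1}{2\e}\left(\iint |x-y|^{-2\e}\rho(x)\rho(y)\,\d x\,\d y - 1\right).
\]
Next, for $0<2\e<\dm$, apply the sharp HLS inequality $\iint |x-y|^{-\lambda}\rho(x)\rho(y)\,\d x\,\d y \le C(\dm,\lambda)\,\|\rho\|_{p_\lambda}^2$ with $\lambda=2\e$ and $p_\lambda = \tfrac{2\dm}{2\dm-\lambda}$. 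One then needs two first-order expansions as $\e\to0^+$: (i) $\|\rho\|_{p_{2\e}}^2 = 1 + \tfrac{2\e}{\dm}\int\rho\log\rho + o(\e)$, obtained by differentiating $s\mapsto \log\|\rho\|_{1+s}$ at $s=0$ (the derivative equals $\int\rho\log\rho$) and noting $p_{2\e}-1 = \tfrac{\e}{\dm}+O(\e^2)$; and (ii) $C(\dm,2\e) = 1 + 2\e\, c_\dm + o(\e)$, using $C(\dm,0)=1$ (the $\lambda=0$ case of sharp HLS is just $\|\rho\|_1^2 = 1$) together with the smooth dependence of Lieb's constant on $\lambda$. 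Substituting into the previous display makes the right-hand side $\tfrac1\dm\int\rho\log\rho + c_\dm + o(1)$; letting $\e\to0^+$ yields the claim with $C_0 = c_\dm = \lim_{\lambda\to0^+}\tfrac{C(\dm,\lambda)-1}{\lambda}$.

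The main obstacle is making these expansions rigorous for a fixed but arbitrary admissible $\rho$: the $o(\e)$ remainders depend on $\rho$, so one must justify differentiating the $L^{1+s}$-norm under only the stated integrability (e.g. truncating $\rho$ from above and below and passing to the limit by monotone/dominated convergence, using the $\log(1+|\cdot|^2)$-moment to control tails), and one needs $C(\dm,0)=1$ plus at least one-sided control of the difference quotient of $\lambda\mapsto C(\dm,\lambda)$ near $0$. An alternative, which additionally produces the sharp constant with the Gaussian as extremal density, is the competing-symmetries argument of Carlen--Loss: transport the scale- and translation-invariant functional $\tfrac1\dm\int\rho\log\rho + \iint\log|x-y|\,\rho\rho$ to $\bbs^\dm$ via stereographic projection, exploit its conformal invariance there, and alternate symmetric-decreasing rearrangement with the conformal inversion to drive a minimizing sequence to the rotationally symmetric fixed point; this route is heavier on compactness. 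I would present the first approach, as it needs less machinery and only qualitative information about Lieb's constant.
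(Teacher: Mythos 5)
The paper does not prove this statement: it is imported verbatim from \cite{CaDePa2019} (ultimately Carlen--Loss \cite{CarlenLoss1992}), so your first option --- simply quoting it --- is exactly what the authors do, and is sufficient for the purposes of this paper. Your sketched derivation via the $\lambda\to0^+$ limit of Lieb's sharp HLS inequality is the standard alternative route (going back to Beckner and Carlen--Loss), and the outline is sound: the pointwise bound $-\log r\le (r^{-2\e}-1)/(2\e)$, the first-order expansion of $\|\rho\|_{p_{2\e}}^2$ coming from $\frac{\d}{\d s}\log\|\rho\|_{1+s}\big|_{s=0}=\int\rho\log\rho\,\d x$, and the normalization $C(\dm,0)=1$ with smooth dependence of Lieb's constant on $\lambda$ (explicit from its Gamma-function formula) all check out, and together they yield the inequality with a non-sharp constant $C_0$ depending only on $\dm$, which is all the theorem asserts. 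One inaccuracy in the reduction: ``otherwise $\int\rho\log\rho=+\infty$'' is not valid, since finite entropy does not force $\rho\in L^{1+s}(\bbr^\dm)$ for small $s>0$ (for instance $\rho(x)\sim|x|^{-\dm}\bigl(\log(1/|x|)\bigr)^{-3}$ near the origin is integrable with finite entropy yet lies in no $L^{1+s}$); hence the truncation-and-limit step you flag as the ``main obstacle'' is genuinely required rather than optional. Since the theorem is used as a black box in everything that follows, none of this affects the rest of the paper's argument.
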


We will generalize Theorem \ref{thm:HLS-Rd} to Cartan-Hadamard manifolds with sectional curvatures bounded from below. For this purpose, fix a pole $\p\in M$ and denote by $f:M\to T_\p M$ the Riemannian logarithm map at $\p$, i.e.,
\begin{equation}
\label{eqn:R-log}
f(x)=\log_\p x,\qquad \text{ for all }x\in M.
\end{equation}
The inverse map of $f$, $f^{-1}:T_\p M\to M$ is the Riemannian exponential map
\[
f^{-1}(u)=\exp_\p u, \qquad \text{ for all } u\in T_\p M.
\]
On Cartan-Hadamard manifolds, the exponential map is a global diffeomorphism.

Take a density $\rho\in \mathcal{P}_{ac}(M)$, and its pushforward (as measures) $f_\#\rho\in \mathcal{P}_{ac}(T_\p M)$ by $f$. Then,
\[
f_\#\rho(u)=\rho(f^{-1}(u))|J(f^{-1})(u)|, \qquad \text{ for all } u\in T_\p M,
\]
where $J(f^{-1})$ denotes the Jacobian of the map $f^{-1}$. In our case, $J(f^{-1})$ represents the Jacobian of the exponential map $\exp_\p$, which was discussed in Section \ref{subsect:bounds-J}.

Since $M$ has non-positive curvature, by Rauch Comparison Theorem (see Theorem \ref{RCT}) we have
\begin{equation}
\label{eqn:dist-ineq}
\dist(x, y)\geq |f(x)-f(y)|, \qquad \text{ for all } x,y \in M.
\end{equation}
Inequality \eqref{eqn:dist-ineq} can be justified as follows. Fix any two points $x,y \in M$. In the setup of Theorem \ref{RCT}, take $\tilde{M} = T_\p M$, $i$ to be the identity map, and $c$ to be the geodesic curve joining $x$ and $y$. The assumption on the curvatures $\secc$ and $\tilde{\secc}$ holds, as $\secc \leq 0 = \tilde{\secc}$. The curve $\tilde{c}$ constructed in Theorem \ref{RCT} joins $\log_\p x$ and $\log_\p y$, and hence, its length is greater than or equal to the length of the straight segment joining $\log_\p x$ and $\log_\p y$. Together with Theorem \ref{RCT}, we then infer:
\[
|\log_\p x - \log_\p y| \leq l(\tilde{c}) \leq l(c) = \dist(x,y).
\]
Note that \eqref{eqn:dist-ineq} holds indeed with equal sign for $M=\bbr^\dm$. 

By \eqref{eqn:dist-ineq}, we have
\begin{equation}
\label{eqn:HLS-deriv1}
-\int_M\int_M\log(\dist(x, y))\rho(x)\rho(y)\d x \d y\leq -\int_M\int_M\log(|f(x)-f(y)|)\rho(x)\rho(y)\d x \d y,
\end{equation}
since $\log(\cdot)$ is an increasing function on $(0,\infty)$. Then, using the definition of the pushforward measure and the logarithmic HLS inequality on $\bbr^\dm \cong T_\p M$, we get 
\begin{align}
\label{eqn:HLS-deriv2}
 -\int_M\int_M\log(|f(x)-f(y)|)\rho(x)\rho(y)\d x \d y &= -\int_{T_\p M}\int_{T_\p M}\log(|w-z|)f_\#\rho(w)f_\#\rho(z)\d w \d z \nonumber 
 \\[2pt]
 &\leq \frac{1}{\dm}\int_{T_\p M}f_\#\rho(z)\log(f_\#\rho(z))\d z+C_0,
\end{align}
where $C_0$ is a constant that depends on $\dm$. Now, use again the property of the pushforward measure, to find
\begin{align}
\int_{T_\p M}f_\#\rho(z)\log(f_\#\rho(z))\d z& = \int_M \rho(x)\log(f_\#\rho(f(x))) \d x \nonumber \\
&= \int_M \rho(x)\log(\rho(x)|J(f^{-1})(f(x))| )\d x \nonumber \\
&= \int_M \rho(x)\log\rho(x)\d x+\int_M \rho(x)\log|J(f^{-1})(f(x))| \d x.
\label{eqn:HLS-deriv3}
\end{align}

By combining \eqref{eqn:HLS-deriv1}, \eqref{eqn:HLS-deriv2} and \eqref{eqn:HLS-deriv3}, we obtain
\begin{align}
\label{eqn:HLS-CH}
\begin{aligned}
& -\int_M\int_M\log(\dist(x, y))\rho(x)\rho(y)\d x \d y  \leq \\
& \hspace{3.5cm} \frac{1}{\dm}\int_M \rho(x)\log\rho(x)\d x+\frac{1}{\dm}\int_M \rho(x)\log|J(f^{-1})(f(x))| \d x+C_0.
 \end{aligned}
\end{align}
Compared to the logarithmic HLS inequality on $\bbr^\dm$, inequality \eqref{eqn:HLS-CH} has an additional term on the right-hand-side, containing the Jacobian of the exponential map. Assuming lower bounds of the sectional curvatures, as in \eqref{eqn:const-bounds}, we can bound this Jacobian from above (see Theorem \ref{lemma:Chavel-thms}), and generalize Theorem \ref{thm:HLS-Rd} to Cartan-Hadamard manifolds as follows.

\begin{theorem}[Logarithmic HLS inequality on Cartan-Hadamard manifolds] \label{HLSub}
Let $M$ be an $\dm$-dimensional Cartan-Hadamard manifold, and $\p \in M$ be a fixed (arbitrary) pole. Also assume that the sectional curvatures of $M$ satisfy, for all $x \in M$ and all sections $\sigma \subset T_x M$, 
\begin{equation}
\label{eqn:K-lowerB}
-c_m \leq \calK(x;\sigma)\leq 0,
\end{equation}
where $c_m$ is a positive constant. Then, if $\rho\in \mathcal{P}_{ac}(M)$ and $\displaystyle  \int_M \log(1+|\theta_x|^2)\rho(x)\d x<\infty$, we have
\begin{equation}
\label{eqn:HLS-CH-I}
\begin{aligned}
&-\int_M\int_M\log(\dist(x, y))\rho(x)\rho(y)\d x \d y\leq \\
& \hspace{3.5cm}  \frac{1}{\dm}\int_M \rho(x)\log\rho(x)\d x+\frac{(\dm-1)}{\dm}\int_M \log\left ( \frac{\sinh(\sqrt{c_m}\theta_x)}{\sqrt{c_m}\theta_x} \right ) \rho(x) \d x+C_0,
\end{aligned}
\end{equation}
where $C_0\in\bbr$ is the constant depending only on $\dm$ introduced in Theorem \ref{thm:HLS-Rd}.
\end{theorem}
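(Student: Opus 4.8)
The plan is to build directly on the pointwise computation carried out in the paragraphs immediately preceding the statement. That computation already establishes, for any Cartan--Hadamard manifold and any $\rho\in\calP_{ac}(M)$ satisfying the moment condition, the inequality \eqref{eqn:HLS-CH}, namely
\[
-\int_M\int_M\log(\dist(x,y))\rho(x)\rho(y)\,\d x\,\d y\le \frac1\dm\int_M\rho(x)\log\rho(x)\,\d x+\frac1\dm\int_M\rho(x)\log|J(f^{-1})(f(x))|\,\d x+C_0,
\]
where $f=\log_\p$ and $f^{-1}=\exp_\p$. The only thing left to do is to bound the Jacobian term by the explicit curvature-dependent expression appearing in \eqref{eqn:HLS-CH-I}. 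First I would check that the hypothesis $\int_M\log(1+|\theta_x|^2)\rho(x)\,\d x<\infty$ is exactly what is needed in order to invoke the Euclidean logarithmic HLS inequality (Theorem \ref{thm:HLS-Rd}) for $f_\#\rho$ in the step \eqref{eqn:HLS-deriv2}: since $f$ is a diffeomorphism we have $f_\#\rho\in\calP_{ac}(T_\p M)$, and since $|f(x)|=\|\log_\p x\|=\theta_x$, the change of variables formula gives $\int_{T_\p M}\log(1+|z|^2)\,f_\#\rho(z)\,\d z=\int_M\log(1+|\theta_x|^2)\rho(x)\,\d x<\infty$, which is precisely the integrability condition required in Theorem \ref{thm:HLS-Rd}. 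Hence \eqref{eqn:HLS-CH} is valid under the stated hypotheses.

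Next I would bound the Jacobian pointwise. For $x\in M$, writing $u=f(x)=\log_\p x\in T_\p M$ we have $\|u\|=\dist(\p,x)=\theta_x$, so the upper bound on the Jacobian of the exponential map from Theorem \ref{lemma:Chavel-thms} --- which, as noted in the remark following that theorem, uses only the lower curvature bound $-c_m\le\calK$ and therefore applies under assumption \eqref{eqn:K-lowerB} --- gives
\[
|J(f^{-1})(f(x))|=|J(\exp_\p)(\log_\p x)|\le\left(\frac{\sinh(\sqrt{c_m}\,\theta_x)}{\sqrt{c_m}\,\theta_x}\right)^{\dm-1},\qquad x\in M.
\]
On the other hand, since $\calK\le 0$ the exponential map $\exp_\p$ is volume non-decreasing (equivalently $\det\calA(t)\ge t^{\dm-1}$, the flat case of the comparison that also yields \eqref{eqn:dist-ineq}), so $|J(f^{-1})(f(x))|\ge 1$ by \eqref{eqn:Jexp-A}; thus $\log|J(f^{-1})(f(x))|$ is nonnegative and the integral $\int_M\rho(x)\log|J(f^{-1})(f(x))|\,\d x$ is well defined in $[0,\infty]$. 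Taking logarithms in the displayed bound (monotonicity of $\log$), integrating against $\rho$, dividing by $\dm$, and substituting into \eqref{eqn:HLS-CH} produces precisely \eqref{eqn:HLS-CH-I}: if the curvature term on the right-hand side equals $+\infty$ the asserted inequality is trivially true, and otherwise every term is finite and the chain of inequalities is rigorous.

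Since essentially all the analytic content was already produced in deriving \eqref{eqn:HLS-CH}, I do not expect a genuine obstacle here. The only points requiring a little care are the transfer of the moment hypothesis to $f_\#\rho$ (so that Theorem \ref{thm:HLS-Rd} may legitimately be applied) and the observation that the extra Jacobian term is sign-definite, which is what keeps all the manipulations well posed even when that term happens to be infinite.
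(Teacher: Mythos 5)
Your proposal is correct and follows essentially the same route as the paper: the paper's proof likewise consists of taking the already-derived inequality \eqref{eqn:HLS-CH} and inserting the upper bound on $|J(f^{-1})(f(x))|$ from Theorem \ref{lemma:Chavel-thms}. Your additional checks (that the moment hypothesis transfers to $f_\#\rho$ under the change of variables, and that $\log|J(f^{-1})(f(x))|\geq 0$ so the Jacobian term is well defined, with the observation that the upper Jacobian bound needs only the lower curvature bound and hence applies when $\calK\leq 0$) are sound and, if anything, slightly more careful than the paper's two-line argument.
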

\begin{proof}
The proof follows from \eqref{eqn:HLS-CH} and the upper bound on $J(f^{-1})$ given by Theorem \ref{lemma:Chavel-thms}:
\[
|J(f^{-1})(f(x))|\leq  
\left( \frac{\sinh(\sqrt{c_m} \|f(x)\|}{\sqrt{c_m} \|f(x)\|} \right)^{\dm-1} = 
\left( \frac{\sinh(\sqrt{c_m} \theta_x)}{\sqrt{c_m} \theta_x} \right)^{\dm-1} .
\]

\end{proof}

%%%%%%%%%%

\section{Existence of a global minimizer: general case}
\label{sec:existence-gen}
In this section we present the proof of Theorem \ref{thm:exist-gen}, which establishes the existence of a global minimizer of the energy functional on general Cartan-Hadamard manifolds. Throughout the section we assume that the sectional curvatures and the interaction potential satisfy \eqref{eqn:K-lb-c}, and \eqref{eqn:h-hyp}, respectively. In particular, $h$ has superlinear growth at infinity, as it grows faster than a non-decreasing convex function $\hc$ that satisfies \eqref{eqn:hc}.

We will present several key results leading to the proof of Theorem \ref{thm:exist-gen}. 
\begin{lemma}\label{Lem6.1}
Let $M$ be a Cartan-Hadamard manifold, and let $\p \in M$ be the Riemannian centre of mass of a density $\rho \in \calP_{ac}(M) \cap \calP_1(M)$, i.e., 
\begin{equation}
\label{eqn:fixed-cm}
\int_M \log_\p x\rho(x) \d x=0.
\end{equation}
Then we have
\[
(2-\sqrt{2})\calW_1(\rho, \delta_\p)\leq \iint_{M\times M} \dist(x, y)\rho(x)\rho(y)\d x\d y\leq 2\calW_1(\rho, \delta_\p).
\]
Here, $\delta_\p$ denotes the Dirac delta measure centred at the pole $\p$.
\end{lemma}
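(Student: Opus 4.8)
The plan is to handle the two bounds separately; the upper bound is a one-line consequence of the triangle inequality, and the centre-of-mass hypothesis \eqref{eqn:fixed-cm} will be needed only for the lower bound. For the upper bound, I would first note that since $\delta_\p$ is a Dirac mass, the unique transport plan between $\rho$ and $\delta_\p$ is $\rho\otimes\delta_\p$, so that $\calW_1(\rho,\delta_\p)=\int_M\dist(x,\p)\rho(x)\,\d x$, which is finite because $\rho\in\calP_1(M)$. Applying $\dist(x,y)\le\dist(x,\p)+\dist(\p,y)$ under the double integral and integrating one variable at a time then gives
\[
\iint_{M\times M}\dist(x,y)\rho(x)\rho(y)\,\d x\,\d y\le 2\int_M\dist(x,\p)\rho(x)\,\d x=2\,\calW_1(\rho,\delta_\p).
\]

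For the lower bound I would transfer the computation to the tangent space $T_\p M$ via the Riemannian logarithm $f=\log_\p$, exactly as in Section~\ref{sec:HLS}. Since $M$ has non-positive curvature, Rauch's comparison theorem in the form \eqref{eqn:dist-ineq} gives $\dist(x,y)\ge|f(x)-f(y)|$, so, writing $f_\#\rho$ for the pushforward measure,
\[
\iint_{M\times M}\dist(x,y)\rho(x)\rho(y)\,\d x\,\d y\ \ge\ \iint_{T_\p M\times T_\p M}|u-v|\,f_\#\rho(u)\,f_\#\rho(v)\,\d u\,\d v.
\]
The key point is that the centre-of-mass condition \eqref{eqn:fixed-cm} is exactly the statement that $f_\#\rho$ has vanishing mean, $\int_{T_\p M}v\,f_\#\rho(v)\,\d v=\int_M\log_\p x\,\rho(x)\,\d x=0$. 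Hence, for each fixed $u$, Jensen's inequality for the convex map $|\cdot|$ yields $\int_{T_\p M}|u-v|\,f_\#\rho(v)\,\d v\ge\bigl|\int_{T_\p M}(u-v)\,f_\#\rho(v)\,\d v\bigr|=|u|$. Integrating against $f_\#\rho(u)$ and using $|f(x)|=|\log_\p x|=\dist(x,\p)$ then gives
\[
\iint_{M\times M}\dist(x,y)\rho(x)\rho(y)\,\d x\,\d y\ \ge\ \int_{T_\p M}|u|\,f_\#\rho(u)\,\d u=\int_M\dist(x,\p)\rho(x)\,\d x=\calW_1(\rho,\delta_\p).
\]
This in fact produces the lower bound with constant $1$, which is stronger than, hence implies, the claimed constant $2-\sqrt2$.

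I do not expect a genuinely hard step here: integrability is taken care of by $\rho\in\calP_1(M)$ (and the left-hand double integral being possibly $+\infty$ only helps the lower bound), and the only place geometry enters is the Rauch inequality \eqref{eqn:dist-ineq}, which holds for all pairs $x,y$ precisely because on a Cartan-Hadamard manifold $\exp_\p$ is a global diffeomorphism, so the justification given around \eqref{eqn:dist-ineq} applies globally. The one mild subtlety worth flagging is simply the recognition that the Karcher-mean Euler-Lagrange identity \eqref{eqn:fixed-cm} is literally a zero-mean condition on the pushforward $f_\#\rho$ in $T_\p M$, which is what makes the Jensen step go through and is the reason the centre-of-mass assumption cannot be dropped.
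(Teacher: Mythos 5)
Your proof is correct, but the lower bound is obtained by a genuinely different (and in fact sharper) argument than the paper's. The upper bound is identical: triangle inequality plus the identity $\calW_1(\rho,\delta_\p)=\int_M\dist(x,\p)\rho(x)\,\d x$. For the lower bound, the paper squares the Rauch inequality \eqref{eqn:dist-ineq} to get a law-of-cosines estimate $\dist(x,y)^2\ge\theta_x^2+\theta_y^2-2\log_\p x\cdot\log_\p y$, pairs the centre-of-mass identity \eqref{eqn:fixed-cm} against $\log_\p y\,\rho(y)$ to kill the cross term, applies Cauchy--Schwarz to the nonnegative quantity $\theta_x+\theta_y-\dist(x,y)$, and then optimizes a quadratic in two auxiliary parameters to extract the constant $2-\sqrt2$. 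You instead use the unsquared Rauch inequality, push everything forward to $T_\p M$, observe that \eqref{eqn:fixed-cm} says precisely that $f_\#\rho$ has zero mean, and apply Jensen (equivalently, $\bigl|\int F\,\d\mu\bigr|\le\int|F|\,\d\mu$) in one variable at a time. This is shorter, avoids the parameter optimization entirely, and yields the constant $1$ in place of $2-\sqrt2\approx 0.586$; the constant $1$ is moreover sharp (consider on $M=\bbr$ two equal masses placed symmetrically about $\p$, for which both sides equal $\calW_1(\rho,\delta_\p)$). The only thing to be careful about — and you flag it — is that Jensen requires the first moment of $f_\#\rho$ to exist, which is exactly $\rho\in\calP_1(M)$. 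Your stronger constant would propagate to a slightly better coefficient in the coercivity estimate of Proposition \ref{prop:E-W1}, though this changes nothing qualitative in the existence theorems.
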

\begin{proof} The proof is based on Rauch comparison theorem and various simple calculations and estimates. See Appendix \ref{appendix:Lem6.1} for the details. Note that this result does not require any information on the curvature, and it applies to general CH manifolds.
\end{proof}

\begin{lemma}\label{Lem6.2}
Let $h$ be a lower semi-continuous function which satisfies \eqref{eqn:h-hyp}. Then, there exists $\epsilon >0$ and a constant $C$ such that
\[
h(\theta)-2 \dm \log\theta-2(\dm-1)  \log\left ( \frac{\sinh(\sqrt{c_m}\theta)}{\sqrt{c_m}\theta} \right ) -\epsilon\theta \geq C, \qquad \text{ for all } \theta\in(0, \infty).
\]
\end{lemma}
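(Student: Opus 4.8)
The plan is to prove that, for any fixed $\epsilon>0$ (a smaller $\epsilon$ only makes the inequality easier), the function
\[
g(\theta):=h(\theta)-2\dm\log\theta-2(\dm-1)\log\!\left(\frac{\sinh(\sqrt{c_m}\,\theta)}{\sqrt{c_m}\,\theta}\right)-\epsilon\theta
\]
is bounded below on $(0,\infty)$; the constant $C$ in the statement is then any such lower bound. Two elementary facts will be used repeatedly. First, $\log\!\big(\sinh x/x\big)\le x$ for all $x>0$: indeed $xe^x-\sinh x$ vanishes together with its first derivative at $x=0$ and is convex on $[0,\infty)$, hence nonnegative there, i.e. $\sinh x\le xe^x$. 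Consequently $2(\dm-1)\log\!\big(\sinh(\sqrt{c_m}\theta)/(\sqrt{c_m}\theta)\big)\le 2(\dm-1)\sqrt{c_m}\,\theta$ for all $\theta>0$. Second, $\theta\mapsto\log\!\big(\sinh(\sqrt{c_m}\theta)/(\sqrt{c_m}\theta)\big)$ is continuous on $[0,\infty)$ and tends to $0$ as $\theta\to0+$; moreover $h$ is lower semi-continuous, hence bounded below on every compact subinterval of $(0,\infty)$.

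Then I would split $(0,\infty)$ into three regimes. \emph{Near the origin.} By the first hypothesis in \eqref{eqn:h-hyp} there are $\theta_1>0$ and $C_1\in\bbr$ with $h(\theta)\ge A_1\log\theta-C_1$ on $(0,\theta_1]$, so on that interval
\[
g(\theta)\ge (A_1-2\dm)\log\theta-C_1-2(\dm-1)\log\!\left(\frac{\sinh(\sqrt{c_m}\theta)}{\sqrt{c_m}\theta}\right)-\epsilon\theta .
\]
Since $A_1-2\dm<0$ and $\log\theta\to-\infty$, the first term tends to $+\infty$ as $\theta\to0+$ while the remaining terms stay bounded, so $g(\theta)\to+\infty$ as $\theta\to0+$; together with the lower bound for $h$ on compact subintervals of $(0,\infty)$ this gives that $g$ is bounded below on $(0,\theta_1]$. \emph{Near infinity.} By the second hypothesis in \eqref{eqn:h-hyp} there are $\theta_2\ge\theta_1$ and $C_2\in\bbr$ with $h(\theta)\ge\hc(\theta)-C_2$ on $[\theta_2,\infty)$; using the linear bound on the Jacobian term,
\[
g(\theta)\ge \hc(\theta)-C_2-2\dm\log\theta-\big(2(\dm-1)\sqrt{c_m}+\epsilon\big)\theta ,\qquad \theta\ge\theta_2 ,
\]
and since $\hc(\theta)/\theta\to\infty$ the right-hand side tends to $+\infty$, so $g$ is bounded below on $[\theta_2,\infty)$. \emph{The middle.} On the compact interval $[\theta_1,\theta_2]\subset(0,\infty)$ the function $h$ is bounded below by lower semi-continuity and the remaining terms are continuous, so $g$ is bounded below there as well. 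Taking $C$ to be the minimum of the three lower bounds completes the proof.

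There is no genuine obstacle; the argument is a routine three-regime estimate. The only points requiring attention are the elementary comparison $\log(\sinh x/x)\le x$ (any linear upper bound with an additive constant would serve equally well), the sign bookkeeping at the origin, which is exactly where the hypothesis $A_1<2\dm$ is used, and the appeal to lower semi-continuity — rather than continuity — to bound $h$ from below on the middle interval. Structurally, the two halves of \eqref{eqn:h-hyp} do all the work: the at-most-$A_1\log$ singularity with $A_1<2\dm$ dominates $2\dm\log\theta$ and the (bounded) Jacobian correction near $\theta=0$, while the strict superlinearity of $\hc$ absorbs both the linear term $2(\dm-1)\sqrt{c_m}\,\theta$ coming from the curvature lower bound $-c_m$ and the extra $\epsilon\theta$ near $\theta=\infty$. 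Only superlinearity of $\hc$ is needed here; its convexity plays no role in this lemma.
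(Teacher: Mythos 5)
Your proof is correct and follows essentially the same route as the paper's: the linear bound $\log\bigl(\sinh(\sqrt{c_m}\theta)/(\sqrt{c_m}\theta)\bigr)\le\sqrt{c_m}\,\theta$, a three-regime split with $A_1<2\dm$ controlling the origin, superlinearity of $\hc$ absorbing the linear terms at infinity, and lower semi-continuity of $h$ on the compact middle interval. The only (harmless) differences are cosmetic: you prove $\sinh x\le xe^x$ by convexity where the paper uses $(1-e^{-x})/x\le 1$, and you correctly note that in this superlinear setting the bound holds for every $\epsilon>0$, whereas the paper only asserts existence of some $\epsilon$.
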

\begin{proof}
For the behaviour at infinity, first note that for all $\theta \geq 0$,
\begin{equation}
\label{eqn:est-sinh}
\begin{aligned}
\log\left(\frac{\sinh(\sqrt{c_m}\theta)}{\sqrt{c_m}\theta}\right) &=\sqrt{c_m}\theta+\log\left(\frac{1-e^{-2\sqrt{c_m}\theta}}{2\sqrt{c_m}\theta}\right) \\
%=\log\left(\frac{e^{\sqrt{c_m}\theta}-e^{-\sqrt{c_m}\theta}}{2\sqrt{c_m}\theta}\right)
& \leq  \sqrt{c_m}\theta,
\end{aligned}
\end{equation}
where for the inequality we used
\[
\frac{1-e^{-x}}{x} \leq 1, \qquad\forall x \geq 0.
\]
Since $h$ has superlinear growth at infinity, there exists $\epsilon>0$ such that 
\[
\liminf_{\theta\to \infty}\left(h(\theta)-2\dm\log\theta-2(\dm-1) \sqrt{c_m}\theta  -\epsilon\theta\right)>-\infty.
\]
Consequently, by \eqref{eqn:est-sinh} we can set
\begin{equation}
\label{eqn:C2}
C_2: = \liminf_{\theta\to \infty}\left(h(\theta)-2\dm\log\theta-2(\dm-1)  \log\left ( \frac{\sinh(\sqrt{c_m}\theta)}{\sqrt{c_m}\theta} \right )  -\epsilon\theta\right)>-\infty.
\end{equation}

For the behaviour near $0$, note that 
\[
-2\dm\log\theta > -A_1 \log\theta, \qquad \text{ for } 0<\theta<1.
\]
Hence, also using that $ \lim_{\theta\to0+} \log\left ( \frac{\sinh(\sqrt{c_m}\theta)}{\sqrt{c_m}\theta} \right ) =0$ and the first assumption on $h$ in \eqref{eqn:h-hyp}, we can define 
\begin{equation}
\label{eqn:C1}
\begin{aligned}
%\begin{cases}
C_1:=\liminf_{\theta\to 0+}\left(h(\theta)-2\dm\log\theta-2(\dm-1) \log\left ( \frac{\sinh(\sqrt{c_m}\theta)}{\sqrt{c_m}\theta} \right ) -\epsilon\theta\right) > -\infty.
%C_2&=\liminf_{\theta\to \infty}\left(h(\theta)-2\dm\log\theta-2(\dm-1) \log\left ( \frac{\sinh(\sqrt{c_m}\theta)}{\sqrt{c_m}\theta} \right ) -\epsilon\theta\right).
%\end{cases}
\end{aligned}
\end{equation}

From the definition of limit inferior, there exist $0<A_1<A_2$ such that
\begin{equation}
\label{eqn:A1A2}
\begin{aligned}
%\begin{cases}
C_1-1&< h(\theta)-2\dm\log\theta-2(\dm-1) \log\left ( \frac{\sinh(\sqrt{c_m}\theta)}{\sqrt{c_m}\theta} \right ) -\epsilon\theta, \qquad\forall \theta\in (0, A_1),\\[2pt]
C_2-1&<h(\theta)-2\dm\log\theta-2(\dm-1)  \log\left ( \frac{\sinh(\sqrt{c_m}\theta)}{\sqrt{c_m}\theta} \right ) -\epsilon\theta, \qquad\forall \theta\in(A_2, \infty).
%\end{cases}
\end{aligned}
\end{equation}
Furthermore, use that $h$ is a lower semi-continuous function to get
\[
C_3\leq h(\theta) - 2\dm\log\theta-2(\dm-1) \log\left ( \frac{\sinh(\sqrt{c_m}\theta)}{\sqrt{c_m}\theta} \right ) -\epsilon\theta, \qquad\forall \theta\in[A_1, A_2].
\]
Finally, define 
\begin{equation}
\label{eqn:C}
C:=\min(C_1-1, C_2-1, C_3),
\end{equation}
to get the desired result.
\end{proof}

Lemmas \ref{Lem6.1} and \ref{Lem6.2} can be used to bound from below the energy of a density by its $\calW_1$ distance to the delta measure at the pole. The result is the following.
\begin{proposition}
\label{prop:E-W1}
Let $M$ and $h$ satisfy the assumptions in Theorem \ref{thm:exist-gen} and let $\p$ be a fixed pole in $M$. Then there exist constants $C_0$, $C$ and $\epsilon>0$ such that
\begin{equation}
\label{eqn:est-E-below}
E[\rho] \geq-C_0 \dm+\frac{C}{2}+\frac{(2-\sqrt{2})\epsilon}{2}\calW_1(\rho, \delta_\p), \qquad \text{ for all } \rho \in \calP_\p(M).
\end{equation}
Specifically, $C_0$ depends on $\dm$, while $C$ and $\epsilon$ depend on $h$, $\dm$ and $c_m$.
\end{proposition}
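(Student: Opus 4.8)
The plan is to bound $E[\rho]$ from below by adding two one-sided estimates—one for the entropy and one for the interaction energy—engineered so that their logarithmic parts cancel exactly. We may assume $E[\rho]<\infty$, as otherwise the inequality is trivial; for $\rho\in\calP_\p(M)\subset\calP_1(M)$ the finite first moment guarantees $\int_M\log(1+\theta_x^2)\rho(x)\,\d x<\infty$, so Theorem \ref{HLSub} is applicable. Writing $g(\theta):=\log\!\left(\frac{\sinh(\sqrt{c_m}\theta)}{\sqrt{c_m}\theta}\right)$, the HLS inequality \eqref{eqn:HLS-CH-I} (multiplied through by $\dm$ and rearranged) gives
\[
\int_M\rho\log\rho\,\d x\;\geq\;-\dm\iint_{M\times M}\log(\dist(x,y))\rho(x)\rho(y)\,\d x\,\d y-(\dm-1)\int_M g(\theta_x)\rho(x)\,\d x-\dm C_0,
\]
while applying Lemma \ref{Lem6.2} pointwise with $\theta=\dist(x,y)$ and integrating against $\tfrac12\rho(x)\rho(y)\,\d x\,\d y$ yields
\[
\tfrac12\iint_{M\times M}h(\dist(x,y))\rho(x)\rho(y)\,\d x\,\d y\;\geq\;\dm\iint\log(\dist)\rho\rho+(\dm-1)\iint g(\dist(x,y))\rho\rho+\tfrac{\epsilon}{2}\iint\dist(x,y)\rho\rho+\tfrac{C}{2}.
\]
Adding the two, the terms $\pm\dm\iint\log(\dist)\rho\rho$ cancel (this integral being finite thanks to the moment bound and the finiteness of the entropy), leaving
\[
E[\rho]\;\geq\;(\dm-1)\left[\iint_{M\times M}g(\dist(x,y))\rho\rho-\int_M g(\theta_x)\rho\right]+\tfrac{\epsilon}{2}\iint_{M\times M}\dist(x,y)\rho\rho+\tfrac{C}{2}-\dm C_0.
\]

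The \emph{crux} is to show the bracketed difference is nonnegative, i.e.\ that the extra Jacobian term produced by the manifold HLS inequality is compensated by the convex $g(\dist)$ term coming from Lemma \ref{Lem6.2}. I would establish the stronger pointwise statement $\int_M g(\dist(x,y))\rho(y)\,\d y\geq g(\theta_x)$ for every $x$, and then integrate in $x$ against $\rho(x)\,\d x$. First, the Rauch estimate \eqref{eqn:dist-ineq} gives $\dist(x,y)\geq|\log_\p x-\log_\p y|$; combined with Jensen's inequality for the convex norm and the centre-of-mass condition $\int_M\log_\p y\,\rho(y)\,\d y=0$, this produces
\[
\int_M\dist(x,y)\rho(y)\,\d y\geq\int_M|\log_\p x-\log_\p y|\rho(y)\,\d y\geq\left|\log_\p x-\int_M\log_\p y\,\rho(y)\,\d y\right|=|\log_\p x|=\theta_x.
\]
Since $g$ is increasing and convex on $[0,\infty)$ with $g(0^+)=0$—a direct computation gives $g''(\theta)=c_m\big(u^{-2}-\sinh^{-2}u\big)>0$ for $u=\sqrt{c_m}\theta$, using $\sinh u>u$—a second application of Jensen followed by monotonicity yields $\int_M g(\dist(x,y))\rho(y)\,\d y\geq g\big(\int_M\dist(x,y)\rho(y)\,\d y\big)\geq g(\theta_x)$, as required.

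Finally, I would invoke Lemma \ref{Lem6.1}, whose hypothesis holds since $\p$ is the centre of mass of $\rho\in\calP_\p(M)$, to obtain $\iint\dist(x,y)\rho\rho\geq(2-\sqrt2)\calW_1(\rho,\delta_\p)$; together with $\calW_1(\rho,\delta_\p)=\int_M\theta_x\rho(x)\,\d x$ (the only transport plan to a Dirac sends all mass to $\p$), this converts $\tfrac{\epsilon}{2}\iint\dist\,\rho\rho$ into $\tfrac{(2-\sqrt2)\epsilon}{2}\calW_1(\rho,\delta_\p)$, and the claimed bound follows with $C,\epsilon$ from Lemma \ref{Lem6.2} and $C_0$ from Theorem \ref{HLSub}. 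The main obstacle is precisely the bracketed term: seeing that the additional Jacobian contribution of the manifold HLS inequality is exactly absorbed by the convex $g(\dist)$ term, and that this absorption hinges on \emph{both} the convexity/monotonicity of $g$ and the centre-of-mass constraint (via Rauch and a double use of Jensen). A secondary technical point is checking that all logarithmic integrals are finite, so that the cancellation of the $\iint\log(\dist)\rho\rho$ terms is legitimate.
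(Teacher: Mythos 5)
Your proof is correct, but it takes a genuinely different route from the paper's. Writing $g(\theta)=\log\bigl(\sinh(\sqrt{c_m}\theta)/(\sqrt{c_m}\theta)\bigr)$ as you do, the paper never confronts the mismatch term $(\dm-1)\bigl[\iint_{M\times M} g(\dist(x,y))\rho(x)\rho(y)\,\d x\,\d y-\int_M g(\theta_x)\rho(x)\,\d x\bigr]$ that your decomposition produces: instead of applying Theorem \ref{HLSub} at the fixed pole $\p$, it writes the HLS inequality with a generic pole $z$, multiplies by $\rho(z)$ and integrates in $z$, obtaining the integrated logarithmic HLS inequality \eqref{eqn:ineq-key} (see Remark \ref{rmk:int-HLS}), in which the Jacobian term is already the double integral $\iint_{M\times M} g(\dist(x,z))\rho(x)\rho(z)\,\d x\,\d z$. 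This matches exactly the $2(\dm-1)g(\dist)$ term supplied by Lemma \ref{Lem6.2}, so the paper's proof is simply integrated HLS plus Lemma \ref{Lem6.2} plus Lemma \ref{Lem6.1}, with the centre-of-mass constraint entering only through Lemma \ref{Lem6.1}. Your route keeps the pole-based HLS inequality and pays for it with the bracket, which you then eliminate via the pointwise estimate $\int_M g(\dist(x,y))\rho(y)\,\d y\geq g(\theta_x)$, proved by Rauch, the centre-of-mass condition, and a double use of Jensen (convexity and monotonicity of $g$); this is sound --- $g$ is indeed increasing and convex with $g(0^+)=0$ --- and it is in fact the same mechanism the paper deploys later, in Lemma \ref{lem:hc-rho}, for the convex function $\hc$ (there via $\dist(x,y)\geq|\theta_x-\theta_y\cos\angle(x\p y)|$ rather than via Rauch, but the idea is identical). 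What each approach buys: the paper's averaging-over-poles trick is shorter and requires no convexity of $g$ whatsoever, whereas your argument uses only Theorem \ref{HLSub} as stated (no integrated variant needed) and makes explicit that the curvature-induced Jacobian penalty is absorbed precisely because of the centre-of-mass constraint. Your handling of the potential $\infty-\infty$ issue in cancelling the $\dm\iint\log(\dist)\rho\rho$ terms is also adequate, and indeed more explicit than anything in the paper: for $\rho\in\calP_1(M)$ the positive part of that integral is controlled by the first moment, and when the entropy is finite the HLS inequality itself bounds the negative part.
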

\begin{proof}
Since $M$ satisfies the assumptions in Theorem \ref{HLSub} with a constant function $-c_m$ as a lower bound, we can write \eqref{eqn:HLS-CH-I} for a generic pole $z$ as
\begin{align*}
&-\int_M\int_M\log(\dist(x, y))\rho(x)\rho(y)\d x \d y\leq \\
& \hspace{3cm}  \frac{1}{\dm}\int_M \rho(x)\log\rho(x)\d x+\frac{(\dm-1)}{\dm}\int_M  \log\left ( \frac{\sinh(\sqrt{c_m} d(x,z))}{\sqrt{c_m} d(x,z)} \right ) \rho(x) \d x+C_0,
\end{align*}
for any $\rho\in\mathcal{P}_{ac}(M) \cap \mathcal{P}_1(M)$. Then, multiply the inequality above by $\rho(z)$ and integrate with respect to $z$ to get
\begin{equation}
\label{eqn:ineq-key}
\begin{aligned}
&-\int_M\int_M\log(\dist(x, y))\rho(x)\rho(y)\d x \d y\leq \\
& \hspace{1.5cm}  \frac{1}{\dm}\int_M \rho(x)\log\rho(x)\d x+\frac{(\dm-1)}{\dm}\int_M \int_M  \log\left ( \frac{\sinh(\sqrt{c_m} d(x,z))}{\sqrt{c_m} d(x,z)} \right ) \rho(x) \rho(z) \d x\d z+C_0.
\end{aligned}
\end{equation}
%Upon multiplication by $\dm$, \eqref{eqn:ineq-key} can be written as
%\begin{equation}
%\label{N-1}
%\begin{aligned}
%\hspace{1cm} 0& \leq   \int_M \rho(x)\log\rho(x)\d x \\
%& \quad + \iint_{M \times M} \left( \dm \log(\dist(x, y))+(\dm-1)  \log \left ( \frac{\sinh(\sqrt{c_m} d(x,y))}{\sqrt{c_m} d(x,y)} \right ) \right) \rho(x) \rho(y) \d x \d y+C_0 \dm.
%\end{aligned}
%\end{equation}

We estimate the energy functional using \eqref{eqn:ineq-key} as follows:
\begin{equation}
\label{eqn:Erho-lb1}
\begin{aligned}
& E[\rho]=\int_M \rho(x)\log\rho(x) \d x+\frac{1}{2}\iint_{M \times M} h(\dist(x, y))\rho(x)\rho(y)\d x \d y\\
& \; =\int_M \rho(x)\log\rho(x) \d x+\iint_{M \times M} \left(\dm\log(\dist(x, y))+(\dm-1)\log \left ( \frac{\sinh(\sqrt{c_m} d(x,y))}{\sqrt{c_m} d(x,y)} \right )\right) \rho(x) \rho(y) \d x\d y\\
&\; \quad  +\frac{1}{2}\iint_{M \times M} \left(
h(\dist(x, y))-2\dm\log(\dist(x, y))-2(\dm-1)\log \left ( \frac{\sinh(\sqrt{c_m} d(x,y))}{\sqrt{c_m} d(x,y)} \right )\right) \rho(x) \rho(y) \d x\d y
\\[5pt]
& \; \geq-C_0 \dm \\[2pt]
& \; \quad +\frac{1}{2}\iint_{M \times M} \left(
h(\dist(x, y))-2\dm\log(\dist(x, y))-2(\dm-1)\log \left ( \frac{\sinh(\sqrt{c_m} d(x,y))}{\sqrt{c_m} d(x,y)} \right )\right) \rho(x) \rho(y) \d x\d y.
\end{aligned}
\end{equation}

Then, for any $\rho \in \mathcal{P}_\p(M)$, using Lemma \ref{Lem6.2} along with its notations (e.g., see \eqref{eqn:C}), we get
\begin{equation}
\label{ineq:E-seq}
\begin{aligned}
E[\rho]&\geq-C_0\dm+\frac{1}{2}\iint_{M\times M} (C+\epsilon \dist(x, y))\rho(x)\rho(y)\d x \d y \\
&=-C_0 \dm+\frac{C}{2}+\frac{\epsilon}{2}\iint_{M\times M} \dist(x, y)\rho(x)\rho(y)\d x \d y  \\[2pt]
&\geq-C_0 \dm+\frac{C}{2}+\frac{(2-\sqrt{2})\epsilon}{2}\calW_1(\rho, \delta_\p), 
\end{aligned}
\end{equation}
where we also used Lemma \ref{Lem6.1} in the second inequality.
\end{proof}

\begin{remark}
\label{rmk:int-HLS}
\normalfont
We will be referring to the inequality \eqref{eqn:ineq-key} as the {\em integrated logarithmic HLS inequality.} In certain instances, it is this weaker form of the HLS inequality that we use in our proofs.
\end{remark}

We now present several lemmas which will be used to establish the lower semi-continuity of the energy functional on minimizing sequences. 
\begin{lemma}\label{l33}
Let $M$ and $h$ satisfy the assumptions in Theorem \ref{thm:exist-gen}. Then for all $\rho\in \mathcal{P}_{ac}(M) \cap \calP_1(M)$, the energy functional can be bounded below as
\begin{align}\label{x-4}
E[\rho]\geq \delta \int_M\rho(x)\log\rho(x)\d x-C_0\dm(1-\delta)+\frac{\mathcal{C}}{2},
\end{align}
for some constants $\delta$ and $\calC$ that depend on $h$, $\dm$ and $c_m$, and $C_0$ is the constant from the HLS inequality \eqref{eqn:HLS-CH-I}. 
\end{lemma}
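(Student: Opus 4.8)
The plan is to extract the internal (entropy) term from the HLS estimate in Proposition~\ref{prop:E-W1}, keeping a fixed fraction of it rather than absorbing it entirely. Concretely, I would revisit the chain of inequalities \eqref{eqn:Erho-lb1}--\eqref{ineq:E-seq}, but instead of applying the integrated logarithmic HLS inequality \eqref{eqn:ineq-key} with its full coefficient, I would split the entropy: write $\int_M \rho \log \rho \, \d x = \delta \int_M \rho \log \rho \, \d x + (1-\delta) \int_M \rho \log \rho \, \d x$ for a suitable $\delta \in (0,1)$, and apply the HLS inequality only to the $(1-\delta)$-portion. This converts the $-2\dm \log(\dist(x,y))$ term one can afford to control in the interaction energy into $-\frac{2\dm}{\dm}(1-\delta) = -2(1-\delta)$ times a log, so the bookkeeping changes accordingly.

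The key steps, in order: (i) Start from $E[\rho] = (1-\delta)\int_M \rho\log\rho \,\d x + \delta \int_M \rho\log\rho\,\d x + \frac12 \iint h(\dist(x,y))\rho\rho\,\d x\d y$. (ii) Apply the integrated logarithmic HLS inequality \eqref{eqn:ineq-key} to the factor $(1-\delta)\int_M \rho\log\rho$, which yields a lower bound involving $-(1-\delta)\dm \iint \log(\dist(x,y))\rho\rho - (1-\delta)(\dm-1)\iint \log\bigl(\frac{\sinh(\sqrt{c_m}\dist(x,y))}{\sqrt{c_m}\dist(x,y)}\bigr)\rho\rho - (1-\delta)\dm C_0$; note $1-\delta \le 1$ and the sinh-log term is nonnegative, so one can also bound that term trivially if needed, but it is cleaner to keep it and pair it with the interaction energy. (iii) Group the interaction integral with these two terms: the combined integrand becomes $\frac12 h(\theta) - (1-\delta)\dm \log\theta - (1-\delta)(\dm-1)\log\bigl(\frac{\sinh(\sqrt{c_m}\theta)}{\sqrt{c_m}\theta}\bigr)$ with $\theta = \dist(x,y)$. (iv) Show this integrand is bounded below by a constant $\mathcal{C}/2$: since $A_1 < 2\dm$, the $\liminf_{\theta\to 0+}(h(\theta) - A_1\log\theta) > -\infty$ hypothesis together with $2(1-\delta)\dm < 2\dm$ handles the origin provided we additionally choose $\delta$ small enough that $2(1-\delta)\dm \ge A_1$ is \emph{not} required --- rather we need $2(1-\delta)\dm > A_1$ would be wrong direction; instead, since $h(\theta) - A_1 \log\theta$ is bounded below near $0$ and $2(1-\delta)\dm$ can be taken as close to $2\dm > A_1$ as desired, we get $h(\theta) - 2(1-\delta)\dm\log\theta \ge h(\theta) - A_1\log\theta + (A_1 - 2(1-\delta)\dm)\log\theta$, and for $\delta$ small $A_1 - 2(1-\delta)\dm < 0$ while $\log\theta < 0$ near $0$, so this extra term is positive --- good. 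For infinity, superlinearity of $h$ (growth beyond the convex $\hc$ with $\hc(\theta)/\theta \to \infty$) dominates both the linear-in-$\theta$ contribution from the sinh-log term (bounded by $\sqrt{c_m}\theta$ via \eqref{eqn:est-sinh}) and the logarithmic term. Lower semicontinuity of $h$ handles the compact middle range. (v) Finally discard the nonnegative $\iint \log(\sinh/\cdot)\rho\rho$ remainder or absorb it into $\mathcal{C}$, obtaining \eqref{x-4} with $-C_0\dm(1-\delta)$ coming from step (ii).

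The main obstacle I anticipate is choosing $\delta$ and $\epsilon$-type parameters consistently so that \emph{simultaneously}: the origin behavior works (needs $2(1-\delta)\dm$ still strictly above... actually strictly: we need the net coefficient of $\log\theta$ near $0$ to keep $h(\theta) - (\text{coeff})\log\theta$ bounded below, which since $\log\theta \to -\infty$ requires coeff $\le$ something, and here having a \emph{smaller} coefficient $2(1-\delta)\dm$ than $2\dm$ only helps, since $A_1 < 2\dm$ already suffices and shrinking the coefficient makes $-(\text{coeff})\log\theta$ larger near $0$), and the infinity behavior works (where superlinearity gives room regardless of $\delta$). So in fact any $\delta \in (0,1)$ works for the origin, and the infinity bound holds for any $\delta$ as well; the only real constraint is $\delta > 0$ so that a genuine fraction of entropy survives, and $\delta < 1$ so HLS is applicable to a positive remainder. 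Thus the proof is essentially a re-run of Proposition~\ref{prop:E-W1}'s argument with the entropy split --- the subtlety is purely in verifying that the modified interaction-plus-log integrand \eqref{eqn:C}-style constant $\mathcal{C}$ is well-defined, which follows from exactly the same three-regime analysis (near $0$ via \eqref{eqn:h-hyp} first assumption, near $\infty$ via superlinearity and \eqref{eqn:est-sinh}, on compacts via lower semicontinuity) as in Lemma~\ref{Lem6.2}.
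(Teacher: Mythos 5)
Your proposal is correct and follows essentially the same route as the paper's proof, which fixes $\delta$ by $A_1 = 2\dm(1-\delta)$, bounds $h(\theta)$ below pointwise by $2\dm(1-\delta)\log\theta + 2(\dm-1)\log\bigl(\tfrac{\sinh(\sqrt{c_m}\theta)}{\sqrt{c_m}\theta}\bigr) + \mathcal{C}$ via the three-regime argument of Lemma \ref{Lem6.2}, and then applies the integrated HLS inequality \eqref{eqn:ineq-key} to the $(1-\delta)$-fraction of the resulting expression. One caveat: the assertion in your closing paragraph that \emph{any} $\delta\in(0,1)$ works at the origin (and that shrinking the coefficient of $\log\theta$ "only helps" there) is false when $h$ is genuinely singular at $0$ --- e.g.\ for $h(\theta)=A_1\log\theta$ near $0$ one has $h(\theta)-2(1-\delta)\dm\log\theta = (A_1-2(1-\delta)\dm)\log\theta \to -\infty$ whenever $2(1-\delta)\dm < A_1$ --- so the constraint $\delta \le 1 - A_1/(2\dm)$ that your step (iv) correctly imposes is genuinely needed, and the paper's choice saturates it.
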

\begin{proof}
Using very similar considerations as in the proof of Lemma \ref{Lem6.2}, one can find a constant $\calC$ such that
\[
h(\theta)\geq A_1\log\theta+2(\dm-1)\log\left( \frac{\sinh(\sqrt{c_m}\theta)}{\sqrt{c_m}\theta}\right)+\mathcal{C}, \qquad \text{ for all } \theta>0.
\]
Define $0<\delta<1$ by $A_1=2\dm(1-\delta)$. Then,
\begin{align*}
& E[\rho] \geq \int_M \rho(x)\log\rho(x)\d x \\[2pt]
& \quad +\frac{1}{2}\iint_{M\times M} \left( 2 \dm(1-\delta) \log(\dist(x, y))+2(\dm-1)\log\left( \frac{\sinh(\sqrt{c_m}d(x,y))}{\sqrt{c_m}d(x,y)}\right)+\mathcal{C}\right)\rho(x)\rho(y) \d x \d y.
\end{align*}
Furthermore, after some simple manipulations, we find
\begin{align*}
E[\rho] & \geq \int_M \rho(x)\log\rho(x)\d x \\
& \quad + (1-\delta)\iint_{M \times M} \left(\dm\log(\dist(x, y))+(\dm-1)\log\left( \frac{\sinh(\sqrt{c_m}d(x,y))}{\sqrt{c_m}d(x,y)}\right)\right)\rho(x)\rho(y)\d x \d y+\frac{\mathcal{C}}{2}\\[2pt]
%&=(1-\delta)\Biggl ( \int_M \rho(x)\log\rho(x)\d x+\iint_{M \times M} \left(\dm\log(\dist(x, y))+(\dm-1)\log\left( \frac{\sinh(\sqrt{c_m}\theta)}{\sqrt{c_m}\theta}\right)\right)\rho(x)\rho(y)\d x \d y\Biggr)\\
%&\quad +\delta\int_M \rho(x)\log\rho(x)\d x +\frac{\mathcal{C}}{2}
& \geq \delta\int\rho(x)\log\rho(x)\d x-C_0\dm(1-\delta)+\frac{\mathcal{C}}{2},
\end{align*}
where for the last line we used the integrated logarithmic HLS inequality \eqref{eqn:ineq-key}. This shows the claim.
\end{proof}

\begin{lemma} 
\label{lemma:unif-int}
Let $M$ be an $\dm$-dimensional Cartan-Hadamard manifold with sectional curvatures that satisfy \eqref{eqn:K-lb-c}. Fix a pole $\p \in M$, and suppose $\{\rho_k\}_{k\in\mathbb{N}} \subset \calP_{ac}(M)$ is a sequence of probability densities such that
\[
\int_M \theta_x \rho_k(x)\d x,\quad \text{ and } \quad \int_M \rho_k(x)\log\rho_k(x)\d x
%\quad\text{ and }\quad\int_M\log\left( \frac{\sinh(\sqrt{c_m}\theta_x)}{\sqrt{c_m}\theta_x}\right) \rho_k(x)\d x
\]
are bounded from above uniformly in $k\in\mathbb{N}$. Then $ \{\rho_k\}_{k\in\mathbb{N}}$ is uniformly integrable on $M$.
\end{lemma}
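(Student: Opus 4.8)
The plan is to use the classical de la Vallée-Poussin criterion: a family $\{\rho_k\}$ in $L^1(M)$ is uniformly integrable if and only if there exists a superlinear function $\Phi:[0,\infty)\to[0,\infty)$ (i.e.\ $\Phi(t)/t\to\infty$ as $t\to\infty$) with $\sup_k \int_M \Phi(\rho_k(x))\,\d x < \infty$. A natural candidate here is $\Phi(t)=t\log t$ (extended by $\Phi(0)=0$), since the hypothesis already controls $\int_M \rho_k\log\rho_k\,\d x$ from above. The only difficulty is that $t\log t$ is negative on $(0,1)$, so $\int_M \rho_k\log\rho_k\,\d x$ being bounded above does not immediately bound $\int_M |\rho_k\log\rho_k|\,\d x$; the negative part must be controlled, and this is exactly where the bound on the first moment $\int_M \theta_x\rho_k(x)\,\d x$ enters.

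First I would split $\int_M \rho_k\log\rho_k\,\d x = \int_{\{\rho_k\geq 1\}}\rho_k\log\rho_k\,\d x + \int_{\{\rho_k<1\}}\rho_k\log\rho_k\,\d x$. The first integral is nonnegative and equals $\int_M (\rho_k\log\rho_k)_+\,\d x$; to conclude uniform integrability via de la Vallée-Poussin it suffices to bound this term uniformly. Rearranging, this reduces to bounding the negative part $\int_{\{\rho_k<1\}}\rho_k\log(1/\rho_k)\,\d x$ from above. To handle it, I would introduce the reference density $\mu(x)=c\,e^{-\theta_x}$ on $M$, where $c>0$ is the normalizing constant; this is a genuine probability density on $M$ because, by the volume comparison estimate in Corollary \ref{cor:AV-bounds} (upper bound) together with Lemma \ref{SIEQ}, the volume of geodesic balls $|B_\theta(\p)|$ grows at most exponentially in $\theta$, so $\int_M e^{-\theta_x}\,\d x<\infty$ provided we instead use $\mu(x)=c\,e^{-\kappa \theta_x}$ with $\kappa$ large enough depending on $\dm$ and $c_m$ (and then $\int_M \theta_x\mu(x)\,\d x<\infty$ as well). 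The key inequality is the nonnegativity of relative entropy: since $x\mapsto x\log x$ is convex and $\int \mu = \int \rho_k = 1$,
\[
\int_M \rho_k(x)\log\frac{\rho_k(x)}{\mu(x)}\,\d x \geq 0,
\]
which gives
\[
\int_M \rho_k(x)\log\rho_k(x)\,\d x \geq \int_M \rho_k(x)\log\mu(x)\,\d x = \log c - \kappa\int_M \theta_x\rho_k(x)\,\d x.
\]

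Combining this lower bound with the assumed upper bound on $\int_M \rho_k\log\rho_k\,\d x$ is the wrong direction; instead I would use the nonnegativity of relative entropy directly to control the negative part. Concretely, from $\int_M \rho_k\log(\rho_k/\mu)\,\d x\geq 0$ we get
\[
\int_M \rho_k\log\rho_k\,\d x \;\geq\; \int_M \rho_k\log\mu\,\d x \;=\; \log c - \kappa \int_M \theta_x\rho_k(x)\,\d x,
\]
so $\int_M \rho_k\log\rho_k\,\d x$ is also bounded \emph{below} uniformly in $k$. Now split: on $\{\rho_k<1\}$ we have $0\leq -\rho_k\log\rho_k$, and writing $-\rho_k\log\rho_k \leq \rho_k\log(1/\rho_k)$, I compare with $\mu$ again via the elementary pointwise bound $a\log(1/a)\leq \tfrac{1}{e} + a\log(1/b) + a\log(a/b)_+$ — more cleanly, use that $-\rho_k\log\rho_k = -\rho_k\log(\rho_k/\mu) - \rho_k\log\mu$ and that $-t\log t \le e^{-1}$ on the region where the first term could be negative; integrating, $\int_{\{\rho_k<1\}}(-\rho_k\log\rho_k)\,\d x \le \int_M(-\rho_k\log\mu)\,\d x + (\text{bounded}) = \kappa\int_M\theta_x\rho_k\,\d x - \log c + C$, which is uniformly bounded by hypothesis. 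Therefore $\int_M |\rho_k\log\rho_k|\,\d x$ is uniformly bounded, and hence so is $\int_M (\rho_k\log\rho_k)_+\,\d x = \int_M \Phi(\rho_k)\,\d x$ with $\Phi(t)=(t\log t)_+$, a superlinear function. De la Vallée-Poussin then yields uniform integrability of $\{\rho_k\}$.

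The main obstacle is the construction and integrability of the reference measure $\mu$: one must verify $\int_M e^{-\kappa\theta_x}\,\d x<\infty$ on a general Cartan-Hadamard manifold with curvature bounded below, which is where Corollary \ref{cor:AV-bounds} and Lemma \ref{SIEQ} are essential (the volume growth is at most $e^{(\dm-1)\sqrt{c_m}\,\theta}$ up to polynomial factors, so any $\kappa>(\dm-1)\sqrt{c_m}$ works), and to simultaneously ensure $\int_M\theta_x\mu\,\d x<\infty$. Once $\mu$ is in hand, the rest is the standard relative-entropy bookkeeping sketched above; care is only needed in handling the sign of $t\log t$ and in invoking the precise form of the de la Vallée-Poussin theorem (the version stating that an $L^1$-bounded family with $\sup_k\int\Phi(|\rho_k|)<\infty$ for some superlinear $\Phi$ is uniformly integrable).
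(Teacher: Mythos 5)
Your proposal is correct and follows essentially the same route as the paper's proof: both control the negative part of $\int_M\rho_k\log\rho_k\,\d x$ by comparing against a reference probability measure with exponential decay in $\theta_x$ (whose role requires the curvature lower bound through the volume/area comparison of Corollary \ref{cor:AV-bounds}), using Jensen's inequality / nonnegativity of relative entropy together with the uniform first-moment bound, and then conclude from the superlinearity of $t\log t$. The only cosmetic differences are your choice of $\mu=c\,e^{-\kappa\theta_x}$ (normalizable via the comparison theorem) versus the paper's exactly normalized $\mu(x)=\theta_x^{\dm-1}e^{-\theta_x}/(\alpha|\partial B_{\theta_x}(\p)|)$, and invoking de la Vall\'ee-Poussin rather than the direct Chebyshev estimate on $\{\rho_k\geq Q\}$.
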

\begin{proof}
The proof is a little long and technical, we present it in Appendix \ref{appendix:unif-int}.
\end{proof}

\begin{lemma}
\label{lem:hc-rho}
Assume $M$ and $h$ satisfy the assumptions in Theorem \ref{thm:exist-gen}. Let $\p \in M$ be a fixed pole, and $\{\rho_k\}_{k\in\mathbb{N}} \subset \calP_{\p}(M)$ a sequence of probability densities such that $E[\rho_k]$ is uniformly bounded from above. Then
\[
\int_M \hc (\theta_x)\rho_k(x)\d x
\]
is also uniformly bounded from above, where $\hc$ is the function from assumption \eqref{eqn:h-hyp} on $h$.
\end{lemma}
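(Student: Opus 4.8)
The goal is to decouple the interaction energy into a term controlled by the entropy via the integrated logarithmic HLS inequality and a leftover term that dominates $\hc(\theta_x)$. The plan is to proceed by finding a lower bound for $h$ of the form $h(\theta) \geq A_1 \log \theta + 2(\dm-1)\log\left(\frac{\sinh(\sqrt{c_m}\theta)}{\sqrt{c_m}\theta}\right) + \frac{1}{2}\hc(\theta) + \calC$ valid for all $\theta > 0$, for suitable constants. This is possible by the same kind of argument as in Lemma \ref{Lem6.2}: near $0$ we use $A_1 < 2\dm$ (so $A_1\log\theta$ is not too singular), the facts that $\log\left(\frac{\sinh(\sqrt{c_m}\theta)}{\sqrt{c_m}\theta}\right) \to 0$ and $\hc$ is bounded near $0$, together with the first hypothesis in \eqref{eqn:h-hyp}; at infinity we use $\log\left(\frac{\sinh(\sqrt{c_m}\theta)}{\sqrt{c_m}\theta}\right) \leq \sqrt{c_m}\theta$ (estimate \eqref{eqn:est-sinh}), the second hypothesis in \eqref{eqn:h-hyp} (i.e., $h(\theta) - \hc(\theta)$ bounded below), and crucially the superlinearity \eqref{eqn:hc} of $\hc$, which lets $\frac{1}{2}\hc(\theta)$ absorb both $2(\dm-1)\sqrt{c_m}\theta$ and $2\dm\log\theta$; on the compact middle interval we use lower semi-continuity of $h$. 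Taking the minimum of the three constants gives the desired global bound.

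Next I would insert this bound into the energy and split, exactly as in the proof of Proposition \ref{prop:E-W1}. Writing $h(\dist(x,y))$ as the lower bound plus a nonnegative remainder, I would group the $A_1\log\dist(x,y)$ and $2(\dm-1)\log\left(\frac{\sinh}{\cdot}\right)$ pieces together with the entropy and apply the integrated logarithmic HLS inequality \eqref{eqn:ineq-key} (after fixing $0 < \delta < 1$ by $A_1 = 2\dm(1-\delta)$, as in Lemma \ref{l33}, so that a fraction $\delta$ of the entropy is left over and the rest cancels against the HLS term). What remains is
\[
E[\rho_k] \geq \delta \int_M \rho_k(x)\log\rho_k(x)\,\d x - C_0\dm(1-\delta) + \frac{\calC}{2} + \frac{1}{4}\iint_{M\times M}\hc(\dist(x,y))\,\rho_k(x)\rho_k(y)\,\d x\,\d y.
\]
Since $E[\rho_k]$ is uniformly bounded above by hypothesis, and $\int_M \rho_k \log\rho_k\,\d x$ is bounded below (e.g. by Lemma \ref{l33}, or more simply since $\iint \hc(\dist)\rho_k\rho_k \geq \hc(0)$ is bounded and Lemma \ref{l33} already controls the entropy from below — but here I would just use that the entropy term appears with a positive sign, so I only need it bounded below, which follows from Lemma \ref{l33} applied to $\rho_k$), it follows that $\iint_{M\times M}\hc(\dist(x,y))\rho_k(x)\rho_k(y)\,\d x\,\d y$ is uniformly bounded above.

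Finally I would pass from the double integral of $\hc(\dist(x,y))$ to the single integral of $\hc(\theta_x)$. Since $\hc$ is non-decreasing and convex, and the centre of mass of $\rho_k$ is $\p$, one has by Jensen's inequality (in the form used for the Karcher mean / Rauch comparison) that $\iint_{M\times M}\hc(\dist(x,y))\rho_k(x)\rho_k(y)\,\d x\,\d y \geq \int_M \hc\left(\iint \dist(x,y)\rho_k(y)\,\d y\right)\rho_k(x)\,\d x$ — more carefully, I would use Lemma \ref{Lem6.1} together with convexity: convexity and monotonicity of $\hc$ give $\hc(\dist(x,y)) \geq \hc\bigl(|\tfrac{1}{2}(\dist(x,y))\cdot 2|\bigr)$ is not quite the route; instead, I expect the cleanest path is: by monotonicity of $\hc$ and the triangle inequality $\dist(x,\p) \leq \dist(x,y) + \dist(y,\p)$, combined with convexity $\hc(a) \leq \tfrac{1}{2}\hc(2a_1) + \tfrac{1}{2}\hc(2a_2)$ whenever $a \leq a_1 + a_2$, so that $\int_M \hc(\theta_x)\rho_k(x)\,\d x \leq \tfrac{1}{2}\iint \hc(2\dist(x,y))\rho_k(x)\rho_k(y)\,\d x\,\d y + \tfrac{1}{2}\int \hc(2\theta_y)\rho_k(y)\,\d y$, and then absorb the last term; a slicker alternative, matching the structure of Lemma \ref{Lem6.1}, is to replace $\hc$ by $\theta \mapsto \hc(C\theta)$ from the start (still superlinear, still convex non-decreasing) so that one directly gets the single-integral bound. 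I expect this last conversion step — getting a clean single integral $\int_M\hc(\theta_x)\rho_k(x)\,\d x$ bounded above from the double integral, handling the factor-of-two issues from convexity and the triangle inequality without circularity — to be the main technical nuisance; it is routine but requires care in choosing constants so that the auxiliary convex function produced still satisfies \eqref{eqn:hc}.
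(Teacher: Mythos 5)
Your first half follows the paper's route: a pointwise lower bound on $h$ of the form $A_1\log\theta+2(\dm-1)\log\bigl(\tfrac{\sinh(\sqrt{c_m}\theta)}{\sqrt{c_m}\theta}\bigr)+\tfrac12\hc(\theta)+\calC$, inserted into $E[\rho_k]$ and combined with the integrated HLS inequality \eqref{eqn:ineq-key}, does yield a uniform upper bound on $\iint\hc(\dist(x,y))\rho_k\rho_k$. One flaw there: by keeping the coefficient $A_1=2\dm(1-\delta)$ you leave over $\delta\int\rho_k\log\rho_k$ with a plus sign, and to conclude you need the entropy bounded \emph{below}; Lemma \ref{l33} gives the opposite inequality (it reads $E[\rho]\geq\delta\int\rho\log\rho+\mathrm{const}$, so together with the upper bound on $E[\rho_k]$ it bounds the entropy from \emph{above}). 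The fact you need is true — a lower bound on the entropy follows from the uniform first-moment bound of Proposition \ref{prop:E-W1} via the Carleman-type estimate in Appendix \ref{appendix:unif-int} — but your citation does not deliver it. The paper avoids the issue entirely by using the coefficient $2\dm$ (the full entropy) in the decomposition, which is legitimate because near the origin $-2\dm\log\theta>-A_1\log\theta$, so no leftover entropy term appears.

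The genuine gap is the passage from $\iint_{M\times M}\hc(\dist(x,y))\rho_k(x)\rho_k(y)\,\d x\,\d y$ to $\int_M\hc(\theta_x)\rho_k(x)\,\d x$, which you correctly identify as the crux but do not actually carry out, and neither of your sketched routes closes. The triangle-inequality route leaves the term $\tfrac12\int\hc(2\theta_y)\rho_k(y)\,\d y$, which cannot be absorbed into $\int\hc(\theta_x)\rho_k(x)\,\d x$: convexity gives $\hc(2\theta)\geq2\hc(\theta)-\hc(0)$, the wrong direction, and there is no estimate $\hc(2\theta)\leq C\,\hc(\theta)$ for general convex $\hc$ (take $\hc(\theta)=e^\theta$). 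Rescaling to $\theta\mapsto\hc(2\theta)$ from the start does not help either, because the double-integral bound was obtained from $\liminf_{\theta\to\infty}(h(\theta)-\hc(\theta))>-\infty$, and $h\geq\hc(2\,\cdot)+\mathrm{const}$ need not hold (again $h=\hc=e^\theta$). The paper's argument is different and exploits the constraint $\rho_k\in\calP_\p(M)$: from the Rauch/law-of-cosines inequality $\dist(x,y)^2\geq\theta_x^2+\theta_y^2-2\theta_x\theta_y\cos\angle(x\p y)\geq(\theta_x-\theta_y\cos\angle(x\p y))^2$ and monotonicity one gets $\hc(\dist(x,y))\geq\hc(|\theta_x-\theta_y\cos\angle(x\p y)|)$; then Jensen's inequality for the even convex extension $\hc(|\cdot|)$, together with $\int_M\theta_y\cos\angle(x\p y)\rho_k(y)\,\d y=0$ (a consequence of the centre of mass being at $\p$), yields $\int_M\hc(\dist(x,y))\rho_k(y)\,\d y\geq\hc(\theta_x)$ for every $x$, with no loss of constants and no circularity. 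Without this (or an equivalent) step your proof does not conclude.
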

\begin{proof}
From the definition of the energy functional, for any density $\rho \in \calP_{ac}(M)$ we have
\begin{align*}
& E[\rho]=\int_M \rho(x)\log\rho(x)\d x \\
& \quad +\iint_{M \times M} \left(\dm\log(\dist(x, y))+(\dm-1)\log\left( \frac{\sinh(\sqrt{c_m}d(x,y))}{\sqrt{c_m}d(x,y)}\right)\right)\rho(x)\rho(y)\d x \d y \\
&\quad +\frac{1}{2}\iint_{M\times M}\left(
h(\dist(x, y))-2\dm\log(\dist(x, y))-2(\dm-1)\log\left( \frac{\sinh(\sqrt{c_m}d(x,y))}{\sqrt{c_m}d(x,y)}\right)
\right)\rho(x)\rho(y)\d x\d y.
\end{align*}
Using \eqref{eqn:Erho-lb1} and $\log\left(\frac{\sinh\theta}{\theta}\right)\leq \theta$, one gets
\begin{equation}
\label{eqn:E-geq}
E[\rho]\geq -C_0\dm+\frac{1}{2}\iint_{M \times M} \left(h(\dist(x, y))-2\dm\log(\dist(x, y))-2(\dm-1)\sqrt{c_m}\dist(x, y)
\right)\rho(x)\rho(y)\d x\d y.
\end{equation}

We decompose
\begin{equation}
\label{eqn:decomp}
\begin{aligned}
&h(\theta)-2\dm\log\theta-2(\dm-1)\sqrt{c_m}\theta \\[2pt]
&\qquad =\left(h(\theta)-2\dm\log\theta-\frac{\hc(\theta)}{2}\right)+\left(\frac{\hc(\theta)}{4}-2(\dm-1)\sqrt{c_m}\theta\right)+\frac{\hc(\theta)}{4}.
\end{aligned}
\end{equation}
By combining \eqref{eqn:E-geq} and \eqref{eqn:decomp} we then find
\begin{equation}
\label{eqn:hcrho-bound}
\begin{aligned}
& \frac{1}{2}\iint_{M \times M}\frac{\hc(\dist(x, y))}{4}\rho(x)\rho(y)\d x \d y \leq E[\rho]+C_0\dm\\
&\hspace{4cm} -\frac{1}{2}\iint_{M \times M} \left(h(\dist(x, y))-2\dm\log\dist(x, y)-\frac{\hc(\dist(x, y))}{2}\right)\rho(x)\rho(y)\d x \d y\\
&\hspace{4cm} -\frac{1}{2}\iint_{M \times M} \left(\frac{\hc(\dist(x, y))}{4}-2(\dm-1)\sqrt{c_m}\dist(x, y)\right)\rho(x)\rho(y)\d x \d y,
\end{aligned}
\end{equation}
for all $\rho \in \calP_{ac}(M)$.

Arguing as in the proof of Lemma \ref{Lem6.2}, from
\[
\liminf_{\theta\to0+}\left(h(\theta)-2\dm\log\theta-\frac{\hc(\theta)}{2}\right) > -\infty,
\]
and
\[
\liminf_{\theta\to\infty}\left(h(\theta)-2\dm\log\theta-\frac{\hc(\theta)}{2}\right)\geq\liminf_{\theta\to\infty}\left(\frac{\hc(\theta)}{2}-2\dm\log\theta\right)=\infty,
\]
we infer that
\[
h(\theta)-2\dm\log\theta-\frac{\hc(\theta)}{2}
\]
has a lower bound on $(0,\infty)$. Similarly, from
\[
\lim_{\theta\to0+}\left(\frac{\hc(\theta)}{4}-2(\dm-1)\sqrt{c_m}\theta\right)=\frac{\hc(0)}{4}
\]
and
\[
\lim_{\theta\to\infty}\left(\frac{\hc(\theta)}{4}-2(\dm-1)\sqrt{c_m}\theta\right)=\infty,
\]
we also know that
\[
\frac{\hc(\theta)}{4}-2(\dm-1)\sqrt{c_m}\theta
\]
has a lower bound on $(0,\infty)$. 

Using these considerations in \eqref{eqn:hcrho-bound} for the sequence $\rho_k$ (note that $E[\rho_k]$ is uniformly bounded above) we infer that 
\[
\iint_{M \times M}\hc(\dist(x, y))\rho_k(x)\rho_k(y)\d x \d y
\]
is uniformly bounded in $k\in\mathbb{N}$. We will use this result to show the claim that the single integrals $\int_{M}\hc(\theta_x)\rho_k(x) \d x$ are uniformly bounded.

Since $\hc$ is non-decreasing, for any $x$ fixed we have
\begin{equation}
\label{eqn:hc-abs}
\int_M \hc(\dist(x, y))\rho_k(y)\d y\geq \int_M \hc(|\theta_x-\theta_y\cos\angle(x\p y)|)\rho_k(y)\d y, 
\end{equation}
where we used (see \eqref{eqn:dxy-ineq})
\[
\dist(x, y)^2\geq \theta_x^2+\theta_y^2-2\theta_x\theta_y\cos\angle(x\p y)\geq(\theta_x-\theta_y\cos\angle(x\p y))^2.
\]
Also, since $\hc$ is non-decreasing and convex, its even extension $\hc(|\cdot|)$ is also convex and we can apply Jensen's inequality to get
\begin{equation}
\label{eqn:hc-Jensen}
 \int_M \hc(|\theta_x-\theta_y\cos\angle(x\p y)|)\rho_k(y)\d y\geq \hc\left(\left|\int_M (\theta_x-\theta_y\cos\angle(x\p y))\rho_k(y)\d y\right|\right),
\end{equation}
for any $x$ fixed. 

As $\rho_k\in\mathcal{P}_\p(M)$, for any $x\neq \p$ we get
\[
0=\log_\p x\cdot\int_M \log_\p y\rho_k(y)\d y=\theta_x\int_M \theta_y\cos\angle(x\p y)\rho_k(y)\d y,
\]
which implies
\begin{equation}
\label{eqn:int-cos}
\int_M \theta_y\cos\angle(x\p y)\rho_k(y)\d y=0.
\end{equation}
Therefore,
\[
\int_M (\theta_x-\theta_y\cos\angle(x\p y))\rho_k(y)\d y=\theta_x,
\]
which used in \eqref{eqn:hc-Jensen}, and then combined with \eqref{eqn:hc-abs}, it leads to
\[
 \int_M \hc(\dist(x, y))\rho_k(y) \d y\geq \hc(\theta_x),
\]
for all $x$ fixed.

Finally, multiply by $\rho_k(x)$ both sides of the equation above and integrate with respect to $x$ to get
\[
\iint_{M \times M} \hc(\dist(x, y))\rho_k(x)\rho_k(y)\d x\d y \geq  \int_M \hc(\theta_x)\rho_k(x)\d x.
\]
The claim of the lemma now follows from the left-hand-side above being uniformly bounded in $k$.
\end{proof}

\begin{lemma}
\label{lem:Jint-cont}
Assume $M$ and $h$ satisfy the assumptions in Theorem \ref{thm:exist-gen}, and let $\p \in M$ be a fixed pole. Take a sequence $\{\rho_k\}_{k\in\mathbb{N}} \subset \calP_{\p}(M)$ such that $E[\rho_k]$ is uniformly bounded from above, and $\rho_k \rightharpoonup \rho_0$ weakly (as measures) as $k \to \infty$. Then we have
\[
\lim_{k\to\infty}\int_M \rho_k(x)\log|J(f^{-1})(f(x))|\d x=\int_M \rho_0(x)\log|J(f^{-1})(f(x))|\d x,
\]
where $f$ denotes the Riemannian logarithm map at $\p$ -- see \eqref{eqn:R-log}.
\end{lemma}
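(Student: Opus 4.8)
The plan is to treat this as a uniform-integrability statement for the (unbounded, but controlled-growth) integrand against weakly convergent measures. Write $g(x):=\log\bigl|J(f^{-1})(f(x))\bigr|$. Since $\exp_\p$ is a global diffeomorphism on the Cartan--Hadamard manifold $M$, its Jacobian never vanishes, so $g$ is a smooth, in particular continuous, function on $M$. Moreover, applying Theorem \ref{lemma:Chavel-thms} with the lower curvature bound $-c_m$ (and using that $\|f(x)\|=\theta_x$), together with the elementary estimate $\log(\sinh t/t)\le t$ recorded in \eqref{eqn:est-sinh}, gives the two-sided bound
\[
0\le g(x)\le (\dm-1)\log\left(\frac{\sinh(\sqrt{c_m}\,\theta_x)}{\sqrt{c_m}\,\theta_x}\right)\le (\dm-1)\sqrt{c_m}\,\theta_x,\qquad\text{for all }x\in M.
\]
The obstruction to concluding at once is precisely that $g$ is unbounded; weak convergence of measures only tests against bounded continuous functions, and $g$ grows linearly in $\theta_x$. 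The whole point is that Lemma \ref{lem:hc-rho} furnishes a \emph{superlinear} moment control, and this gap is exactly what the hypothesis \eqref{eqn:hc} is designed to exploit.

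First I would invoke Lemma \ref{lem:hc-rho}: since $E[\rho_k]$ is uniformly bounded above and $\{\rho_k\}\subset\calP_\p(M)$, the quantity $M_0:=\sup_{k}\int_M\hc(\theta_x)\rho_k(x)\,\d x$ is finite, where $\hc$ is the non-decreasing convex function from \eqref{eqn:h-hyp}, \eqref{eqn:hc}. Because $\hc(\theta_x)$ is nonnegative and lower semicontinuous and $\rho_k\rightharpoonup\rho_0$, the portmanteau theorem (for nonnegative lower semicontinuous functions) yields $\int_M\hc(\theta_x)\rho_0(x)\,\d x\le M_0$ as well. Next, fix $\epsilon>0$. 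Since $\lim_{\theta\to\infty}\hc(\theta)/\theta=\infty$, there is $T>0$ with $(\dm-1)\sqrt{c_m}\,\theta\le\epsilon\,\hc(\theta)$ for all $\theta\ge T$; combined with the bound on $g$ above, this gives the uniform tail estimate
\[
\int_{\{\theta_x\ge T\}} g(x)\,\rho(x)\,\d x\le \epsilon\int_M\hc(\theta_x)\rho(x)\,\d x\le \epsilon M_0,
\qquad\text{for every }\rho\in\{\rho_k\}_{k\in\mathbb N}\cup\{\rho_0\}.
\]

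Then I would truncate: set $R:=(\dm-1)\sqrt{c_m}\,T$ and $g_R:=\min(g,R)$, a bounded continuous function on $M$. The bound on $g$ forces $g(x)<R$ whenever $\theta_x<T$, so $\{g\ge R\}\subseteq\{\theta_x\ge T\}$ and hence $0\le g-g_R\le g\,\mathbbm{1}_{\{\theta_x\ge T\}}$; by the previous display, $\int_M\bigl(g(x)-g_R(x)\bigr)\rho(x)\,\d x\le\epsilon M_0$ uniformly for $\rho\in\{\rho_k\}_{k\in\mathbb N}\cup\{\rho_0\}$. Writing
\[
\left|\int_M g(x)\rho_k(x)\,\d x-\int_M g(x)\rho_0(x)\,\d x\right|
\le \int_M (g-g_R)\rho_k\,\d x+\left|\int_M g_R\rho_k\,\d x-\int_M g_R\rho_0\,\d x\right|+\int_M (g-g_R)\rho_0\,\d x,
\]
and using that $g_R$ is bounded and continuous so that the middle term tends to $0$ as $k\to\infty$ by weak convergence, one obtains $\limsup_{k\to\infty}\bigl|\int_M g\rho_k\,\d x-\int_M g\rho_0\,\d x\bigr|\le 2\epsilon M_0$. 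Letting $\epsilon\to0$ finishes the proof. The only genuinely delicate step is the interplay, in the second paragraph, between the linear upper bound on $g$ coming from the curvature comparison and the superlinear moment bound coming from the energy; this is where assumption \eqref{eqn:hc} is indispensable, and it is the structural reason superlinear growth of $h$ is imposed in Theorem \ref{thm:exist-gen}.
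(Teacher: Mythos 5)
Your proof is correct and follows essentially the same route as the paper's: both rest on the linear upper bound $\log|J(f^{-1})(f(x))|\le(\dm-1)\sqrt{c_m}\,\theta_x$ from Theorem \ref{lemma:Chavel-thms}, the uniform superlinear moment bound supplied by Lemma \ref{lem:hc-rho}, a truncation, and weak convergence on the bounded remainder. The only differences are cosmetic and, if anything, in your favour: truncating the integrand via the globally continuous $g_R=\min(g,R)$ sidesteps the paper's passage to the limit of integrals over the set $\{\theta_x\le R\}$, and deducing $\int_M\hc(\theta_x)\rho_0(x)\,\d x\le M_0$ from the portmanteau/Fatou inequality replaces the paper's separate contradiction argument for the finiteness of $\int_M \rho_0(x)\log|J(f^{-1})(f(x))|\,\d x$.
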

\begin{proof}
By Proposition \ref{prop:E-W1}, since $E[\rho_k]$ is uniformly bounded above, 
\[
\int_M \theta_x \rho_k(x) \d x = \calW_1(\rho_k,\delta_0) 
\]
is also uniformly bounded above. Consequently,  $\displaystyle \int_M\rho_k(x)\log|J(f^{-1})(f(x))|\d x$ is uniformly bounded above, as by Theorem \ref{lemma:Chavel-thms} and \eqref{eqn:est-sinh}, one has
\begin{equation}
\label{eqn:d-ineq}
\log|J(f^{-1})(f(x))|\leq (\dm-1)\log\left(\frac{\sinh(\sqrt{c_m}\theta_x)}{\sqrt{c_m}\theta_x}\right)\leq (\dm-1)\sqrt{c_m}\theta_x.
\end{equation}
We also note that 
\begin{equation}
\label{eqn:logJ-pos}
\log|J(f^{-1})(f(x))| \geq 0, \qquad \text{ for all } x \in M,
\end{equation}
as $ |J(f^{-1})(f(x))| \geq 1$ (this can be derived by taking $c_M \to 0^{-}$ in the lower bound on $|J|$ in Theorem \ref{lemma:Chavel-thms}).

Define
\[
U:=\sup_k \int_M\rho_k(x)\log|J(f^{-1})(f(x))|\d x <\infty.
\]
We first claim that 
\begin{equation}
\label{eqn:int-rho0}
\int_M\rho_0(x)\log|J(f^{-1})(f(x))|\d x< \infty.
\end{equation}
Indeed, assume by contradiction that $\displaystyle \int_M \rho_0(x)\log|J(f^{-1})(f(x))|\d x=\infty$. Then there exists $R>0$ such that
\begin{equation}
\label{eqn:int-rho0J}
\int_{\theta_x\leq R}\rho_0(x)\log|J(f^{-1})(f(x))|\d x\geq U+1.
\end{equation}
Since $\log|J(f^{-1})(f(x))|$ is continuous and bounded on the compact set $\{x\in M:\theta_x\leq R\}$, and $\rho_k$ converges to $\rho_0$ weakly as $k\to\infty$, we get
\begin{equation}
\label{eqn:pass-limit}
\lim_{k\to\infty}\int_{\theta_x\leq R}\rho_k(x)\log|J(f^{-1})(f(x))|\d x =\int_{\theta_x\leq R}\rho_0(x)\log|J(f^{-1})(f(x))|\d x. 
% & \geq U+1.
\end{equation}
Hence, by \eqref{eqn:logJ-pos}, \eqref{eqn:int-rho0J} and \eqref{eqn:pass-limit}, there exists $\tilde{k}$ such that
\begin{align*}
\int_{M}\rho_{\tilde{k}}(x)\log|J(f^{-1})(f(x))|\d x &\geq \int_{\theta_x\leq R}\rho_{\tilde{k}}(x)\log|J(f^{-1})(f(x))|\d x \\[2pt]
&\geq U+\frac{1}{2},
\end{align*}
but this contradicts the definition of $U$.

Fix $\epsilon>0$. By \eqref{eqn:int-rho0}, there exists $R_1>0$ such that
\begin{equation}
\label{eqn:rho0-eps}
\int_{\theta_x>R_1}\rho_0(x)\log|J(f^{-1})(f(x))|\d x<\epsilon.
\end{equation}
On the other hand, since $\hc$ satisfies
\[
\lim_{\theta\to\infty}\frac{\hc(\theta)}{\theta}=\infty,
\]
there exists $R_2>0$ such that
\begin{equation}
\label{eqn:l-eps}
0< \frac{\theta}{\hc(\theta)} <\epsilon, \qquad \text{ for all } \theta>R_2.
\end{equation}
Then, using \eqref{eqn:d-ineq} and \eqref{eqn:l-eps} we can derive
\begin{align*}
\int_{\theta_x>R_2}\rho_k(x)\log|J(f^{-1})(f(x))|\d x&\leq (\dm-1)\sqrt{c_m}\int_{\theta_x>R_2}\rho_k(x)\theta_x\d x\\ 
&= (\dm-1)\sqrt{c_m}\int_{\theta_x>R_2}\rho_k(x)\hc(\theta_x)\left(\frac{\theta_x}{\hc(\theta_x)}\right)\d x\\[2pt]
&\leq (\dm-1)\sqrt{c_m}\left\|\frac{\theta}{\hc(\theta)}\right\|_{L^\infty((R_2, \infty))}\int_{\theta_x > R_2} \rho_k(x) \hc(\theta_x)\d x\\ 
&\leq  (\dm-1)\sqrt{c_m}\epsilon\int_M \rho_k(x)\hc(\theta_x)\d x. 
\end{align*}
Since by Lemma \ref{lem:hc-rho}, $\displaystyle \int_M \hc(\theta_x)\rho_k(x)\d x$ is uniformly bounded in $k$ (call this uniform bound $\overline{U}$), from the above we get
\begin{equation}
\label{eqn:rhok-eps}
\int_{\theta_x>R_2}\rho_k(x)\log|J(f^{-1})(f(x))|\d x \leq (\dm-1)\sqrt{c_m}\,  \overline{U} \epsilon, \qquad \text{ for all } k \geq 1.
\end{equation}

Set $\tilde{R}:=\max(R_1, R_2)$; note that $\tilde{R}$ does not depend on $k$. Using the triangle inequality, equation \eqref{eqn:pass-limit} with $\tilde{R}$ in place of $R$, \eqref{eqn:rho0-eps} and \eqref{eqn:rhok-eps}, we get
\begin{align*}
&\limsup_{k\to\infty}\left|\int_M \rho_k(x)\log|J(f^{-1})(f(x))|\d x-\int_M \rho_0(x)\log|J(f^{-1})(f(x))|\d x\right|\\
& \quad \leq \limsup_{k\to\infty}\left|\int_{\theta_x\leq \tilde{R}}\rho_k(x)\log|J(f^{-1})(f(x))|\d x-\int_{\theta_x\leq \tilde{R}}\rho_0(x)\log|J(f^{-1})(f(x))|\d x\right|\\
% &\qquad + \limsup_{k\to\infty}\left|\int_{\theta_x> \tilde{R}}\rho_k(x)\log|J(f^{-1})(f(x))|\d x-\int_{\theta_x> \tilde{R}}\rho_0(x)\log|J(f^{-1})(f(x))|\d x\right|\\
&\qquad + \limsup_{k\to\infty}\left|\int_{\theta_x> \tilde{R}}\rho_k(x)\log|J(f^{-1})(f(x))|\d x\right|+ \limsup_{k \to \infty} \left|\int_{\theta_x> \tilde{R}}\rho_0(x)\log|J(f^{-1})(f(x))|\d x\right|\\[2pt]
&\quad \leq 0 +  (\dm-1)\sqrt{c_m}\overline{U} \epsilon +\epsilon.
\end{align*}
Since $\epsilon>0$ is arbitrary, we infer the claim of the lemma.
\end{proof}

\begin{remark}
\label{rmk:fmom-cont}
\normalfont
Using the same arguments as in the proof of Lemma \ref{lem:Jint-cont}, one can show that under the same assumptions, it also holds that
\[
\lim_{k\to\infty}\int_M \theta_x \rho_k(x)\d x=\int_M \theta_x \rho_0(x)\d x.
\]
Weak convergence together with convergence of first moments is equivalent to convergence in $\calW_1$ \cite{AGS2005}. This implies that for a sequence $\rho_k$ that satisfies the assumptions in Lemma \ref{lem:Jint-cont}, we also have
\[
\calW_1(\rho_k,\rho_0) \to 0, \qquad \text{ as } k \to \infty.
\]
\end{remark}

\begin{proposition}[Lower semi-continuity of the energy]
\label{prop:lsc}
Assume $M$ and $h$ satisfy the assumptions in Theorem \ref{thm:exist-gen}. Fix a pole $\p \in M$ and suppose $\{\rho_k\}_{k\in\mathbb{N}} \subset \calP_{\p}(M)$ is a sequence of probability densities such that $E[\rho_k]$ is uniformly bounded from above, and $\rho_k \rightharpoonup \rho_0$ weakly (as measures) as $k \to \infty$. Then, the energy $E$ is lower semi-continuous along $\rho_k$, i.e.,
\[
\liminf_{k\to\infty}E[\rho_k]\geq E[\rho_0].
\]
\end{proposition}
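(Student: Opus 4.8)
The plan is to split the energy into a part handled by a portmanteau argument and a ``regularized entropy'' part, which is transported to the tangent space $T_\p M$; there the problem reduces, modulo correction terms that are either continuous or nonnegative and lower semi-continuous along the sequence, to a lower semicontinuity statement on $\bbr^\dm$.

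First I would record the consequences of the hypothesis that $E[\rho_k]$ is uniformly bounded above. By Proposition \ref{prop:E-W1}, $\int_M\theta_x\rho_k(x)\,\d x=\calW_1(\rho_k,\delta_\p)$ is uniformly bounded; by Lemma \ref{l33}, $\int_M\rho_k\log\rho_k$ is uniformly bounded above; hence by Lemma \ref{lemma:unif-int} the family $\{\rho_k\}$ is uniformly integrable on $M$. Together with the first-moment bound (which gives tightness) and the Dunford-Pettis theorem, this yields that $\rho_0$ is absolutely continuous, that $\rho_k\rightharpoonup\rho_0$ weakly in $L^1(M)$, and that $\rho_k\otimes\rho_k\rightharpoonup\rho_0\otimes\rho_0$ weakly as measures on $M\times M$; in particular $E[\rho_0]$ is given by the explicit formula in \eqref{eqn:energy}. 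By Remark \ref{rmk:fmom-cont}, $\calW_1(\rho_k,\rho_0)\to 0$, and since $f=\log_\p$ is $1$-Lipschitz by \eqref{eqn:dist-ineq} one also gets $\rho_0\in\calP_\p(M)$.

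Next, following the decomposition behind \eqref{eqn:Erho-lb1}, write $E[\rho]=\mathcal{F}[\rho]+\tfrac12\iint_{M\times M}\Phi(\dist(x,y))\,\rho(x)\rho(y)\,\d x\,\d y$, where
\[
\Phi(\theta)=h(\theta)-2\dm\log\theta-2(\dm-1)\log\frac{\sinh(\sqrt{c_m}\,\theta)}{\sqrt{c_m}\,\theta},
\]
\[
\mathcal{F}[\rho]=\int_M\rho\log\rho\,\d x+\iint_{M\times M}\left(\dm\log\dist(x,y)+(\dm-1)\log\frac{\sinh(\sqrt{c_m}\,\dist(x,y))}{\sqrt{c_m}\,\dist(x,y)}\right)\rho(x)\rho(y)\,\d x\,\d y.
\]
By Lemma \ref{Lem6.2}, $\Phi$ is lower semi-continuous on $[0,\infty)$ and bounded below (indeed $\Phi(\theta)\to+\infty$ as $\theta\to0+$), so the portmanteau theorem applied to $\rho_k\otimes\rho_k\rightharpoonup\rho_0\otimes\rho_0$ gives $\liminf_k\iint\Phi(\dist(x,y))\rho_k\rho_k\ge\iint\Phi(\dist(x,y))\rho_0\rho_0$. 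It thus suffices to prove $\liminf_k\mathcal{F}[\rho_k]\ge\mathcal{F}[\rho_0]$. For this I would transport the entropy to $T_\p M$ via \eqref{eqn:HLS-deriv3}, $\int_M\rho\log\rho=\int_{T_\p M}f_\#\rho\,\log f_\#\rho-\int_M\rho(x)\log|J(f^{-1})(f(x))|\,\d x$, and note that by Lemma \ref{lem:Jint-cont} the last integral converges to its value at $\rho_0$. The Rauch inequality \eqref{eqn:dist-ineq} then lets one write
\[
\dm\iint_M\log\dist(x,y)\,\rho(x)\rho(y)\,\d x\,\d y=\dm\iint_{T_\p M}\log|w-z|\,f_\#\rho(w)\,f_\#\rho(z)\,\d w\,\d z+\dm\iint_M\log\frac{\dist(x,y)}{|f(x)-f(y)|}\,\rho(x)\rho(y)\,\d x\,\d y,
\]
where the last integrand is nonnegative and continuous; the $(\dm-1)\log(\sinh(\cdot)/\cdot)$ term of $\mathcal{F}$ likewise has nonnegative continuous integrand, so for all these correction terms the portmanteau theorem gives $\liminf_k\ge$ (value at $\rho_0$). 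After these reductions the claim becomes the lower semicontinuity of the Euclidean logarithmic HLS functional $\sigma\mapsto\int_{\bbr^\dm}\sigma\log\sigma+\dm\iint_{\bbr^\dm}\log|w-z|\,\sigma\,\sigma$ along $\sigma_k=f_\#\rho_k$; these converge weakly in $L^1(\bbr^\dm)$ to $f_\#\rho_0$, are uniformly integrable, and obey a uniform bound on $\int_{\bbr^\dm}\log(1+|z|^2)\sigma_k(z)\,\d z$, so this is exactly (the argument behind) the corresponding lower semicontinuity step in the Euclidean treatment of \cite{CaDePa2019}. Reassembling the transport identities at $\rho_0$, and combining with the $\Phi$-estimate, yields $\liminf_k E[\rho_k]\ge E[\rho_0]$.

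The main obstacle is exactly this last reduction: the logarithmic singularity of $h$, equivalently the singular negative part $-\log\dist(x,y)$ near the diagonal, cannot be controlled by a portmanteau argument on its own, since as a lower (rather than upper) semi-continuous integrand it behaves the wrong way under mass concentration; it must be absorbed into the entropy, and only the HLS-controlled combination of entropy and logarithmic interaction is lower semi-continuous. Making this rigorous requires the uniform integrability from Lemma \ref{lemma:unif-int} to control the entropy -- which on the non-compact, infinite-volume manifold $M$ is not bounded below by a constant -- in tandem with the weak $L^1$ convergence and the Euclidean logarithmic HLS lower semicontinuity; this is where the bulk of the work sits.
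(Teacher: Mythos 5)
Your proposal is correct in outline but routes the interaction term quite differently from the paper. The entropy part is handled identically (transport to $T_\p M$ via \eqref{eqn:HLS-deriv3}, Euclidean entropy lower semi-continuity, and Lemma \ref{lem:Jint-cont}). For the interaction energy, however, the paper proves lower semi-continuity of $\iint h(\dist(x,y))\rho_k\rho_k$ \emph{directly on $M$}: it splits near/far from the diagonal, controls the near-diagonal logarithmic singularity with the inequality $ab\le e^a+b\log b-b$ together with the uniform entropy bound and uniform integrability, and handles the far field by truncation and weak convergence of $\rho_k\otimes\rho_k$. You instead peel off the critical part $2\dm\log\theta+2(\dm-1)\log(\sinh(\sqrt{c_m}\theta)/(\sqrt{c_m}\theta))$, treat the remainder $\Phi$ by a clean portmanteau argument (legitimate, since Lemma \ref{Lem6.2} gives $\Phi\ge C+\epsilon\theta$, so $\Phi\circ\dist$ is lower semi-continuous and bounded below, with the diagonal a null set for absolutely continuous measures), and push the singular logarithm onto $T_\p M$ where it combines with the Euclidean entropy into the logarithmic HLS functional. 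Your version buys a tidier far-field argument and a slicker proof that $\rho_0\in\calP_\p(M)$; the paper's version stays self-contained on the manifold.

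Two points need more care. First, the correction integrand $\log\bigl(\dist(x,y)/|f(x)-f(y)|\bigr)$ is \emph{not} continuous on the diagonal (the ratio has direction-dependent limits there); it is only nonnegative and lower semi-continuous, which is all the portmanteau step needs, but you should say so, and you should verify that its integral against $\rho_0\otimes\rho_0$ is finite so that the reassembly involves no $\infty-\infty$ (this does follow, since $\iint\log\dist\,\rho_0\rho_0$ and $\iint\log|f-f|\,\sigma_0\sigma_0$ are each finite once $\sigma_0$ has finite entropy and first moment). Second, and more importantly, your citation of \cite{CaDePa2019} for the lower semi-continuity of the \emph{combined} functional $\sigma\mapsto\int\sigma\log\sigma+\dm\iint\log|w-z|\sigma\sigma$ under weak $L^1$ convergence with uniform integrability and moment bounds is carrying the entire weight of the hardest step; the paper only cites that reference for the entropy alone and reproves the logarithmic-interaction control itself. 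You should either locate that combined statement precisely in the reference or reproduce the near-diagonal estimate (the $ab\le e^a+b\log b-b$ argument) on $\bbr^\dm$ for $\sigma_k=f_\#\rho_k$, after checking that $\{\sigma_k\}$ inherits the uniform entropy bound (it does, since $\log|J(f^{-1})|\ge 0$) and uniform integrability. With those two repairs the argument is complete.
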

\begin{proof}
{\em \underline{Part 1.}}  {\em (Lower semi-continuity of the entropy)}

The lower semi-continuity of the entropy $\int_M \rho(x) \log \rho(x) \d x$  follows from the lower semi-continuity of the entropy in $\bbr^\dm$ \cite{CaDePa2019} and Lemma \ref{lem:Jint-cont}.  Indeed, from \eqref{eqn:HLS-deriv3}, we can write
\[
\int_M \rho_k(x)\log\rho_k(x)\d x=\int_{\bbr^\dm}f_\#\rho_k(u)\log f_\#\rho_k(u) \d u-\int_M \rho_k(x)\log |J(f^{-1})(f(x))|\d x,
\]
for all $k \geq 1$, where $f$ is the Riemannian logarithm map at $\p$.  Note that the first term in the right-hand-side above is the entropy of $f_\# \rho_k$ on $T_\p M\simeq \mathbb{R}^\dm$.  By the change of variable formula, it is immediate to show that $\rho_{k} \rightharpoonup \rho_0$ implies $f_\# \rho_{k} \rightharpoonup f_\# \rho_0$ as $k \to \infty$. Combine these observations with the lower semi-continuity of the entropy functional in $\bbr^\dm$ \cite{CaDePa2019} and Lemma \ref{lem:Jint-cont}, to get
\begin{equation}
\label{eqn:lsc-entropy}
\begin{aligned}
&  \liminf_{k \to\infty} \int_M \rho_{k}(x)\log\rho_{k}(x)\d x \\
& \qquad \geq  \liminf_{k \to \infty} \int_{\bbr^\dm}f_\#\rho_k(u)\log f_\#\rho_k(u) \d u- \lim_{k \to \infty} \int_M \rho_k(x)\log |J(f^{-1})(f(x))|\d x \\
& \qquad \geq  \int_{\bbr^\dm}f_\#\rho_0(u)\log f_\#\rho_0(u) \d u-\int_M \rho_0(x)\log |J(f^{-1})(f(x))|\d x\\
& \qquad = \int_M \rho_0(x)\log\rho_0(x)\d x,
 \end{aligned}
\end{equation}
where for the equal sign we used \eqref{eqn:HLS-deriv3} again.
\medskip

{\em \underline{Part 2.}}  {\em (Lower semi-continuity of the interaction energy)}

Define $C$ as
\[
C=\begin{cases}
0 \quad&\text{if}\quad \lim_{\theta\to 0+}h(\theta)\leq-1,\\
-\lim_{\theta\to 0+}h(\theta)-1&\text{otherwise}.
\end{cases}
\]
Note that $-\infty<C\leq 0$, as $\lim_{\theta\to 0+}h(\theta)<\infty$. In particular, if $h$ is singular at origin, i.e., $ \lim_{\theta\to 0+}h(\theta)=-\infty$, then $C=0$. 
\smallskip

{\em Behaviour near $0$.} By the definition of $C$, we have
\begin{equation}
\label{eqn:lim-hpc}
\lim_{\theta\to 0+}(h(\theta)+C)\leq-1.
\end{equation}
Hence, there exists $0<\theta_1<1$ such that
\begin{equation}
\label{eqn:hpC-neg}
h(\theta)+C<0, \qquad\forall 0<\theta<\theta_1.
\end{equation}
On the other hand, from the limiting behaviour of $h$ at $0$ (see \eqref{eqn:h-hyp}), one has
\[
\liminf_{\theta\to0+}\left(h(\theta)+C-2\dm\log\theta\right)=\infty.
\]

We then infer that there exists $0<\theta_2<\theta_1$ such that
\begin{equation*}
2\dm \log\theta \leq h(\theta)+C < 0, \qquad\forall 0<\theta<\theta_2.
\end{equation*}
Using this double inequality, for any $\delta< \theta_2$, we get
\begin{align}
\Biggl|\iint_{\dist(x, y)<\delta}(h(\dist(x, y))+C)\rho_k(x)\rho_k(y)\d x\d y\Biggr|
& \leq2\dm\Biggl |\iint_{\dist(x, y)<\delta}\log(\dist(x, y))\rho_k(x)\rho_k(y)\d x\d y\Biggr | \nonumber \\[2pt]
&= 2 \dm \iint_{\dist(x, y)<\delta}\log\left(\frac{1}{\dist(x, y)}\right)\rho_k(x)\rho_k(y)\d x\d y, \label{eqn:hpc-ineq}
\end{align}
where for the equal sign we used $\delta<1$. 

To estimate the right-hand-side in \eqref{eqn:hpc-ineq} we will use the following inequality:
\[
ab\leq e^a+b\log b-b,
\]
that holds for all $a,b \geq 0$ \cite{BlCaCa2008}.  Take $a=\beta\log\left(\frac{1}{\dist(x, y)}\right)$ and $b=\beta^{-1}\rho_k(y)$, with $\beta>0$ to be set later. Then, we have
\begin{equation}
\label{eqn:log-1od}
\begin{aligned}
& \iint_{\dist(x, y)<\delta}\log\left(\frac{1}{\dist(x, y)}\right)\rho_k(x)\rho_k(y)\d x\d y  \\
& \quad \leq \iint_{\dist(x, y)<\delta}\rho_k(x)\left(\frac{1}{\dist(x, y)^\beta}+\beta^{-1}\rho_k(y)\log(\beta^{-1}\rho_k(y))-\beta^{-1}\rho_k(y)\right)\d x\d y  \\
& \quad \leq \int_M \rho_k(x)\left(\int_{\dist(x, y)<\delta}\frac{\d y}{\dist(x, y)^\beta}\right)\d x+\beta^{-1}\int_M \rho_k(y)\log(\beta^{-1}\rho_k(y))\left(\int_{\dist(x, y)<\delta}\rho_k(x)\d x\right)\d y.
\end{aligned}
\end{equation}

If we choose $0<\beta< \dm$ then 
\[
\int_{\dist(x, y)<\delta}\frac{\d y}{\dist(x, y)^\beta}
\]
converges to zero as $\delta\to 0+$. We also note that since $E[\rho_k]$ is uniformly bounded above, so is $\displaystyle \int_M \rho_k(x) \log \rho_k(x) \d x$ (by Lemma \ref{l33}) and $\displaystyle \int_M \theta_x \rho_k(x) \d x$ (by Proposition  \ref{prop:E-W1}). Also, by Lemma \ref{lemma:unif-int}, $\{ \rho_k \}_{k \in \mathbb{N}}$ is uniformly integrable on $M$. 

Fix $\epsilon>0$ arbitrary small. By using the considerations above, and combining \eqref{eqn:hpc-ineq} with \eqref{eqn:log-1od}, we can conclude that for sufficiently small $\delta>0$, we can make
\begin{equation}
\label{hsmalldist}
\Biggl |\iint_{\dist(x, y)<\delta}(h(\dist(x, y))+C)\rho_k(x)\rho_k(y)\d x\d y\Biggr |<\epsilon, \qquad \text{ for all } k \geq 1.
\end{equation}
\smallskip

{\em Behaviour away from $0$.}  \underline{Claim 1}: For $\delta>0$ fixed (cf., \eqref{hsmalldist}),
\[
\iint_{\dist(x, y)\geq \delta}(h(\dist(x, y))+C)\rho_k(x)\rho_k(y)\d x\d y
\]
is uniformly bounded above in $k$, with an upper bound that does not depend on $\epsilon$.

This can be shown using $\log(\dist(x,y)) < \dist(x,y)$ in \eqref{eqn:E-geq} and then triangle inequality, by bounding the energy of $\rho_k$ as
\begin{align*}
E[\rho_k]& \geq -C_0\dm +\frac{1}{2}\iint_{M\times M}
\left( h(\dist(x, y))-2 \dm \dist(x, y)-2(\dm-1)\sqrt{c_m}\dist(x, y) \right)\rho_k(x)\rho_k(y)\d x\d y\\
&\geq -C_0\dm +\frac{1}{2}\iint_{M\times M}\left( h(\dist(x, y))-2\dm(\theta_x+\theta_y)-2(\dm-1)\sqrt{c_m}(\theta_x+\theta_y) \right)\rho_k(x)\rho_k(y)\d x\d y\\
&= -C_0\dm- 2(\dm+(\dm-1)\sqrt{c_m})\int_M \theta_x\rho_k (x)\d x+\frac{1}{2}\iint_{M \times M} h(\dist(x, y))\rho_k(x)\rho_k(y)\d x \d y.
\end{align*}
Then, use that $E[\rho_k]$ is uniformly bounded from above, along with the uniform boundedness of the first moments of $\rho_k$ (by Proposition \ref{prop:E-W1}) to infer that
\[
\iint_{M \times M} h(\dist(x, y))\rho_k(x)\rho_k(y)\d x\d y
\]
is uniformly bounded from above. This property together with $ \iint_{M \times M} C \rho_k(x)\rho_k(y)\d x \d y = C$ and \eqref{hsmalldist} (use $\epsilon <1$ for instance) lead to Claim 1. 

Set $U$ (independent of $\epsilon$) such that 
\[
\iint_{\dist(x, y)\geq \delta}(h(\dist(x, y))+C)\rho_k(x)\rho_k(y)\d x\d y \leq U, \qquad \text{ for all } k \geq 1.
\]
\smallskip

\underline{Claim 2}: For $\delta>0$ fixed as above,
\begin{equation}
\label{eqn:iint-ldelta}
\displaystyle  \iint_{\dist(x, y)\geq \delta}(h(\dist(x, y))+C)\rho_0(x)\rho_0(y)\d x\d y < \infty.
\end{equation}

To show this, assume by contradiction that
\begin{equation}
\label{eqn:int-inf}
\iint_{\dist(x, y)\geq \delta}(h(\dist(x, y))+C)\rho_0(x)\rho_0(y)\d x\d y=\infty.
\end{equation}
First note that since $\lim_{\theta\to\infty}(h(\theta)+C)=\infty$, there exists $\tilde{R}>0$ such that 
\begin{equation}
\label{eqn:tildeR}
h(\theta)+C>0 \qquad \text{ for all } \theta\geq\tilde{R}.
\end{equation}
From \eqref{eqn:int-inf}, there exists then $R> \tilde{R}$ such that
\begin{equation}
\label{eqn:tR}
\iint_{\delta\leq \dist(x, y)\leq R }(h(\dist(x, y))+C)\rho_0(x)\rho_0(y)\d x\d y\geq U+1.
\end{equation}

By \cite[Lemma 2.3]{BlCaCa2008}, the weak convergence of $\rho_k \rightharpoonup \rho$ implies that $\rho_k \otimes \rho_k\rightharpoonup \rho \otimes \rho$ weakly (as measures) as $k \to \infty$. Hence, we have
\begin{equation}
\label{eqn:lim-intdtR}
\lim_{k\to\infty}\iint_{\delta\leq \dist(x, y)\leq R}(h(\dist(x, y))+C)\rho_k(x)\rho_k(y)\d x\d y=
\iint_{\delta\leq \dist(x, y)\leq R}(h(\dist(x, y))+C)\rho_0(x)\rho_0(y)\d x\d y.
\end{equation}
From \eqref{eqn:tR} and \eqref{eqn:lim-intdtR}, there exists $\tilde{k}$ such that
\begin{align*}
\iint_{\delta\leq \dist(x, y)}(h(\dist(x, y))+C)\rho_{\tilde{k}}(x)\rho_{\tilde{k}}(y)\d x\d y &\geq \iint_{\delta\leq \dist(x, y)\leq R}(h(\dist(x, y))+C)\rho_{\tilde{k}}(x)\rho_{\tilde{k}}(y)\d x\d y \\
&>U + \frac{1}{2}.
\end{align*}
This contradicts the definition of $U$, showing Claim 2. 
%So we can conclude
%\[
%\lim_{R\to\infty}\iint_{\delta\leq \dist(x, y)\leq R}(h(\dist(x, y))+C)\rho_0(x)\rho_0(y)\d x\d y<\infty.
%\]

We now combine the two behaviours (near $0$ and away from $0$) to show the lower semi-continuity of the interaction energy. From \eqref{eqn:iint-ldelta}, there exists $R_0>\tilde{R}$ such that
\begin{equation}
\label{eqn:iint-dgdelta}
0\leq\iint_{\dist(x, y)> R_0}(h(\dist(x, y))+C)\rho_0(x)\rho_0(y)\d x\d y<\epsilon.
\end{equation}
Recall that $h(\theta) + C <0$ for $0<\theta<\delta$ (see \eqref{eqn:lim-hpc}). Using this fact together with \eqref{eqn:iint-dgdelta} and the weak convergence $\rho_k \otimes \rho_k\rightharpoonup \rho \otimes \rho$, we estimate:
\begin{equation}
\label{eqn:iint-est1}
\begin{aligned}
&\iint_{M \times M} (h(\dist(x, y))+C)\rho_0(x)\rho_0(y)\d x\d y  \\
&\quad =\iint_{\dist(x, y)<\delta} (h(\dist(x, y))+C)\rho_0(x)\rho_0(y)\d x\d y+\iint_{\delta\leq \dist(x, y)\leq R_0} (h(\dist(x, y))+C)\rho_0(x)\rho_0(y)\d x\d y\\
&\qquad +\iint_{R_0<\dist(x, y)} (h(\dist(x, y))+C)\rho_0(x)\rho_0(y)\d x\d y  \\
&\quad \leq 0+\lim_{k\to\infty}\iint_{\delta\leq \dist(x, y)\leq R_0} (h(\dist(x, y))+C)\rho_k(x)\rho_k(y)\d x\d y+\epsilon.
\end{aligned}
\end{equation}

On the other hand, by \eqref{hsmalldist}, for all $k \geq 1$, 
\begin{equation}
\label{eqn:iint-est2}
\begin{aligned}
& \iint_{\delta\leq \dist(x, y)\leq R_0} (h(\dist(x, y))+C)\rho_k(x)\rho_k(y)\d x\d y \\
& \hspace{1cm}  = \iint_{\dist(x, y)\leq R_0} (h(\dist(x, y))+C)\rho_k(x)\rho_k(y)\d x\d y - \iint_{\dist(x, y) < \delta} (h(\dist(x, y))+C)\rho_k(x)\rho_k(y)\d x\d y\\
& \hspace{1cm} < \iint_{\dist(x, y) \leq R_0} (h(\dist(x, y))+C)\rho_k(x)\rho_k(y)\d x\d y + \epsilon.
\end{aligned}
\end{equation}
Combining \eqref{eqn:iint-est1} and \eqref{eqn:iint-est2}, we find
\begin{equation}
\label{eqn:hpC-rho0-ub}
\begin{aligned}
&\iint_{M \times M} (h(\dist(x, y))+C)\rho_0(x)\rho_0(y)\d x\d y\\[2pt]
&\hspace{1cm} \leq \liminf_{k\to\infty}\Biggl (\iint_{\dist(x, y)\leq R_0}(h(\dist(x, y))+C)\rho_k(x)\rho_k(y)\d x\d y + \epsilon \Biggr)+\epsilon\\
%&\hspace{1cm} =\liminf_{k\to\infty}\iint_{\dist(x, y)\leq R_0}(h(\dist(x, y))+C)\rho_k(x)\rho_k(y)\d x\d y+2\epsilon \\
&\hspace{1cm} \leq \liminf_{k\to\infty}\iint_{M \times M} (h(\dist(x, y))+C)\rho_k(x)\rho_k(y)\d x\d y+2\epsilon,
\end{aligned}
\end{equation}
where for the second inequality we used $h(\theta)+C>0$ for any $\theta\geq R_0$ (as $R_0>\tilde{R}$).
%\begin{equation*}
%\iint_{M \times M} (h(\dist(x, y))+C)\rho_0(x)\rho_0(y)\d x\d y\leq \liminf_{k\to\infty}\iint_{M \times M} h(\dist(x, y))+C)\rho_k(x)\rho_k(y)\d x\d y+2\epsilon.
%\end{equation*}

Finally, since $\epsilon>0$ is arbitrary, we get
\[
\iint_{M \times M} (h(\dist(x, y))+C)\rho_0(x)\rho_0(y)\d x\d y\leq \liminf_{k\to\infty}\iint_{M \times M} (h(\dist(x, y))+C)\rho_k(x)\rho_k(y)\d x\d y,
\]
and from 
\[
\iint_{M \times M} C\rho_0(x)\rho_0(y)\d x\d y= \iint_{M \times M} C\rho_k(x)\rho_k(y)\d x\d y = C,
\]
we conclude
\[
\iint_{M \times M}  h(\dist(x, y))\rho_0(x)\rho_0(y)\d x\d y\leq \liminf_{k\to\infty}\iint_{M \times M} h(\dist(x, y))\rho_k(x)\rho_k(y)\d x\d y.
\]
\end{proof}

The last preliminary result we show concerns the conservation of the centre of mass. Regarding the notations, we make the following remark first. 
\begin{remark}
\label{rmk:balls}
\normalfont
We will concurrently use similar notations for geodesic open balls in $(M,\dist)$ and $(\calP_1(M),\calW_1)$. For instance, $B_\theta(\p)$ is the geodesic ball of radius $\theta$ and centre at $\p$, in the geodesic space $(M,\dist)$. On the other hand, $B_R(\delta_\p)$ denotes the geodesic ball in the space $(\calP_1(M),\calW_1)$, of radius $R$ and centre at $\delta_\p$.  In spite of similar notations, the space in which we consider geodesic balls will be clear from the context. 
\end{remark}

\begin{lemma}[Conservation of centre of mass]
\label{lemma:CM}
Let $M$ be a Cartan-Hadamard manifold, $\p \in M$ a fixed pole, $R>0$ a fixed radius,  and $\{\rho_k\}_{k\in\mathbb{N}}$ a sequence in $\overline{B_R(\delta_\p)}\cap \mathcal{P}_\p(M)$. Also assume that
\[
\int_M \hc(\theta_x) \rho_k(x) \d x
\]
is bounded above uniformly in $k \in \mathbb{N}$, where $\hc$ satisfies \eqref{eqn:hc}. Then, there exists a subsequence of $\{\rho_k\}_{k\in\mathbb{N}}$ which converges weakly as measures to $\rho_0\in \overline{B_R(\delta_\p)}\cap \mathcal{P}_\p(M)$.
\end{lemma}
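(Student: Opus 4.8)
The plan is to argue in two main steps: first extract a weakly convergent subsequence whose limit $\rho_0$ already lies in $\overline{B_R(\delta_\p)}$, and then show that $\rho_0$ still satisfies the centre-of-mass constraint. Since $\rho_k\in\overline{B_R(\delta_\p)}$ and the unique transport plan onto a Dirac mass is the trivial one, $\int_M\theta_x\,\rho_k(x)\,\d x=\calW_1(\rho_k,\delta_\p)\le R$ for every $k$. Markov's inequality then gives $\rho_k(M\setminus B_{R/\eta}(\p))\le\eta$ for all $\eta>0$, and since $M$ is complete its closed balls are compact, so $\{\rho_k\}$ is tight on the Polish space $M$; Prokhorov's theorem yields a subsequence (not relabelled) converging weakly to a probability measure $\rho_0$. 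Because $x\mapsto\theta_x$ is continuous and nonnegative, approximating $\theta_x$ from below by bounded continuous functions yields $\int_M\theta_x\,\d\rho_0(x)\le\liminf_k\int_M\theta_x\,\rho_k(x)\,\d x\le R$, so $\rho_0\in\calP_1(M)$ and $\calW_1(\rho_0,\delta_\p)\le R$, that is, $\rho_0\in\overline{B_R(\delta_\p)}$; the same reasoning applied to $\hc(\theta_x)-\hc(0)\ge0$ also shows $\int_M\hc(\theta_x)\,\d\rho_0(x)\le U$, where $U$ is the assumed uniform bound on $\int_M\hc(\theta_x)\,\rho_k(x)\,\d x$.

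For the centre of mass I would pass to the limit in $\int_M\log_\p x\,\rho_k(x)\,\d x=0$ by a truncation argument, using crucially the superlinear growth \eqref{eqn:hc}. Given $\e>0$, since $\hc$ is non-decreasing with $\hc(\theta)/\theta\to\infty$, I can fix $R_\e>0$ so large that $\hc>0$ on $(R_\e,\infty)$ and $\sup_{\theta>R_\e}\theta/\hc(\theta)$ is as small as needed; writing $\theta_x=(\theta_x/\hc(\theta_x))\,\hc(\theta_x)$ on $\{\theta_x>R_\e\}$ and using $\hc\ge\hc(0)$ on the complement, this gives $\int_{\theta_x>R_\e}\theta_x\,\rho_k(x)\,\d x\le\e$ uniformly in $k$, and likewise $\int_{\theta_x>R_\e}\theta_x\,\d\rho_0(x)\le\e$. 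Now fix a continuous cutoff $\psi:[0,\infty)\to[0,1]$ with $\psi\equiv1$ on $[0,R_\e]$ and $\psi\equiv0$ on $[2R_\e,\infty)$. Since $\exp_\p$ is a global diffeomorphism on a Cartan--Hadamard manifold, $\log_\p$ is continuous, so $x\mapsto(\log_\p x)\,\psi(\theta_x)$ is a bounded continuous $T_\p M$-valued function and weak convergence gives $\int_M(\log_\p x)\psi(\theta_x)\,\rho_k(x)\,\d x\to\int_M(\log_\p x)\psi(\theta_x)\,\d\rho_0(x)$. On the other hand, because $\int_M\log_\p x\,\rho_k(x)\,\d x=0$ and $|\log_\p x|=\theta_x$, the norm of the left-hand side is at most $\int_{\theta_x>R_\e}\theta_x\,\rho_k(x)\,\d x\le\e$, so the norm of $\int_M(\log_\p x)\psi(\theta_x)\,\d\rho_0(x)$ is at most $\e$; adding the tail bound for $\rho_0$ yields $|\int_M\log_\p x\,\d\rho_0(x)|\le2\e$, and letting $\e\to0$ gives $\int_M\log_\p x\,\d\rho_0(x)=0$.

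Combining the two steps, $\rho_0$ lies in $\overline{B_R(\delta_\p)}$, has finite first moment, and has centre of mass at $\p$. To obtain moreover that $\rho_0\in\calP_{ac}(M)$, one invokes the uniform integrability of $\{\rho_k\}$ -- available in the settings where this lemma is applied, via the entropy bound and Lemma~\ref{lemma:unif-int} -- together with the Dunford--Pettis theorem, which upgrades the weak convergence of measures to weak convergence in $L^1(\d x)$. I expect the main obstacle to be the second step: passing to the limit in the unbounded, vector-valued map $\log_\p x$ against mere weak convergence of measures. It is exactly the uniform tail control provided by the superlinear-moment hypothesis \eqref{eqn:hc} that makes this possible, in line with Remark~\ref{eqn:superlinear}; this also explains why, on homogeneous manifolds, where the centre of mass need not be preserved, only linear growth of the potential is needed.
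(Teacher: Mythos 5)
Your proof is correct and follows essentially the same route as the paper's: tightness of $\overline{B_R(\delta_\p)}$ plus Prokhorov's theorem for the subsequence, then a truncation argument exploiting the uniform bound on $\int_M\hc(\theta_x)\rho_k(x)\,\d x$ to pass to the limit in $\int_M\log_\p x\,\rho_k(x)\,\d x=0$ (you use a continuous cutoff where the paper uses the indicator of a ball, which is if anything slightly cleaner). Your closing observation that absolute continuity of $\rho_0$ --- needed for membership in $\calP_\p(M)$ as defined --- does not follow from the stated hypotheses alone and must be supplied by uniform integrability in the settings where the lemma is applied is a legitimate point that the paper's own proof passes over in silence.
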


\begin{proof}
Consider the open ball $B_R(\delta_\p)$ in $(\calP_1(M), \calW_1)$ centred at $\delta_\p$ and of radius $R>0$. Note that $B_R(\delta_\p)$ (and its closure in $\calW_1$) are tight. Indeed, for any $\rho \in B_R(\delta_\p)$,
\[
\int_M \theta_x\, \d\rho(x) = \calW_1(\rho,\delta_\p) < R,
\]
and on the other hand,
\[
\int_M \theta_x\, \d\rho(x)\geq \int_{\theta_x > L}\theta_x\, \d\rho(x)\geq L\int_{\theta_x >L}\d\rho(x),
\]
for any $L>0$. By combining the inequalities above we then get
\[
\int_{\theta_x >L}\d\rho(x) < \frac{R}{L}, \qquad \text{ for all } L>0.
\]
Hence, for any $\epsilon>0$, set $L=\frac{R}{\epsilon}$ and find
\[
\int_{\theta_x > \frac{R}{\epsilon}}\d\rho(x) <\epsilon,
\]
showing tightness of $B_R(\delta_0)$.

Since $\overline{B_R(\delta_\p)}$ is a tight and closed set in $\mathcal{P}_1(M)$, by Prokhorov's theorem, the sequence $\{\rho_k\}_{k \in\mathbb{N}}$ has a subsequence that converges weakly to $\rho_0\in\overline{B_R(\delta_\p)}$. It remains to show that $\rho_0\in\mathcal{P}_\p(M)$, i.e., $\rho_0$ has centre of mass at $\p$.

Fix an arbitrary unit tangent vector $v\in T_\p M$. As $\rho_0\in\overline{B_R(\delta_\p)}$, we have
\[
\left| \int_{M} \rho_0(x) \log_\p x\cdot v \, \d x \right|  \leq \|v\| \int_{M}\rho_0(x) \theta_x \d x\leq R.
\]
Fix $\epsilon>0$ arbitrary small. By the convergence of integral $\displaystyle \int_{M} \rho_0(x) \log_\p x\cdot v \, \d x$ (see above),  there exists $r_1>0$ such that
\begin{equation}
\label{eqn:iint-rho0-log}
\left |\int_{\theta_x> r}\rho_0(x)\log_\p x\cdot v \, \d x \right| < \epsilon, \qquad \text{ for any } r>r_1.
\end{equation}

On the other hand, from the assumption \eqref{eqn:hc} on $\hc$, there exists $r_2>0$ such that
\[
0< \frac{\theta}{\hc(\theta)} <\epsilon, \qquad \text{ for any } r>r_2. 
\]
Then, for any $r>r_2$, 
\begin{equation}
\label{eqn:iint-rhok-log}
\begin{aligned}
\left|\int_{\theta_x> r}\rho_k(x)\log_\p x\cdot v \, \d x\right|&\leq \int_{\theta_x> r }\rho_k(x)\theta_x \d x \\
& = \int_{\theta_x> r}\rho_k(x)\hc(\theta_x)\left(\frac{\theta_x}{\hc(\theta_x)}\right) \d x\\[2pt]
&\leq \left\|\frac{\theta}{\hc(\theta)}\right\|_{L^\infty((r, \infty))}\times \int_M \rho_k(x)\hc(\theta_x)\d x \\[2pt]
& \leq U \epsilon,
\end{aligned}
\end{equation}
where $U$ denotes a uniform upper bound of $\int_M \hc(\theta_x) \rho_k(x) \d x$. 

By combining \eqref{eqn:iint-rho0-log} and \eqref{eqn:iint-rhok-log}, for any $r>\max(r_1, r_2)$, we get
\begin{equation}
\label{eqn:iint-CMdiff}
\begin{aligned}
&\left|\int_{M}\rho_0(x)\log_\p x\cdot v\d x-\int_{M}\rho_k(x)\log_\p x\cdot v \d x\right|\\
& \qquad \leq \left|\int_{\theta_x\leq r }\rho_0(x)\log_\p x\cdot v\d x-\int_{\theta_x\leq r }\rho_k(x)\log_\p x\cdot v \d x\right|\\[2pt]
& \qquad \quad +\left|\int_{\theta_x> r}\rho_0(x)\log_\p x\cdot v\d x-\int_{\theta_x> r }\rho_k(x)\log_\p x\cdot v \d x\right|\\[2pt]
& \qquad \leq \left|\int_{\theta_x\leq r }\rho_0(x)\log_\p x\cdot v\d x-\int_{\theta_x\leq r }\rho_k(x)\log_\p x\cdot v \d x\right|+(U+1)\epsilon.
\end{aligned}
\end{equation}
Since $\{x:\theta_x\leq r\}$ is a bounded set, by continuity of the Riemannian logarithm we have
\[
\int_{\theta_x\leq r }\rho_0(x)\log_\p x\cdot v \, \d x=\lim_{k\to \infty} \int_{\theta_x\leq r }\rho_k(x)\log_\p x\cdot v \, \d x.
\]

Finally, letting $k\to\infty$ in \eqref{eqn:iint-CMdiff}, we find
\[
\limsup_{k\to\infty}\left|\int_{M}\rho_0(x)\log_\p x\cdot v\d x-\int_{M}\rho_k(x)\log_\p x\cdot v \d x\right|\leq (U+1)\epsilon,
\]
for any $\epsilon>0$. From here we infer that
\[
\lim_{k\to\infty}\left|\int_{M}\rho_0(x)\log_\p x\cdot v\d x-\int_{M}\rho_k(x)\log_\p x\cdot v \d x\right|=0,
\]
and hence,
\[
\lim_{k\to\infty}\int_{M}\rho_k(x)\log_\p x\cdot v \d x=\int_{M}\rho_0(x)\log_\p x\cdot v\d x.
\]

Since $\rho_k\in\mathcal{P}_\p(M)$, we have $\int_M\rho_k(x)\log_\p x \cdot v \, \d x=0$ for all $k \geq 1$, and consequently, 
\[
\int_M\rho_0(x)\log_\p x \cdot v \, \d x =0.
\]
Since the constant unit vector $v$ is arbitrary, we infer the conservation of the centre of mass property.
\end{proof}

We now present the proof of Theorem \ref{thm:exist-gen}, one of the main results of this research.
\medskip

{\em Proof of Theorem \ref{thm:exist-gen}.} We will present the proof in several steps.

{\em \underline{Step 1.}} Choose a probability density function $\tilde{\rho}\in \mathcal{P}_\p(M)$ which satisfies $E[\tilde{\rho}]<\infty$. For example, we can construct
\[
\tilde{\rho}=\exp_\p(\cdot)_{\#} \left(\frac{1}{w(\dm)}\mathbf{1}_{B_1(0)}\right),
\]
where $\frac{1}{w(\dm)}\mathbf{1}_{B_1(0)}$ is the uniform distribution on the unit ball in $T_\p M$ centered at the zero tangent vector. From the boundedness of the support of this distribution, we can easily check the boundedness of the energy. If $\tilde{\rho}$ is a global minimizer of $E[\cdot]$ on $\mathcal{P}_\p(M)$ then the proof is done. So we assume $\tilde{\rho}$ is not a global minimizer. i.e., 
\[
E[\tilde{\rho}]>\inf_{\rho\in \mathcal{P}_{\p}(M)}E[\rho].
\]

Consider the constants $C_0$, $C$ and $\epsilon$ from Proposition \ref{prop:E-W1}. Since $E[\tilde{\rho}] < \infty$, there exists $R>0$ such that
\begin{equation*}
E[\tilde{\rho}]\leq-C_0\dm+\frac{C}{2}+\left(\frac{(2-\sqrt{2})\epsilon}{2}\right) R.
\end{equation*}
Then, using Proposition \ref{prop:E-W1}, for any $\rho\in  \mathcal{P}_\p(M)\setminus \overline{B_{R}(\delta_\p)}$ it holds that
\begin{equation}
\label{eqn:Erho-geq}
\begin{aligned}
%\label{eqn:E-est-BRc}
E[\rho]& \geq -C_0\dm+\frac{C}{2}+\frac{(2-\sqrt{2})\epsilon}{2}\calW_1(\rho, \delta_\p) \\
& >  -C_0 \dm+\frac{C}{2}+\left(\frac{(2-\sqrt{2})\epsilon}{2}\right)R \\
& \geq E[\tilde{\rho}].
\end{aligned}
\end{equation}

From \eqref{eqn:Erho-geq}, we infer
\[
 \inf_{\rho\in \mathcal{P}_\p(M)\backslash  \overline{B_{R}(\delta_\p)}}E[\rho]\geq E[\tilde{\rho}]> \inf_{\rho\in \mathcal{P}_\p(M)}E[\rho],
\]
which in turn implies
\[
E_0:=\inf_{\rho\in \mathcal{P}_\p(M)}E[\rho] = \inf_{\rho\in \overline{B_{R}(\delta_\p)}\cap\mathcal{P}_\p(M)}E[\rho].
\]
\smallskip

{\em \underline{Step 2.}}
Take a minimizing sequence $\{\rho_k\}_{k\in\mathbb{N}}$ of $E[\rho]$ on $\overline{B_R(\delta_\p)}\cap \mathcal{P}_\p(M)$, i.e.,
\[
\{\rho_k\}\subset \overline{B_R(\delta_\p)}\cap \mathcal{P}_\p(M), \qquad \lim_{k\to\infty}E[\rho_k]=E_0.
\]
Without loss of generality, we can assume that $E[\rho_k]$ is non-increasing. In particular, $E[\rho_k]$ is uniformly bounded from above. Hence Lemma \ref{lem:hc-rho} applies, and furthermore, by Lemma \ref{lemma:CM}, there exists a subsequence $\{\rho_{k_l}\}_{l\in\mathbb{N}}$ of $\{\rho_k\}_{k\in\mathbb{N}}$ which converges weakly as measures to $\rho_0\in \overline{B_R(\delta_\p)} \cap \calP_\p(M)$ as $l \to \infty$. From Proposition \ref{prop:lsc}, we also have
\begin{equation}
\label{eqn:limErho-kl}
\lim_{l\to\infty}E[\rho_{k_l}]\geq E[\rho_0].
\end{equation}

Using that $\rho_{k_l}$ is a minimizing sequence, along with \eqref{eqn:limErho-kl}, we then find
\[
E_0=\lim_{l\to\infty}E[\rho_{k_l}]\geq E[\rho_0] \geq \inf_{\rho\in \mathcal{P}_\p(M)}E[\rho]=E_0.
\]
Finally, we infer $E[\rho_0]= E_0$, and conclude that $\rho_0$ is a global energy minimizer of the energy in $\mathcal{P}_\p(M)$. 

\hspace {14cm} $\qed$

%%%%%%%%%%

\section{Existence of a global minimizer for homogeneous manifolds}
\label{sec:existence}
In this section we present the proof of Theorem \ref{them:existence}, which establishes the existence of a global minimizer of the energy functional on homogeneous Cartan-Hadamard manifolds.  The overall strategy will be similar to that of the proof of Theorem \ref{thm:exist-gen}. 

\begin{lemma}\label{Lem6.2h}
Let $h$ be a lower semi-continuous function which satisfies \eqref{eqn:assumptions-h}.
%\begin{equation}
%\label{eqn:assumptions-h}
%\begin{aligned}
%&\liminf_{\theta\to 0+}\left(h(\theta)- A_1 \log\theta\right)>-\infty,\\[2pt]
%&\liminf_{\theta\to\infty}\left(h(\theta)-A_2 \sqrt{c_m} \theta\right)>-\infty,
%\end{aligned}
%\end{equation}
%for some $A_1<2\dm$ and $A_2>2(\dm-1)$. 
Then, there exists $\epsilon >0$ and a constant $C$ such that
\[
h(\theta)-2 \dm \log\theta-2(\dm-1)  \log\left ( \frac{\sinh(\sqrt{c_m}\theta)}{\sqrt{c_m}\theta} \right ) -\epsilon\theta \geq C, \qquad \text{ for all } \theta\in(0, \infty).
\]
\end{lemma}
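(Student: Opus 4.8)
The plan is to follow almost verbatim the proof of Lemma~\ref{Lem6.2}, decomposing $(0,\infty)$ into a neighbourhood of the origin, a neighbourhood of infinity, and a compact middle interval, and bounding
\[
F(\theta):=h(\theta)-2\dm\log\theta-2(\dm-1)\log\!\left(\frac{\sinh(\sqrt{c_m}\theta)}{\sqrt{c_m}\theta}\right)-\epsilon\theta
\]
from below on each piece. The only place where the argument genuinely differs from Lemma~\ref{Lem6.2} is the behaviour at infinity, where we now have merely \emph{linear} — rather than superlinear — control on $h$; this is precisely where the hypothesis $A_2>2(\dm-1)$ is used.

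For the behaviour near $0$, I would argue exactly as in Lemma~\ref{Lem6.2}: for $0<\theta<1$ we have $\log\theta<0$, and since $A_1<2\dm$ this gives $-2\dm\log\theta\ge -A_1\log\theta$, hence $h(\theta)-2\dm\log\theta\ge h(\theta)-A_1\log\theta$ near the origin. Combining this with the first assumption in~\eqref{eqn:assumptions-h}, together with $\log(\sinh(\sqrt{c_m}\theta)/(\sqrt{c_m}\theta))\to0$ and $\epsilon\theta\to0$ as $\theta\to0^+$, I conclude that $C_1:=\liminf_{\theta\to0^+}F(\theta)>-\infty$ for any $\epsilon>0$.

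For the behaviour at infinity I would use the estimate~\eqref{eqn:est-sinh}, i.e. $\log(\sinh(\sqrt{c_m}\theta)/(\sqrt{c_m}\theta))\le\sqrt{c_m}\theta$, so that
\[
F(\theta)\ \ge\ \bigl(h(\theta)-A_2\sqrt{c_m}\,\theta\bigr)+\bigl(A_2-2(\dm-1)\bigr)\sqrt{c_m}\,\theta-\epsilon\theta-2\dm\log\theta .
\]
The first bracket is bounded below by the second assumption in~\eqref{eqn:assumptions-h}. Since $A_2>2(\dm-1)$, the coefficient $(A_2-2(\dm-1))\sqrt{c_m}$ is strictly positive, so I may fix $\epsilon\in\bigl(0,(A_2-2(\dm-1))\sqrt{c_m}\bigr)$; then the surviving term $\bigl((A_2-2(\dm-1))\sqrt{c_m}-\epsilon\bigr)\theta$ is linear with positive coefficient and dominates $2\dm\log\theta$, so $C_2:=\liminf_{\theta\to\infty}F(\theta)>-\infty$ (in fact it equals $+\infty$, but any finite lower bound suffices below).

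Finally, exactly as at the end of the proof of Lemma~\ref{Lem6.2}: from the definition of limit inferior there are radii $0<a_1<a_2$ with $F(\theta)>C_1-1$ on $(0,a_1)$ and $F(\theta)>C_2-1$ on $(a_2,\infty)$; on $[a_1,a_2]$ the map $\theta\mapsto-2\dm\log\theta-2(\dm-1)\log(\sinh(\sqrt{c_m}\theta)/(\sqrt{c_m}\theta))-\epsilon\theta$ is continuous and $h$ is lower semi-continuous (and finite on $(0,\infty)$, by the hypothesis at $0$), so $F$ is lower semi-continuous on the compact set $[a_1,a_2]$ and attains a finite minimum $C_3$ there. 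Setting $C:=\min(C_1-1,C_2-1,C_3)$ yields $F(\theta)\ge C$ for all $\theta\in(0,\infty)$, which is the claim. There is essentially no obstacle; the one point that must be handled with care is the choice of $\epsilon$, which relies on the \emph{strict} inequality $A_2>2(\dm-1)$ so that a positive linear coefficient remains at infinity after subtracting both $2(\dm-1)\sqrt{c_m}\,\theta$ and $\epsilon\theta$.
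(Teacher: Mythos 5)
Your proof is correct and follows essentially the same three-piece decomposition as the paper's proof of Lemma \ref{Lem6.2h}: the estimate \eqref{eqn:est-sinh} at infinity, the comparison $-2\dm\log\theta\ge -A_1\log\theta$ near the origin, lower semi-continuity on the compact middle interval, and $C:=\min(C_1-1,C_2-1,C_3)$. The one place you deviate is the choice of $\epsilon$, and your version is actually the more careful one: the paper sets $\epsilon=(A_2-2(\dm-1))\sqrt{c_m}$ exactly, which reduces the expression at infinity to $h(\theta)-A_2\sqrt{c_m}\,\theta-2\dm\log\theta$; since the hypothesis only bounds $h(\theta)-A_2\sqrt{c_m}\,\theta$ from below while $-2\dm\log\theta\to-\infty$, the claim $C_2>-\infty$ does not follow as literally written (take $h(\theta)=A_2\sqrt{c_m}\,\theta$). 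Your strict choice $\epsilon<(A_2-2(\dm-1))\sqrt{c_m}$ leaves a positive linear term that dominates the logarithm and closes this gap (equivalently, one could first shrink $A_2$ slightly, which the hypothesis permits). So: same route, with a small but genuine repair at the infinity step.
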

\begin{proof}
For the behaviour at infinity, use \eqref{eqn:est-sinh}. Set
\begin{equation}
\label{eqn:epsilon}
%\epsilon=\frac{2}{3} (A_2- 2(\dm -1))\sqrt{c_m} >0.
\epsilon=(A_2- 2(\dm -1))\sqrt{c_m}>0.
\end{equation}
Then, by \eqref{eqn:est-sinh}, we get
\begin{align*}
h(\theta)-2 \dm \log\theta-2(\dm-1)  \log\left ( \frac{\sinh(\sqrt{c_m}\theta)}{\sqrt{c_m}\theta} \right )  -\epsilon\theta &\geq h(\theta)-2 \dm \log\theta-2(\dm-1) \sqrt{c_m} \theta  -\epsilon\theta  \\
&= h(\theta)-A_2 \sqrt{c_m} \theta - 2 \dm \log\theta.
\end{align*}
By the second assumption on $h$ in \eqref{eqn:assumptions-h}, this implies that we can define
\begin{equation}
\label{eqn:C2}
C_2: = \liminf_{\theta\to \infty}\left(h(\theta)-2\dm\log\theta-2(\dm-1)  \log\left ( \frac{\sinh(\sqrt{c_m}\theta)}{\sqrt{c_m}\theta} \right )  -\epsilon\theta\right)>-\infty.
\end{equation}

For the behaviour near $0$, note that 
\[
-2\dm\log\theta > -A_1 \log\theta, \qquad \text{ for } 0<\theta<1.
\]
Hence, also using that $ \lim_{\theta\to0+} \log\left ( \frac{\sinh(\sqrt{c_m}\theta)}{\sqrt{c_m}\theta} \right ) =0$ and the first assumption on $h$ in \eqref{eqn:assumptions-h}, we can define 
\begin{equation}
\label{eqn:C1}
\begin{aligned}
%\begin{cases}
C_1:=\liminf_{\theta\to 0+}\left(h(\theta)-2\dm\log\theta-2(\dm-1) \log\left ( \frac{\sinh(\sqrt{c_m}\theta)}{\sqrt{c_m}\theta} \right ) -\epsilon\theta\right) > -\infty.
%C_2&=\liminf_{\theta\to \infty}\left(h(\theta)-2\dm\log\theta-2(\dm-1) \log\left ( \frac{\sinh(\sqrt{c_m}\theta)}{\sqrt{c_m}\theta} \right ) -\epsilon\theta\right).
%\end{cases}
\end{aligned}
\end{equation}

From the definition of limit inferior, there exist $0<A_1<A_2$ such that
\begin{equation}
\label{eqn:A1A2}
\begin{aligned}
%\begin{cases}
C_1-1&< h(\theta)-2\dm\log\theta-2(\dm-1) \log\left ( \frac{\sinh(\sqrt{c_m}\theta)}{\sqrt{c_m}\theta} \right ) -\epsilon\theta, \qquad\forall \theta\in (0, A_1),\\[2pt]
C_2-1&<h(\theta)-2\dm\log\theta-2(\dm-1)  \log\left ( \frac{\sinh(\sqrt{c_m}\theta)}{\sqrt{c_m}\theta} \right ) -\epsilon\theta, \qquad\forall \theta\in(A_2, \infty).
%\end{cases}
\end{aligned}
\end{equation}
Furthermore, use that $h$ is a lower semi-continuous function to get
\[
C_3\leq h(\theta) - 2\dm\log\theta-2(\dm-1) \log\left ( \frac{\sinh(\sqrt{c_m}\theta)}{\sqrt{c_m}\theta} \right ) -\epsilon\theta, \qquad\forall \theta\in[A_1, A_2].
\]
Finally, define 
\begin{equation}
\label{eqn:Ch}
C:=\min(C_1-1, C_2-1, C_3),
\end{equation}
to get the desired result.
\end{proof}

\begin{remark}
\label{rmk:transfer}
\normalfont
There are certain results from Section \ref{sec:existence-gen} that transfer immediately to the context of the current section. First, Lemma \ref{Lem6.1} does not even contain information about $h$, it is a general fact about densities on $M$. Second, having established Lemma \ref{Lem6.2h}, Proposition  \ref{prop:E-W1} can be proved in exactly the same way.  Finally, the proof of Lemma \ref{l33} follows identically as well, if $h$ satisfies the weaker condition \eqref{eqn:assumptions-h}. So we will adopt both Proposition  \ref{prop:E-W1} and Lemma \ref{l33}, with the convention that referring to one of these results in the current section we mean that in their assumptions, $h$ satisfies \eqref{eqn:assumptions-h} instead of \eqref{eqn:h-hyp}. 
\end{remark}

Next, we state and prove the lower semi-continuity of the energy along minimizing sequences.

\begin{proposition}[Lower semi-continuity of the energy]
\label{prop:lsc-homog}
Assume $M$ and $h$ satisfy the assumptions in Theorem \ref{them:existence}. Fix a pole $\p \in M$, and suppose $\{\rho_k\}_{k\in\mathbb{N}} \subset \calP_\p(M)$ is such that $E[\rho_k]$ is uniformly bounded from above, and $\rho_k \rightharpoonup \rho_0$ weakly (as measures) as $k \to \infty$. Then we have
\[
\liminf_{k\to\infty}E[\rho_k]\geq E[\rho_0].
\]
\end{proposition}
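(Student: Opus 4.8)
The plan is to follow the scheme of Proposition~\ref{prop:lsc}, but one genuinely new point must be addressed. Under the merely linear growth hypothesis \eqref{eqn:assumptions-h} the entropy and the interaction energy are \emph{not} separately lower semicontinuous along $\{\rho_k\}$: a fraction of the mass may escape to a large distance $\sim D$ from $\p$ while decreasing the entropy by an amount of order $(\dm-1)\sqrt{c_m}\,D$ per unit of escaping mass, a loss no longer ruled out by the uniform bound on $\int_M\theta_x\rho_k\,\d x$ alone, so that the analogue of Lemma~\ref{lem:Jint-cont} fails here. The key observation is that this entropic loss is exactly offset by the gain $\log|J(f^{-1})(f(x))|\sim(\dm-1)\sqrt{c_m}\,D$ in the Jacobian of $\exp_\p$, while the interaction with the bulk costs $h(\dist)\gtrsim A_2\sqrt{c_m}\,D\gg(\dm-1)\sqrt{c_m}\,D$ --- this is exactly where \eqref{eqn:assumptions-h} with $A_2>2(\dm-1)$ enters. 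Accordingly, I would regroup the energy so that these far-field terms combine, rather than estimating the entropy and the interaction energy one by one as in the superlinear case.

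By Remark~\ref{rmk:transfer}, Proposition~\ref{prop:E-W1} and Lemma~\ref{l33} hold under \eqref{eqn:assumptions-h}; since $E[\rho_k]$ is uniformly bounded above, it follows that $\int_M\theta_x\rho_k(x)\,\d x=\calW_1(\rho_k,\delta_\p)$ and $\int_M\rho_k(x)\log\rho_k(x)\,\d x$ are uniformly bounded above, hence (Lemma~\ref{lemma:unif-int}) $\{\rho_k\}_{k\in\mathbb{N}}$ is uniformly integrable on $M$. Let $f$ be the Riemannian logarithm at $\p$. Using \eqref{eqn:HLS-deriv3} and that each $\rho_k$ has unit mass, I would write
\[
E[\rho_k]=\int_{T_\p M}f_\#\rho_k(u)\log f_\#\rho_k(u)\,\d u\;+\;\frac12\iint_{M\times M}\Lambda(x,y)\,\rho_k(x)\rho_k(y)\,\d x\,\d y,
\]
where $\Lambda(x,y):=h(\dist(x,y))-\log|J(f^{-1})(f(x))|-\log|J(f^{-1})(f(y))|$. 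Since $f$ is a homeomorphism, $\rho_k\rightharpoonup\rho_0$ gives $f_\#\rho_k\rightharpoonup f_\#\rho_0$, and the lower semicontinuity of the entropy on $\bbr^\dm$ \cite{CaDePa2019} yields $\liminf_k\int_{T_\p M}f_\#\rho_k\log f_\#\rho_k\,\d u\ge\int_{T_\p M}f_\#\rho_0\log f_\#\rho_0\,\d u$. Both sequences in the decomposition are bounded below --- the first by an elementary Gibbs-type estimate using the uniform first-moment bound, the second via the integrated logarithmic HLS inequality \eqref{eqn:ineq-key}, Lemma~\ref{Lem6.2h} and $\log\tfrac{\sinh\theta}{\theta}\le\theta$ --- so that $\liminf(a_k+b_k)\ge\liminf a_k+\liminf b_k$ applies, and the proof reduces to showing $\liminf_k\iint_{M\times M}\Lambda\,\rho_k\rho_k\ge\iint_{M\times M}\Lambda\,\rho_0\rho_0$.

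To establish the latter, note that $\Lambda$ is lower semicontinuous ($h$ is l.s.c.\ and $\log|J(f^{-1})\circ f|$ is continuous) but \emph{not} bounded below: near the diagonal $h(\dist(x,y))\to-\infty$ is possible, and when $x,y$ are both far from $\p$ yet close to one another, $\Lambda(x,y)\approx h(\dist(x,y))-2(\dm-1)\sqrt{c_m}\,\theta_x\to-\infty$. I would split $M\times M$ into $\{\dist(x,y)<\delta\}$, $\{\dist(x,y)\ge\delta,\ \theta_x\le L\text{ and }\theta_y\le L\}$, and $\{\dist(x,y)\ge\delta,\ \theta_x>L\text{ or }\theta_y>L\}$. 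On the first region one repeats verbatim the ``behaviour near $0$'' estimate of Proposition~\ref{prop:lsc}: the $h$-part is dominated by $2\dm|\log\dist(x,y)|$ and, via $ab\le e^a+b\log b-b$ \cite{BlCaCa2008} together with the uniform integrability and the uniform entropy bound, is made smaller than $\epsilon$ uniformly in $k$ for $\delta$ small, while the $\log|J|$-part is bounded there by $(\dm-1)\sqrt{c_m}(\theta_x+\theta_y)\le(\dm-1)\sqrt{c_m}\bigl(\dist(x,y)+2\min(\theta_x,\theta_y)\bigr)$, hence also controlled after using the first-moment bound (the dangerous sub-part requiring \emph{both} points far from $\p$). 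On the second, bounded region $\Lambda$ is continuous and $\rho_k\otimes\rho_k\rightharpoonup\rho_0\otimes\rho_0$ weakly (\cite[Lemma 2.3]{BlCaCa2008}), so one passes to the limit directly, with a finiteness argument of the type of Claims~1--2 of Proposition~\ref{prop:lsc} ensuring the limit object is integrable. On the third region one shows that $|\Lambda|\,\rho_k\otimes\rho_k$ carries vanishingly small mass as $L\to\infty$, uniformly in $k$: here $\rho_k(\{\theta_x>L\})\le M/L$ from the first-moment bound, $\iint_{\dist(x,y)>L}\rho_k\rho_k$ and $\iint h(\dist)\rho_k\rho_k$ are controlled by the energy bound, and the only sub-region where $\Lambda$ is large negative (both points far \emph{and} close) is forced, precisely by $h(\dist)\gtrsim A_2\sqrt{c_m}\dist$ exceeding the Jacobian gain $2(\dm-1)\sqrt{c_m}\dist$, to carry energy-bounded $\rho_k\otimes\rho_k$-mass weighted by distance. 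Combining the three estimates and letting $\delta\to0$, $L\to\infty$, $\epsilon\to0$ yields the claim.

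The main obstacle is precisely this last step. Unlike the superlinear case of Theorem~\ref{thm:exist-gen}, where the convex function $\phi$ makes the first moments of $\{\rho_k\}$ uniformly integrable and each part of the energy is separately lower semicontinuous, here one must quantify the exchange between the entropy (through the Jacobian of $\exp_\p$) and the interaction energy along escaping mass, and show that the hypothesis $A_2>2(\dm-1)$ makes the net contribution of any escaping mass nonnegative in the limit. Once this is done, the remaining arguments are essentially those of Proposition~\ref{prop:lsc}.
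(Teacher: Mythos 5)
Your high-level diagnosis is exactly right: under the linear growth hypothesis \eqref{eqn:assumptions-h} the entropy and the interaction energy are not separately lower semicontinuous, the analogue of Lemma \ref{lem:Jint-cont} fails, and one must regroup the energy so that the far-field entropy loss through $\log|J(f^{-1})\circ f|$ is offset. Your algebraic identity $E[\rho_k]=\int f_\#\rho_k\log f_\#\rho_k+\tfrac12\iint\Lambda\,\rho_k\rho_k$ with $\Lambda=h(\dist(x,y))-\log|J(f^{-1})(f(x))|-\log|J(f^{-1})(f(y))|$ is correct, and Part 1 (lsc of the Euclidean entropy) is fine. The gap is in your treatment of the third region $\{\dist(x,y)\geq\delta,\ \theta_x>L\text{ or }\theta_y>L\}$. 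First, the claimed compensation mechanism --- ``$h(\dist)\gtrsim A_2\sqrt{c_m}\dist$ exceeding the Jacobian gain $2(\dm-1)\sqrt{c_m}\dist$'' --- is a pointwise comparison in $\dist(x,y)$, but on the dangerous sub-region (both points far from $\p$ yet mutually close) the Jacobian loss is of order $(\dm-1)\sqrt{c_m}(\theta_x+\theta_y)$ while $\dist(x,y)$ stays bounded, so $h(\dist(x,y))$ gives no control there; $\Lambda$ is genuinely unbounded below on that set and no pointwise bound closes the estimate. Second, the claim that $|\Lambda|\,\rho_k\otimes\rho_k$ carries vanishingly small mass on this region uniformly in $k$ as $L\to\infty$ amounts to $\sup_k\int_{\theta_x>L}\theta_x\rho_k(x)\,\d x\to0$, i.e.\ uniform integrability of the first moments --- which is precisely what the superlinear function $\hc$ supplies in Lemma \ref{lem:hc-rho} and what is \emph{unavailable} under \eqref{eqn:assumptions-h}, as you yourself observe in your opening paragraph. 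So the region-3 step, as written, cannot be completed.

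The missing ingredient is the centre-of-mass condition $\rho_k\in\calP_\p(M)$, which you never invoke in the far-field analysis. The paper's proof (see \eqref{eqn:3-terms}) splits your $\Lambda$ as $\Lambda_1+\Lambda_2$ with $\Lambda_1=h(\dist)-2(\dm-1)\sqrt{c_m}\dist$ and $\Lambda_2=2(\dm-1)\sqrt{c_m}\dist-\log|J(f^{-1})(f(x))|-\log|J(f^{-1})(f(y))|$. The kernel $\Lambda_1$ depends only on $\dist(x,y)$ and tends to $+\infty$ at infinity because $A_2>2(\dm-1)$, so the near-$0$/away-from-$0$ argument of Proposition \ref{prop:lsc} applies verbatim (this is Part 2 / Appendix \ref{appendix:part2}). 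For $\Lambda_2$, the estimates \eqref{eqn:dxy-ineq} and \eqref{eqn:int-cos} give $\int_M\dist(x,y)\rho_k(y)\,\d y\geq\theta_x\geq\frac{1}{(\dm-1)\sqrt{c_m}}\log|J(f^{-1})(f(x))|$ for every $x$, whence the contribution of $\Lambda_2$ over the complement of any $S\times S$ is \emph{nonnegative} --- a one-sided Fatou-type control that requires no smallness of the tails and is exactly what replaces the unavailable uniform integrability. I recommend you keep your decomposition but insert this splitting of $\Lambda$ and the centred-mass inequality; without it the far-field step does not go through.
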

\begin{proof}
By \eqref{eqn:HLS-deriv3}, the entropy component of the energy $E[\rho_k]$ can be written as
\[
\int_M \rho_k(x)\log\rho_k(x)\d x=\int_{T_\p M}f_\#\rho_k(u)\log f_\#\rho_k(u) \d u-\int_M \rho_k(x)\log |J(f^{-1})(f(x))|\d x,
\]
where $f$ denotes the Riemannian logarithm map at $\p$. Using this equation, and adding and subtracting $\sqrt{c_m}(\dm-1)\iint_{M\times M} \dist(x, y)\rho_k(x)\rho_k(y)\d x \d y$ to $E[\rho_k]$, we express the energy functional as
\begin{equation}
\label{eqn:3-terms}
\begin{aligned}
E[\rho_k]&=\int_{T_\p M}f_\#\rho_k(u)\log f_\#\rho_k(u) \d u \\[2pt]
&+\frac{1}{2}\iint_{M\times M}  \left( h(\dist(x, y))-2(\dm-1)\sqrt{c_m}\dist(x, y) \right)\rho_k(x)\rho_k(y)\d x \d y\\
&+\Biggl (\sqrt{c_m}(\dm-1)\iint_{M\times M} \dist(x, y)\rho_k(x)\rho_k(y)\d x \d y-\int_M \rho_k(x)\log |J(f^{-1})(f(x))|\d x\Biggr ).
\end{aligned}
\end{equation}
We will investigate the each term of the right-hand-side of \eqref{eqn:3-terms}.
\medskip

{\em \underline{Part 1.}}  For the first term in the right-hand-side of \eqref{eqn:3-terms} we use the lower semi-continuity of the entropy functional in $\bbr^\dm$ \cite{CaDePa2019}.  Indeed, since by the change of variable formula, $\rho_{k} \rightharpoonup \rho_0$ implies $f_\# \rho_{k} \rightharpoonup f_\# \rho_0$ as $k \to \infty$, this property implies
\begin{equation}
\label{eqn:liminf-part1}
\begin{aligned}
\liminf_{k \to\infty} \int_M \rho_{k}(x)\log\rho_{k}(x)\d x &= \liminf_{k \to \infty} \int_{T_\p M}f_\#\rho_k(u)\log f_\#\rho_k(u) \d u \\[2pt]
& \geq  \int_{T_\p M}f_\#\rho_0(u)\log f_\#\rho_0(u) \d u.
\end{aligned}
\end{equation}
\medskip

{\em \underline{Part 2.}}  We will show
\begin{align}
\label{eqn:liminf-part2}
&\liminf_{k\to\infty}\iint_{M\times M} (h(\dist(x, y))-2(\dm-1)\sqrt{c_m}\dist(x, y))\rho_k(x)\rho_k(y)\d x \d y\\
& \hspace{3cm} \geq\iint_{M\times M} (h(\dist(x, y))-2(\dm-1)\sqrt{c_m}\dist(x, y))\rho_0(x)\rho_0(y)\d x \d y.
\end{align}
The steps to show \eqref{eqn:liminf-part2} are exactly the same as in Part 2 of the proof of Proposition \ref{prop:lsc}, which established the lower semi-continuity of the interaction energy. Since the proof of \eqref{eqn:liminf-part2} is very close conceptually to these prior arguments, we present it in Appendix \ref{appendix:part2}.
\medskip

{\em \underline{Part 3.}}  The goal is to show
\begin{equation}
\label{eqn:liminf-part3}
\begin{aligned}
&\liminf_{k\to\infty}\left(\sqrt{c_m}(\dm-1)\iint_{M\times M} \dist(x, y)\rho_k(x)\rho_k(y)\d x \d y-\int_M \rho_k(x)\log |J(f^{-1})(f(x))|\d x\right)\\
& \hspace{2cm} \geq \sqrt{c_m}(\dm-1)\iint_{M\times M} \dist(x, y)\rho_0(x)\rho_0(y)\d x \d y-\int_M \rho_0(x)\log |J(f^{-1})(f(x))|\d x.
\end{aligned}
\end{equation}

First note that since $E[\rho_k]$ are uniformly bounded above, by Proposition \ref{prop:E-W1}, so is the sequence of first moments $\int_M \theta_x \rho_k(x) \d x$. This fact and the weak convergence $\rho_k \rightharpoonup \rho_0$ imply that the first moment of $\rho_0$ is bounded, i.e.,
\begin{equation}
\label{eqn:rho0-mnt}
\int_M \theta_x\rho_0(x)\d x\leq L,
\end{equation}
for some $L>0$. 
%This statement can be justified as follows. 

From the triangle inequality we get
\begin{align*}
\iint_{M\times M}\dist(x, y)\rho_0(x)\rho_0(y)\d x\d y & \leq \iint_{M\times M}(\theta_x+\theta_y)\rho_0(x)\rho_0(y)\d x\d y \\
& = 2\int_M\theta_x\rho_0(x)\d x,
\end{align*}
and hence, by using \eqref{eqn:rho0-mnt}, 
\[
0 \leq \iint_{M\times M}\dist(x, y)\rho_0(x)\rho_0(y)\d x\d y \leq 2L.
\]
On the other hand, by Theorem \ref{lemma:Chavel-thms} and \eqref{eqn:est-sinh}, we can estimate
\begin{equation}
\label{eqn:logJ-theta}
\log|J(f^{-1})(f(x))|\leq \sqrt{c_m}(\dm-1)\theta_x,
\end{equation}
and hence,
\begin{align*}
\int_M \rho_0(x)\log |J(f^{-1})(f(x))|\d x &\leq \int_M \sqrt{c_m}(\dm-1)\theta_x\rho_0(x)\d x.
\end{align*}
Together with $\log |J(f^{-1})(f(x))| \geq 0$ and  \eqref{eqn:rho0-mnt}, this leads to
\[
0 \leq \int_M \rho_0(x)\log |J(f^{-1})(f(x))|\d x \leq \sqrt{c_m}(\dm-1)L.
\]

Fix $\epsilon>0$ arbitrary small. By the arguments above,
\[
\Biggl | \sqrt{c_m}(\dm-1)\iint_{M\times M}\dist(x, y)\rho_0(x)\rho_0(y)\d x\d y- \int_M \rho_0(x)\log |J(f^{-1})(f(x))|\d x \Biggr |<\infty,
\]
%Since
%\[
%\lim_{R\to\infty}\left(\sqrt{c_m}(\dm-1)\iint_{\overline{B_R(\p)}\times\overline{B_R(\p)}}\dist(x, y)\rho_0(x)\rho_0(y)\d x\d y- \int_{\overline{B_R(\p)}}\rho_0(x)\log |J(f^{-1})(f(x))|\d x\right)<\infty,
%\]
and hence, there exists $\tilde{R}$ such that
\[
\sqrt{c_m}(\dm-1)\iint_{(M\times M)\backslash(\overline{B_R(\p)}\times\overline{B_R(\p)})}\dist(x, y)\rho_0(x)\rho_0(y)\d x\d y- \int_{M\backslash\overline{B_R(\p)}}\rho_0(x)\log |J(f^{-1})(f(x))|\d x<\epsilon
\]
for any $R>\tilde{R}$. Then, by the weak convergence $\rho_k \otimes \rho_k \rightharpoonup \rho_0 \otimes \rho_0$, we find 
\begin{equation}
\label{eqn:est-diff}
\begin{aligned}
&\sqrt{c_m}(\dm-1)\iint_{M\times M}\dist(x, y)\rho_0(x)\rho_0(y)\d x\d y- \int_M \rho_0(x)\log |J(f^{-1})(f(x))|\d x\\
& \quad \leq \sqrt{c_m}(\dm-1)\iint_{\overline{B_R(\p)}\times\overline{B_R(\p)}}\dist(x, y)\rho_0(x)\rho_0(y)\d x\d y- \int_{\overline{B_R(\p)}}\rho_0(x)\log |J(f^{-1})(f(x))|\d x+\epsilon\\
& \quad \leq \lim_{k\to\infty}\left(\sqrt{c_m}(\dm-1)\iint_{\overline{B_R(\p)}\times\overline{B_R(\p)}}\dist(x, y)\rho_k(x)\rho_k(y)\d x\d y- \int_{\overline{B_R(\p)}}\rho_k(x)\log |J(f^{-1})(f(x))|\d x\right)+\epsilon,
\end{aligned}
\end{equation}
for any $R > \tilde{R}$.

From  \eqref{eqn:dxy-ineq} and \eqref{eqn:int-cos}, since $\rho_k\in\mathcal{P}_\p(M)$, we can estimate
\begin{align*}
\int_{M}\dist(x, y)\rho_k(y)\d y&\geq \int_{M}\left(\theta_x^2+\theta_y^2-2\theta_x\theta_y\cos\angle(x\p y)\right)^{1/2}\rho_k(y)\d y\\
&\geq \int_{M}(\theta_x-\theta_y\cos\angle(x\p y))\rho_k(y)\d y\\[2pt]
&=\theta_x,
\end{align*}
for any $x\in M$ fixed . Using this fact, for any measurable set $S\subset M$, we have
\begin{equation}
\begin{aligned}
&\sqrt{c_m}(\dm-1)\iint_{(M\times M)\backslash (S\times S)} \dist(x, y)\rho_k(x)\rho_k(y)\d x \d y-\int_{M\backslash S} \rho_k(x)\log |J(f^{-1})(f(x))|\d x\\
& \quad \geq \sqrt{c_m}(\dm-1)\iint_{ (M\backslash S)\times M} \dist(x, y)\rho_k(x)\rho_k(y)\d x \d y-\int_{M\backslash S} \rho_k(x)\log |J(f^{-1})(f(x))|\d x\\
&\quad \geq \sqrt{c_m}(\dm-1)\int_{M\backslash S} \theta_x\rho_k(x)\d x-\int_{M\backslash S} \rho_k(x)\log |J(f^{-1})(f(x))|\d x\\
&\quad \geq 0,
\end{aligned}
\end{equation}
where for the last inequality we used \eqref{eqn:logJ-theta}.

By the inequality above with $S=\overline{B_R(\p)}$, we infer
\begin{align*}
& \sqrt{c_m}(\dm-1)\iint_{\overline{B_R(\p)}\times \overline{B_R(\p)}} \dist(x, y)\rho_k(x)\rho_k(y)\d x \d y-\int_{\overline{B_R(\p)}} \rho_k(x)\log |J(f^{-1})(f(x))|\d x \\
& \hspace{2cm} \leq \sqrt{c_m}(\dm-1)\iint_{M\times M} \dist(x, y)\rho_k(x)\rho_k(y)\d x \d y-\int_{M} \rho_k(x)\log |J(f^{-1})(f(x))|\d x,
\end{align*}
for all $k \geq 1$. This result allows us to continue estimating \eqref{eqn:est-diff} as
\begin{align*}
&\sqrt{c_m}(\dm-1)\iint_{M\times M}\dist(x, y)\rho_0(x)\rho_0(y)\d x\d y- \int_M \rho_0(x)\log |J(f^{-1})(f(x))|\d x\\
%\leq&\lim_{k\to\infty}\left(\sqrt{c_m}(\dm-1)\iint_{\overline{B_R(\p)}\times\overline{B_R(\p)}}\dist(x, y)\rho_k(x)\rho_k(y)\d x\d y- \int_{\overline{B_R(\p)}}\rho_k(x)\log |J(f^{-1})(f(x))|\d x\right)+\epsilon\\
\leq&\liminf_{k\to\infty}\left(\sqrt{c_m}(\dm-1)\iint_{M\times M}\dist(x, y)\rho_k(x)\rho_k(y)\d x\d y- \int_{M}\rho_k(x)\log |J(f^{-1})(f(x))|\d x\right)+\epsilon.
\end{align*}
Finally, since $\epsilon>0$ is arbitrary, we get the desired result \eqref{eqn:liminf-part3}.
\end{proof}

\begin{remark}
\label{rmk:lsc-decouple}
\normalfont
In Proposition \ref{prop:lsc}, where $h$ satisfies the superlinear growth condition \eqref{eqn:h-hyp}, we showed that each component of the energy, the entropy and the interaction energy, respectively, is lower semi-continuous. In particular, the lower semi-continuity of the entropy was enabled by Lemma \ref{lem:Jint-cont}, that provides the continuity of the term involving the Jacobian of the exponential map. Under the weaker assumption \eqref{eqn:assumptions-h}, we were not able to separate the two components in the proof of Proposition \ref{prop:lsc-homog}. In this case (e.g., see Part 3 of the proof), we had to couple the two components in order to show the lower semi-continuity of the energy.
\end{remark}

Now we present the proof of Theorem \ref{them:existence}. 
\medskip

%We will assume that the curvatures of $M$ satisfy the assumption \eqref{eqn:K-lowerB} in Theorem \ref{HLSub} for arbitrary poles in $M$.
{\em Proof of Theorem \ref{them:existence}.} We will follow the general strategy from the proof of Theorem \ref{thm:exist-gen}. Step 1 in the proof of Theorem \ref{thm:exist-gen} follows identically and one can conclude that
\[
E_0:=\inf_{\rho\in \mathcal{P}_\p(M)}E[\rho] = \inf_{\rho\in \overline{B_{R}(\delta_\p)}\cap\mathcal{P}_\p(M)}E[\rho],
\]
for some $R>0$ fixed.

Take a minimizing sequence $\{\rho_k\}_{k\in\mathbb{N}}$ of $E[\rho]$ in $\overline{B_R(\delta_\p)}\cap \mathcal{P}_\p(M)$, i.e.,
\[
\{\rho_k\}\subset \overline{B_R(\delta_\p)}\cap \mathcal{P}_\p(M), \qquad \lim_{k\to\infty}E[\rho_k]=E_0.
\]
Assume without loss of generality that $E[\rho_k]$ is non-increasing, which implies that $E[\rho_k]$ is uniformly bounded from above. Since $\displaystyle \overline{B_R(\delta_\p)}$ is tight, there exists a subsequence $\{\rho_{k_l}\}_{l\in\mathbb{N}}$ of $\{\rho_k\}_{k\in\mathbb{N}}$ which converges weakly as measures to $\rho_0\in \overline{B_R(\delta_\p)}$ as $l \to \infty$. Also, from Proposition \ref{prop:lsc-homog}, we have
\begin{equation}
\label{eqn:limErho-kl-h}
\lim_{l\to\infty}E[\rho_{k_l}]\geq E[\rho_0].
\end{equation}

Denote by $\p'$ the centre of mass of $\rho_0$. Since the manifold $M$ is homogeneous, there exists an isometry $i:M\to M$ such that $i(\p') = \p$. Define $\bar{\rho}_0$ to be the pushforward (as measures) of $\rho_0$ by $i$, i.e., $\bar{\rho}_0 =i_\#\rho_0$. 
Then the centre of mass of $\bar{\rho}_0$ is $\p$ and hence, $\bar{\rho}_0\in \mathcal{P}_\p(M)$. 

As noted in Section \ref{sect:prelim}, the energy is invariant to isometries, and therefore $E[\rho_0]=E[\bar{\rho}_0]$. Using this fact along with $\rho_{k_l}$ being a minimizing sequence, and \eqref{eqn:limErho-kl-h}, we then find
\[
E_0=\lim_{l\to\infty}E[\rho_{k_l}]\geq E[\rho_0]=E[\bar{\rho}_0]\geq \inf_{\rho\in \mathcal{P}_\p(M)}E[\rho]=E_0.
\]
Finally, we infer $E[\bar{\rho}_0]= E_0$, and conclude that $\bar{\rho}_0$ is a global energy minimizer of the energy in $\mathcal{P}_\p(M)$. 

\hspace {14cm} $\qed$

%%%%%%%%%%

\appendix
\section{Proof of Lemma \ref{Lem6.1}}
\label{appendix:Lem6.1}
Consider the Rauch comparison theorem in the same setup as that used to show \eqref{eqn:dist-ineq} (see paragraph under equation \eqref{eqn:dist-ineq}). Then, also using the Law of Cosines, we have
\begin{align}
\dist(x, y)^2 &\geq |\log_\p x- \log_\p y|^2 \nonumber \\[2pt]
& =  \theta_x^2+\theta_y^2-2\theta_x\theta_y\cos(\angle x\p y), \label{eqn:dxy-ineq}
\end{align}
which further yields
\begin{equation}
\label{eqn:loglog-ineq}
2\log_\p x\cdot\log_\p y=2\theta_x\theta_y\cos(\angle x\p y)\geq\theta_x^2+\theta_y^2-\dist(x, y)^2.
\end{equation}

Using the triangle inequality and some trivial calculations, we get
\begin{align}
\label{N-1-1}
\begin{aligned}
(\theta_x+\theta_y-\dist(x, y))^2-2\theta_x\theta_y &=\theta_x^2+\theta_y^2+\dist(x, y)^2-2\dist(x, y)(\theta_x+\theta_y)  \\
& \leq\theta_x^2+\theta_y^2+\dist(x, y)^2-2\dist(x, y)^2 \\
&= \theta_x^2+\theta_y^2-\dist(x, y)^2.
\end{aligned}
\end{align}
Multiply \eqref{eqn:fixed-cm} by $\log_\p y \rho(y)$ and integrate, and use \eqref{eqn:loglog-ineq} and \eqref{N-1-1}, to get
\begin{align}
\label{eqn:ineq-int1}
\begin{aligned}
0&=\iint_{M\times M} 2 \log_\p x\cdot \log_\p y \rho(x)\rho(y)\d x\d y \\
&\geq \iint_{M \times M} \left((\theta_x+\theta_y-\dist(x, y))^2-2\theta_x\theta_y\right)\rho(x)\rho(y)\d x \d y \\
&=\iint_{M \times M} (\theta_x+\theta_y-\dist(x, y))^2\rho(x)\rho(y)\d x \d y-2\int_M \theta_x\rho(x)\d x\int_M\theta_y\rho(y)\d y.
\end{aligned}
\end{align}
Furthermore, by using the Cauchy-Schwarz inequality
\[
 \iint_{M \times M}(\theta_x+\theta_y-\dist(x, y))\rho(x)\rho(y) \d x \d y \leq \left( \iint_{M \times M} (\theta_x+\theta_y-\dist(x, y))^2\rho(x)\rho(y)\d x \d y \right)^{1/2},
\]
we find from \eqref{eqn:ineq-int1}:
\begin{align}
\label{eqn:ineq-int2}
\begin{aligned}
0 &\geq \left( \iint_{M \times M}(\theta_x+\theta_y-\dist(x, y))\rho(x)\rho(y) \d x \d y\right)^2-2\left(\int_M \theta_x\rho(x)\d x\right)^2\\
&=\left(2\int_M \theta_x\rho(x)\d x- \iint_{M \times M}\dist(x, y)\rho(x)\rho(y)\d x \d y\right)^2-2\left(\int_M \theta_x\rho(x)\d x\right)^2.
\end{aligned}
\end{align}

By the definition of the $1$-Wasserstein distance, write
\[
\calW_1(\rho, \delta_\p)=\int_M \theta_x \rho(x)\d x.
\]
Hence, by \eqref{eqn:ineq-int2}, we get
\begin{align*}
0&\geq \left(2\calW_1(\rho, \delta_\p)-\iint_{M \times M} \dist(x, y)\rho(x)\rho(y)\d x \d y\right)^2-2\calW_1(\rho, \delta_\p)^2\\
&=2\calW_1(\rho, \delta_\p)^2-4\calW_1(\rho, \delta_\p) \iint_{M \times M} \dist(x, y)\rho(x)\rho(y) \d x \d y+\left(\iint_{M \times M} \dist(x, y)\rho(x)\rho(y)\d x \d y\right)^2.
\end{align*}
Therefore, for any $\alpha>0$ and $0<\beta<2$, we have
\begin{align*}
& \alpha\left(\iint_{M \times M} \dist(x, y)\rho(x)\rho(y)\d x \d y\right)^2-\beta \calW_1(\rho, \delta_\p)^2 \\
& \quad \geq(2-\beta)\calW_1(\rho, \delta_\p)^2 -4\calW_1(\rho, \delta_\p) \iint_{M \times M} \dist(x, y)\rho(x)\rho(y)\d x \d y+(1+\alpha)\left(\iint_{M \times M} \dist(x, y)\rho(x)\rho(y)\d x \d y\right)^2.
\end{align*}
To make the right-hand-side a perfect square, take $\alpha$ and $\beta$ that satisfy
\[
(2-\beta)(1+\alpha)=4 \quad\Longleftrightarrow\quad \alpha=\frac{2+\beta}{2-\beta}.
\]
Consequently, it holds that
\[
\left(\frac{2+\beta}{2-\beta}\right)\left(\iint_{M \times M} \dist(x, y)\rho(x)\rho(y)\d x \d y\right)^2-\beta \calW_1(\rho, \delta_\p)^2\geq0,
\]
for any $0<\beta<2$. 

Finally, since $\frac{\beta(2-\beta)}{2+\beta}$ has a maximum at $\beta=2\sqrt{2}-2$, we find
\[
\iint_{M \times M} \dist(x, y)\rho(x)\rho(y)\d x \d y\geq (2-\sqrt{2})\calW_1(\rho, \delta_\p),
\]
which is the first of the desired double inequality.
The second inequality follows immediately from
\[
\iint_{M \times M} \dist(x, y)\rho(x)\rho(y)\d x \d y\leq \iint_{M \times M} (\theta_x+\theta_y)\rho(x)\rho(y) \d x \d y=2\calW_1(\rho, \delta_\p).
\]

%%%%%

\section{Proof of Lemma \ref{lemma:unif-int}}
\label{appendix:unif-int}
Let 
\[
\bar{\rho}_k(x)=\begin{cases}
0\quad&\text{if }\rho_k(x)\geq1,\\[2pt]
\rho_k(x)\quad&\text{if }0\leq \rho_k(x)<1,
\end{cases}
\]
and define
\[
m_k=\int_M \bar{\rho}_k(x)\d x.
\]
Also set
\[
\mu(x)=\frac{\theta_x^{\dm-1} e^{-\theta_x}}{\alpha|\partial B_{\theta_x}(\p)|},
\]
where $\alpha$ is a positive constant which normalizes $\mu$, i.e.,
\begin{align*}
\alpha&
%=\int_M \frac{e^{-\theta_x}}{|\partial B_{\theta_x}(\p)|/\theta_x^{\dm-1}}dx=\int_0^\infty \frac{e^{-\theta_x}}{|\partial B_{\theta_x}(\p)|/\theta_x^{\dm-1}} |\partial B_{\theta_x}(\p)|d\theta_x\\
=\int_0^\infty \theta_x^{\dm-1} e^{-\theta_x}\d\theta_x.
\end{align*}
Hence,
\[
\int_M \d\mu(x) = 1,
\]
where $\d\mu(x)=\mu(x)\d x$, and we can consider $\mu(x)$ as a probability measure. 

Define
\[
U_k(x)=\frac{\bar{\rho}_k(x)}{\mu(x)}.
\]
A simple calculation yields
\[
\int_M U_k(x)\d\mu(x)= \int_M \bar{\rho}_k(x)\d x=m_k,
\]
and by applying Jensen's inequality to the convex function $ x \in [0, \infty) \mapsto x \log x$, we then find
\begin{align*}
m_k\log m_k &= \left(\int_M U_k(x)\d\mu(x)\right)\log\left(\int_M U_k(x)\d\mu(x)\right) \\
& \leq \int_M U_k(x)\log U_k(x)\d\mu(x).
\end{align*}

Since $0\leq m_k\leq 1$, we have $m_k\log m_k\geq -\frac{1}{e}$, which combined with the inequality above it yields
\begin{align}
-\frac{1}{e}&\leq \int_M U_k(x)\log U_k(x) \d\mu(x) \nonumber \\
&=\int_M \bar{\rho}_k(x)(\log\bar{\rho}_k(x)-\log\mu(x))\d x \nonumber \\
&=\int_M\bar{\rho}_k(x)\log\bar{\rho}_k(x)\d x+\int_M \bar{\rho}_k(x)\left(\theta_x+\log\alpha+\log\left(\frac{|\partial B_{\theta_x}(\p)|}{\theta_x^{\dm-1}}\right)\right)\d x.
\label{eqn:ineq-oe-1}
\end{align}
Analogous to the volume comparison result (see Corollary \ref{cor:AV-bounds} part i)), the area comparison theorem provides that
\begin{equation}
\label{ineq:area-comp}
|\partial B_{\theta_x}(\p)| \leq \dm w(\dm)\left( \frac{\sinh(\sqrt{c_m}\theta_x)}{\sqrt{c_m}}\right)^{\dm-1},
\end{equation}
which used in \eqref{eqn:ineq-oe-1} it gives 
\begin{equation*}
\begin{aligned}
-\frac{1}{e}&\leq \int_M\bar{\rho}_k(x)\log\bar{\rho}_k(x)\d x \\
& \quad +\int_M \bar{\rho}_k(x)\left( \theta_x+\log\alpha+\log(\dm w(\dm))+(\dm-1)\log\left( \frac{\sinh(\sqrt{c_m}\theta_x)}{\sqrt{c_m}\theta_x}\right) \right)\d x.
\end{aligned}
\end{equation*}
From the inequality above, and the fact that $\bar{\rho}_k \leq \rho_k$, we then get
\begin{equation}
\label{eqn:ineq-oe-2}
\begin{aligned}
-\int_M \bar{\rho}_k(x)\log\bar{\rho}_k(x)\d x &\leq \frac{1}{e}+\int_M \theta_x \rho_k(x) \d x \\
& \quad +(\dm-1)\int_M  \log\left( \frac{\sinh(\sqrt{c_m}\theta_x)}{\sqrt{c_m}\theta_x}\right) \rho_k(x) \d x+\max(0, \log(\alpha\dm w(\dm))).
\end{aligned}
\end{equation}

Now, using \eqref{eqn:ineq-oe-2}, one can further derive
\begin{align}
&\int_M \rho_k(x)|\log \rho_k(x)|\d x\\
&\quad =-\int_{\{x:\rho_k(x)\leq 1\}}\rho_k(x)\log\rho_k(x)\d x+\int_{\{x:\rho_k(x)> 1\}}\rho_k(x)\log\rho_k(x)\d x\\
&\quad =-2\int_{\{x:\rho_k(x)\leq 1\}}\rho_k(x)\log\rho_k(x)\d x+\int_M\rho_k(x)\log\rho_k(x)\d x\\
&\quad \leq 2\left(
\frac{1}{e}+\int_M\theta_x \rho_k(x)\d x+(\dm-1)\int_M \log\left( \frac{\sinh(\sqrt{c_m}\theta_x)}{\sqrt{c_m}\theta_x}\right) \rho_k(x) \d x+\max(0, \log(\alpha\dm w(\dm)))
\right) \\
&\quad \quad +\int_M\rho_k(x)\log\rho_k(x)\d x.
\end{align}
We note that by \eqref{eqn:est-sinh}, since $\int_M\theta_x \rho_k(x)\d x$ is uniformly bounded above in $k$, so is \\
 $\int_M \log\left( \frac{\sinh(\sqrt{c_m}\theta_x)}{\sqrt{c_m}\theta_x}\right) \rho_k(x) \d x$. Hence, all the three integral terms in the right-hand-side above are uniformly bounded in $k$, which implies that
\[
\int \rho_k(x)|\log\rho_k(x)| \d x
\]
is also uniformly bounded above in $k \in\mathbb{N}$.

For any $Q>1$, it holds that
\[
\int _M \rho_k(x)|\log\rho_k(x)|\d x\geq \int_{\{x:\rho_k(x)\geq Q\}}\rho_k(x)|\log\rho_k(x)|\d x\geq \int_{\{x:\rho_k(x)\geq Q\}}\rho_k(x)\log Q \, \d x,
\]
which implies
\[
\int_{\{x:\rho_k(x)\geq Q\}}\rho_k(x)\d x\leq \frac{1}{\log Q}\int_M \rho_k(x)|\log\rho_k(x)| \d x.
\]
Since $\int_M \rho_k(x)|\log\rho_k(x)|\d x$ is uniformly bounded in $k\in\mathbb{N}$, we have $\sup_{k\in\mathbb{N}}\int \rho_k(x)|\log\rho_k(x)| \d x<\infty$. This yields
\[
0\leq \lim_{Q\to\infty}\sup_{k\in\mathbb{N}}\int_{\{x:\rho_k(x)\geq Q\}}\rho_k(x)\d x\leq\lim_{Q\to\infty} \frac{1}{\log Q}\left(\sup_{k\in\mathbb{N}}\int_M \rho_k(x)|\log\rho_k(x)|\d x\right)=0.
\]
Finally, since $\sup_{k\in\mathbb{N}}\int_{\{x:\rho_k(x)\geq Q\}}\rho_k(x)\d x$ is non-increasing in $Q$, we also have
\[
\inf_{Q\geq0} \sup_{k\in\mathbb{N}} \int_{\{x:\rho_k(x)\geq Q\}}\rho_k(x)\d x=0,
\]
which implies that $\{\rho_k\}_{k\in\mathbb{N}}$ is uniformly integrable. 

%%%%%

\section{Part 2 of proof of Proposition \ref{prop:lsc-homog}}
\label{appendix:part2}
We will follow closely part 2 of the proof of Proposition \ref{prop:lsc}. When the arguments there extend immediately to this context, we will simply mention so.

Define $C$ by
\[
C=\begin{cases}
0&\quad\text{when }\lim_{\theta\to0+}h(\theta)\leq -1,\\[2pt]
-\lim_{\theta\to 0+}h(\theta)-1&\quad\text{otherwise}.
\end{cases}
\]

{\em Behaviour near $0$.} The behaviour near $0$ of $h(\theta) -2(\dm-1)\sqrt{c_m} \theta + C$ is the same as the behaviour at $0$ of $h(\theta) + C$. Also, by the definition of $C$, we have
\begin{equation}
\label{eqn:leq-m1}
\lim_{\theta\to0+}(h(\theta)-2(\dm-1)\sqrt{c_m}\theta+C)\leq -1.
\end{equation}
This allows all the arguments for the behaviour near $0$ of $h(\theta)+C$ that we made in part 2 of the proof of Proposition \ref{prop:lsc}, to apply to $h(\theta) -2(\dm-1)\sqrt{c_m} \theta + C$ as well. Simply replacing $h(\theta)+C$ by $h(\theta) -2(\dm-1)\sqrt{c_m} \theta + C$ in these arguments, we infer that for any $\epsilon>0$ arbitrary fixed, there exists sufficiently small $\delta>0$ such that (see \eqref{hsmalldist}):
\begin{equation}
\label{eqn:iint-delta-h}
\Biggl | \iint_{\dist(x, y)<\delta}\left(h(\dist(x, y))-2(\dm-1) \sqrt{c_m} \dist(x, y)+C\right)\rho_k(x)\rho_k(y)\d x\d y\Biggr |<\epsilon, \qquad \text{for all } k \geq 1.
\end{equation}
Also (see \eqref{eqn:leq-m1}), $\delta$ can be chosen such that $h(\theta)-2(\dm-1)\sqrt{c_m}\theta+C <0$ for $0<\theta<\delta$.
\smallskip

{\em Behaviour away from $0$.}  \underline{Claim 1}:  For $\delta>0$ fixed (cf. \eqref{eqn:iint-delta-h}), 
\[
\iint_{\dist(x, y)\geq\delta} (h(\dist(x, y))-2(\dm-1)\sqrt{c_m}\dist(x, y)+C)\rho_k(x)\rho_k(y)\d x \d y
\]
is uniformly bounded above, with an upper bound independent of $\epsilon$.

To show this, we first note that although $h$ satisfies the weaker condition \eqref{eqn:assumptions-h}, we can prove \eqref{x-4} similarly.  Hence, by  Proposition \ref{prop:E-W1} and Lemma \ref{l33} we infer that $\int_M \theta_x \rho_k(x) \d x$ and $\int_M \rho_k(x) \log\rho_k(x) \d x$ are uniformly bounded. Furthermore, Lemma \ref{lemma:unif-int} applies, and from its proof we get the uniform boundedness of $\int_M \rho_k(x)|\log\rho_k(x)|\d x$. 

From an immediate calculation, we derive
\begin{align*}
&\frac{1}{2}\iint_{M\times M} (h(\dist(x, y))-2(\dm-1)\sqrt{c_m}\dist(x, y))\rho_k(x)\rho_k(y)\d x \d y\\
& \quad = E[\rho_k]-\int_M \rho_k(x)\log\rho_k(x)\d x-\sqrt{c_m}(\dm-1)\iint_{M\times M} \dist(x, y)\rho_k(x)\rho_k(y)\d x \d y\\
& \quad \leq E[\rho_k]+\int_M \rho_k(x)|\log\rho_k(x)|\d x.
\end{align*}
Hence, since $E[\rho_k]$ and $\int\rho_k(x)|\log\rho_k(x)|\d x$ are uniformly bounded above in $k$, so is 
\[
\iint_{M\times M} (h(\dist(x, y))-2(\dm-1)\sqrt{c_m}\dist(x, y))\rho_k(x)\rho_k(y)\d x \d y.
\]
Using this property, the fact that $\iint_{M \times M} C \rho_k(x) \rho_k(y) \d x \d y = C$, and \eqref{eqn:iint-delta-h} (use $\epsilon <1$), we can then infer Claim 1. 

Define $U$ (independent of $\epsilon$) such that 
\[
\iint_{\dist(x, y)\geq \delta} (h(\dist(x, y))-2(\dm-1)\sqrt{c_m}\dist(x, y)+C)\rho_k(x)\rho_k(y)\d x \d y \leq U, \qquad \text{ for all } k \geq 1.
\]
\smallskip

\underline{Claim 2}: With $\delta>0$ fixed as above,
\begin{equation}
\label{eqn:iint-ldelta-h}
\iint_{\dist(x, y) \geq \delta} (h(\dist(x, y))-2(\dm-1)\sqrt{c_m}\dist(x, y) + C)\rho_0(x)\rho_0(y)\d x \d y<\infty.
\end{equation}
This follows from Claim 1 in exactly the same way we showed Claim 2 in part 2 of the proof of Proposition \ref{prop:lsc}. For this purpose, we note that 
\[
\lim_{\theta\to\infty}(h(\theta)-2(\dm-1)\sqrt{c_m}\theta + C) = \infty,
\]
which follows from the behaviour at infinity of $h$ (the second condition in \eqref{eqn:assumptions-h}) by writing
\[
h(\theta)-2(\dm-1)\sqrt{c_m}\theta + C = \h(\theta)-A_2 \sqrt{c_m} \theta + C + (A_2 - 2(\dm-1))\sqrt{c_m}\theta,
\]
where $A_2>2 (\dm -1)$. Consequently, there exists $\tilde{R}$ such that 
\begin{equation}
\label{eqn:expr-pos}
h(\theta)-2(\dm-1)\sqrt{c_m}\theta +C>0, \qquad \text{ for all } \theta\geq \tilde{R}.
\end{equation}
Claim 2 will be shown then by a proof by contradiction argument, exactly as for Claim 2 in part 2 of the proof of Proposition \ref{prop:lsc}. We leave the details to the reader.

Now, from \eqref{eqn:iint-ldelta-h}, there exists $R_0>\tilde{R}$ such that
\[
0 \leq \iint_{\dist(x, y)>R_0}(h(\dist(x, y))-2(\dm-1)\sqrt{c_m}\dist(x, y)+C )\rho_0(x)\rho_0(y)\d x \d y<\epsilon.
\]
Then, also using $h(\theta)-2(\dm-1)\sqrt{c_m}\theta+C <0$ for $0<\theta<\delta$, and the weak convergence $\rho_k \otimes \rho_k \rightharpoonup \rho_0 \otimes \rho_0$, we derive
\begin{align*}
& \iint_{M\times M} (h(\dist(x, y))-2(\dm-1)\sqrt{c_m}\dist(x, y) + C)\rho_0(x)\rho_0(y)\d x \d y\\
& \hspace{1cm} =  \iint_{\dist(x, y)<\delta} (h(\dist(x, y))-2(\dm-1)\sqrt{c_m}\dist(x, y) + C)\rho_0(x)\rho_0(y)\d x \d y \\
& \hspace{1cm} \quad + \iint_{\delta\leq \dist(x, y)\leq R_0} (h(\dist(x, y))-2(\dm-1)\sqrt{c_m}\dist(x, y) + C)\rho_0(x)\rho_0(y)\d x \d y\\
& \hspace{1cm} \quad +\iint_{R_0< \dist(x, y)} (h(\dist(x, y))-2(\dm-1)\sqrt{c_m}\dist(x, y) +C)\rho_0(x)\rho_0(y)\d x \d y\\
& \hspace{1cm} < 0+\lim_{k\to\infty}\iint_{\delta\leq \dist(x, y)\leq R_0} (h(\dist(x, y))-2(\dm-1)\sqrt{c_m}\dist(x, y))\rho_k(x)\rho_k(y)\d x \d y+\epsilon.
\end{align*}

Finally, using \eqref{eqn:iint-delta-h} and \eqref{eqn:expr-pos} (note that $R_0> \tilde{R}$) in the above, we get
\begin{align*}
&\iint_{M\times M} (h(\dist(x, y))-2(\dm-1)\sqrt{c_m}\dist(x, y) + C)\rho_0(x)\rho_0(y)\d x \d y\\
& \hspace{1cm}  \leq  \liminf_{k\to\infty} \Biggl ( \iint_{\dist(x, y)\leq R_0} (h(\dist(x, y))-2(\dm-1)\sqrt{c_m}\dist(x, y) + C)\rho_k(x)\rho_k(y)\d x \d y + \epsilon \Biggr ) + \epsilon \\
& \hspace{1cm}  \leq \liminf_{k\to\infty}\iint_{M \times M} (h(\dist(x, y))-2(\dm-1)\sqrt{c_m}\dist(x, y) +C)\rho_k(x)\rho_k(y)\d x \d y+2\epsilon.
\end{align*}
Since $\epsilon$ is arbitrary, and 
\[
\iint_{M \times M} C\rho_0(x)\rho_0(y)\d x\d y= \iint_{M \times M} C\rho_k(x)\rho_k(y)\d x\d y = C,
\]
one can then reach \eqref{eqn:liminf-part2}, which is the desired result.
\medskip

{\large \bf Data availability}
\medskip

The data that supports the findings of this study is available from the corresponding author upon reasonable request.

\bibliographystyle{abbrv}
\def\url#1{}
\bibliography{lit.bib}

\end{document}